\newfont{\cyr}{wncyr10 scaled 1100}
\theoremstyle{plain}
\newtheorem{theorem}{Theorem}[section]
\newtheorem{propo}[theorem]{Proposition}
\newtheorem{coro}[theorem]{Corollary}
\newtheorem{conj}[theorem]{Conjecture}
\theoremstyle{definition}
\newtheorem{question}[theorem]{Question}
\newtheorem{examplewr}[theorem]{Example}
\theoremstyle{remark}
\newtheorem{obswr}[theorem]{Observation}
\newtheorem{remarkwr}[theorem]{Remark}
\theoremstyle{plain}
\newtheorem*{pro*}{Proposition}
\newtheorem*{rem*}{Remark}
\newenvironment{remark}{\begin{remarkwr}\begin{upshape}}{\end{upshape}\end{remarkwr}}
\DeclareMathOperator{\wt}{wt}
\DeclareMathOperator{\BK}{BK}
\DeclareMathOperator{\id}{Id}
\DeclareMathOperator{\pr}{pr}
\DeclareMathOperator{\cyc}{cyc}
\DeclareMathOperator{\fin}{f}
\DeclareMathOperator{\nr}{nr}
\DeclareMathOperator{\Spf}{Spf}
\DeclareMathOperator{\Reg}{Reg}
\DeclareMathOperator{\lcm}{lcm}
\DeclareMathOperator{\bal}{bal}
\DeclareMathOperator{\loc}{loc}
\DeclareMathOperator{\ad}{ad}
\DeclareMathOperator{\PR}{PR}
\DeclareMathOperator{\Sch}{Sch}
\newcommand{\Lp}{{\mathscr{L}_p}}
\newcommand{\cW}{\mathcal W}
\newcommand{\Q}{\mathbb{Q}}
\newcommand{\Z}{\mathbb{Z}}
\newcommand{\Gal}{\mathrm{Gal\,}}
\newcommand{\GL}{\mathrm{GL}}
\newcommand{\Frob}{\mathrm{Fr}}
\newcommand{\Fr}{\mathrm{Fr}}
\newcommand{\ord}{{\mathrm{ord}}}
\newfont{\gotip}{eufb10 at 12pt}
\newcommand{\lra}{\longrightarrow}
\newcommand{\CH}{{\mathrm{CH}}}
\newcommand{\AJ}{{\mathrm{AJ}}}
\newcommand{\hf}{{\mathbf{f}}}
\newcommand{\hg}{{\mathbf{g}}}
\newcommand{\hh}{{\mathbf h}}
\DeclareMathOperator{\Hom}{Hom}
\newcommand{\res}{\mathrm{res}}
\begin{document}

\title[Generalized Kato classes and exceptional zero conjectures]{Generalized Kato classes and exceptional zero conjectures}

\author{\'Oscar Rivero}

\begin{abstract}
The primary objective of this paper is the study of different instances of the {\em elliptic Stark conjectures} of Darmon, Lauder and Rotger, in a situation where the elliptic curve attached to the modular form $f$ has split multiplicative reduction at $p$ and the arithmetic phenomena are specially rich. For that purpose, we resort to the principle of improved $p$-adic $L$-functions and study their $\mathcal L$-invariants. We further interpret these results in terms of {\it derived} cohomology classes coming from the setting of diagonal cycles, showing that the same $\mathcal L$-invariant which arises in the theory of $p$-adic $L$-functions also governs the arithmetic of Euler systems. Thus, we can reduce, in the split multiplicative situation, the conjecture of Darmon, Lauder and Rotger to a more familiar statement about higher order derivatives of a triple product $p$-adic $L$-function at a point lying {\em inside} the region of classical interpolation, in the realm of the more well-known {\em exceptional zero conjectures}.
\end{abstract}

\address{O. R.: Departament de Matem\`{a}tiques, Universitat Polit\`{e}cnica de Catalunya, C. Jordi Girona 1-3, 08034 Barcelona, Spain}
\email{oscar.rivero@upc.edu}

\subjclass{11G05, 11G40}

\maketitle

\tableofcontents

\section{Introduction}

The elliptic Stark conjecture was first formulated by Darmon, Lauder and Rotger in \cite{DLR1} as a ``more constructive alternative to the Birch and Swinnerton-Dyer conjecture, since it often allows the efficient analytic computation of $p$-adic logarithms of global points".
%As pointed out by the authors, ``it also yields conjectural constructions and explicit formulae, in situations of rank one and two, for global points over cyclotomic fields, abelian extensions of quadratic fields which are not necessarily anticyclotomic, and extensions of $\mathbb Q$ with Galois group a central extension of $A_4$, $S_4$ or $A_5$".
The conjecture relates a $p$-adic iterated integral attached to a triple $(f,g,h)$ of cuspidal modular forms with a regulator given in terms of points in an elliptic curve, in a rank 2 situation. Until the moment, not too much work towards the proof of the conjecture has been done: most of the results are restricted to situations where there exists a factorization of $p$-adic $L$-functions, which allows to interpret the conjecture in terms of the more familiar objects of Bertolini--Darmon--Prasanna \cite{BDP}.
%A multiplicative setting of the conjecture had already been studied in \cite{CR}, but restricted to the case of theta series of imaginary quadratic fields, where a factorization formula for the triple product $L$-series is also available. This is based on the results of Castella \cite{Cas}, which extend the work of \cite{BDP} to the split multiplicative situation.

However, recent works of Bertolini--Seveso--Veneruci \cite{BSV}, \cite{BSV2} and Darmon--Rotger \cite{DR3.5}, \cite{DR3} suggest an alternative conjecture also in terms of triple product $p$-adic $L$-functions: while the first formulation of \cite{DLR1} is concerned with the $p$-adic value at a point lying {\em outside} the region of classical interpolation, the {\em new} version we discuss is about higher order derivatives at a point which belongs to the classical interpolation region. This setting is germane to that explored firstly by Greenberg--Stevens \cite{GS} and then by Bertolini--Darmon \cite{BD} or Venerucci \cite{Ven}. We propose an alternative conjecture in the split multiplicative setting, and one of the main results of this note is the discussion of the equivalence between both formulations, using for that purpose the setting of generalized cohomology classes. This relies, however, on an apparently deep fact about periods of weight one modular forms, stated in \cite{DR2.5} as Conjecture 2.1. We believe that this {\em translation} of the conjecture to a more well understood setting provides new evidence for a better understanding of the problem.

The genesis of this project comes from a parallel story where a new conjecture, formulated in \cite{DLR2}, arises; this gives a formula for the $p$-adic iterated integral when the modular form $f$ is no longer cuspidal, but an Eisenstein series. In \cite{RiRo1}, the authors envisaged a method of proof for this conjecture when the two modular forms $(g,h)$ are self-dual: this was based on Hida's improved factorization theorem for the Hida--Rankin $p$-adic $L$-function and allowed us to study the conjecture in terms of a question concerning Galois deformations.

The discussion of our results in this paper also leads us to the study of an exceptional vanishing of the generalized cohomology classes of \cite{DR2.5} and \cite{CH}, proposing a putative refinement in terms of some {\it derived} generalized cohomology classes.

\vskip 12pt

{\bf Setting and notations.}
Fix once for all a prime number $p \geq 3$ and three positive integers $N_f$, $N_g$, $N_h$. Let $N = \lcm(N_f,N_g,N_h)$ and assume that $p \nmid N$. Let $\chi: (\mathbb Z/N\mathbb Z)^{\times} \rightarrow \mathbb C^{\times}$ be a Dirichlet character. Let \[ f \in S_2(pN_f), \quad g \in M_1(N_g, \chi), \quad h \in M_1(N_h, \bar \chi) \] be a triple of newforms of weights $(2,1,1)$, levels $(pN_f,N_g,N_h)$ and nebentype characters $(1,\chi,\bar \chi)$, where $\bar \chi$ stands for the character obtained by composing $\chi$ with complex conjugation. Further, we denote by $V_g$ and by $V_h$ the Artin representations attached to $g$ and $h$, respectively, and write $V_{gh}:=V_g \otimes V_h$. Let $H$ be the number field cut out by this representation, and $L$ for the field over which it is defined.
To simplify the exposition, we assume that $f$ has rational Fourier coefficients and that is attached via modularity to an elliptic curve $E$ with split multiplicative reduction at $p$. Under the assumption that $(pN_f,N_gN_h)=1$, the global sign of the functional equation of $L(E,V_{gh},s)$ is $+1$. We keep this assumption from now on.

Label and order the roots of the $p$-th Hecke polynomial of $g$ as \[ X^2-a_p(g)X+\chi(p) = (X-\alpha_g)(X-\beta_g) \] and do the same for those of $h$. Let $g_\alpha(q) = g(q) -\beta_g(q^p)$ denote the $p$-stabilization of $g$ with $U_p$-eigenvalue $\alpha_g$; it is defined by the $q$-expansion $g_{\alpha}(q)=g(q)-\beta_g g(q^p)$. We want to deal with a situation of exceptional zeros, that is, where one or several of the Euler factors involved in the interpolation formula of the $p$-adic $L$-function vanish (alternatively, and as we will see later on, this can be understood in terms of the eigenvalues for the Frobenius action). This naturally splits into two different settings, namely
\begin{enumerate}
\item[(a)] the case where $\alpha_g \alpha_h = 1$ (and therefore $\beta_g \beta_h = 1$); and
\item[(b)] the case where $\alpha_g \beta_h = 1$ (and therefore $\beta_g \alpha_h = 1$).
\end{enumerate}

In both cases, if we denote the roots of the $p$-th Hecke polynomial of $g$ by $\{\alpha_g,\beta_g\}$, those of $h$ are $\{1/\alpha_g,1/\beta_g\}$. As a piece of notation, we write $h_{1/\alpha}$ and $h_{1/\beta}$ for the $p$-stabilizations of $h$ with eigenvalues $1/\alpha_g$ and $1/\beta_g$, respectively. Along this work, we refer to these settings as {\it Case (a)} and {\it Case (b)}. In the framework of Beilinson--Flach elements and Hida--Rankin $p$-adic $L$-functions, the second case has been studied in \cite{RiRo1}, and the former has been worked out in \cite[Section 5]{RiRo2}.

To prove our main results, we also need a classicality property for $g$. Hence, we assume throughout that
\begin{enumerate}
\item[(H1)] the reduction of both $V_g$ and $V_h$ modulo $p$ is irreducible (this requires the choice of integral lattices $T_g$ and $T_h$, but the fact of being irreducible or not is independent of this choice);
\item[(H2)] $g$ and $h$ are $p$-distinguished, i.e, $\alpha_g \neq \beta_g, \alpha_h \neq \beta_h \pmod{p}$; and
\item[(H3)] $V_g$ is not induced from a character of a real quadratic field in which $p$ splits.
\end{enumerate}

Enlarge $L$ if necessary so that it contains all Fourier coefficients of $g_\alpha$. As shown in \cite{DLR1}, the above hypotheses ensure that any generalized overconvergent modular form with the same generalized eigenvalues as $g_\alpha$ is classical, and hence simply a multiple of $g_\alpha$.

In order to describe our results more precisely, let $\Lambda = \Z_p[[\Z^\times_p]]$ be the Iwasawa algebra and denote by $\mathcal W = \Spf(\Lambda)$ the weight space. Hida's theory associates the following data to $f$:
\begin{itemize}
\item a finite flat extension $\Lambda_{\hf}$ of $\Lambda$, giving rise to a covering $\mathrm{w}: \mathcal W_{{\bf f}} = \Spf(\Lambda_{\hf}) \lra \cW$;
\item a family of overconvergent $p$-adic ordinary modular forms ${\bf f} \in \Lambda_{\hf}[[q]]$ specializing to $f$ at some point $x_0\in \mathcal W_{{\bf f}}$ of weight $\mathrm{w}(x_0) = 2$.
\item a representation of the absolute Galois group $G_\Q$, $\varrho_{\hf}: G_\Q \lra \GL(\mathbb V_{\hf}) \simeq \GL_2(\Lambda_{\hf})$ characterized by the property that all its classical specializations coincide with the Galois representation associated by Deligne to the corresponding specialization of the Hida family.
\end{itemize}

The same occurs with $g_{\alpha}$ and $h_{\alpha}$ thanks to the work of Bellaiche and Dimitrov \cite{BeDi} on the geometry of the eigencurve for points of weight one; we denote by $\Lambda_{\hg}$ and $\Lambda_{\hh}$ the corresponding extensions of $\Lambda$ over which the Hida families $\hg$ and $\hh$ are defined, and by $y_0 \in \mathcal W_{\hg}$, $z_0 \in \mathcal W_{\hh}$ the weight one points for which the corresponding specializations agree with $g_{\alpha}$ and $h_{\alpha}$, respectively.

For each of the settings (a) and (b) presented above, we discuss three different objects which are expected to encode arithmetic information regarding the convolution of the three Galois representations attached to the modular forms $f$, $g$ and $h$. We denote by $(x,y,z)$ a triple of points in $\mathcal W_{\hf} \times \mathcal W_{\hg} \times \mathcal W_{\hh}$, whose weights are referred as $(k,\ell,m)$.

\begin{enumerate}
\item[(i)] The cohomology classes $\kappa(f,g_{\alpha},h_{\alpha})$ studied for instance in \cite{DR2.5} and \cite{CH}, arising as the specialization at weights $(2,1,1)$ of the three-variable family $\kappa(\hf,\hg,\hh)$ constructed as the image under a $p$-adic Abel-Jacobi map of certain diagonal cycles. In general, one may construct four different classes \[ \kappa(f,g_{\alpha},h_{\alpha}), \quad \kappa(f,g_{\alpha},h_{\beta}), \quad \kappa(f,g_{\beta},h_{\alpha}), \quad \kappa(f,g_{\beta},h_{\beta}), \] one for each $p$-stabilization of $g$ and $h$. Further, when some of these classes vanish, we are lead to consider their derivatives.
\item[(ii)] The special value $\Lp^f(\hf,\hg,\hh)$ at weights $(2,1,1)$ and its derivatives. Here, $\Lp^f(\hf,\hg,\hh)$ stands for the three-variable $p$-adic $L$-function attached to three Hida families, characterized by an interpolation property regarding the classical values of the triple product $L$-function at the region where $k \geq \ell+m$. When this function vanishes at the point $(2,1,1)$, the derivatives along different directions of the weight space may encode {\it interesting} arithmetic information.
\item[(iii)] The special value $\Lp^{g_{\alpha}}(\hf,\hg,\hh)$ at weights $(2,1,1)$, denoted $\Lp^{g_{\alpha}}$. This $p$-adic $L$-function is defined in an analogue way to the previous one, but now the region of interpolation is characterized by the inequality $\ell \geq k+m$ so the point $(2,1,1)$ is outside the region of classical interpolation. Similarly, we may also take $\Lp^{h_{\alpha}}(\hf,\hg,\hh)$, whose region of interpolation concerns those points for which $m \geq k+\ell$. Observe that the first value depends on the choice of $p$-stabilizations for the weight one form $g_{\alpha}$.
\end{enumerate}

{\bf (i) Cohomology classes coming from the theory of diagonal cycles.} We begin by recalling the results concerning cohomology classes. Results of this kind had already been explored in \cite{BSV2} and \cite{DR3} when $\alpha_g \alpha_h = 1$. In that case, the cohomology class is not expected to vanish, but the numerator of the (Perrin-Riou) regulator in the reciprocity law for $\Lp^f$ does, which is coherent with the fact that the $p$-adic $L$-function $\Lp^f(f,g,h)$ is zero (this can be seen, of course, as an exceptional zero coming from the vanishing of an Euler factor).

Here we are mostly interested in the case where the denominator of the Perrin-Riou regulator in the reciprocity law for $\Lp^{g_{\alpha}}$ vanishes due to another exceptional zero phenomenon. This occurs when $\alpha_g \beta_h = 1$ and leads us to recover the ideas of \cite{Cas}, \cite{RiRo1} and \cite{R1}, where this same phenomenon was studied for Heegner points, Beilinson--Flach elements and elliptic units, respectively. In those cases, the reciprocity laws linking Euler systems and $p$-adic $L$-functions were updated to {\it derived reciprocity laws}. A different approach is taken also in \cite[Section 8]{BSV}, where the authors introduce certain {\it improved} cohomology classes, which in this case we may compare in an explicit way with appropriate {\it derived} elements.

Define the three-variable Iwasawa algebra $\Lambda_{{\bf f g h}}:= \Lambda_{{\bf f}} \hat \otimes_{\mathbb Z_p} \Lambda_{{\bf g}} \hat \otimes_{\mathbb Z_p} \Lambda_{{\bf h}}$  and the $\Lambda_{{\bf f g h}}[G_\Q]$-module \[ \mathbb V_{{\hf \hg \hh}}:= \mathbb V_{\hf} \hat \otimes_{\mathbb Z_p} \mathbb V_{{\hg}} \hat\otimes_{\mathbb Z_p} \mathbb V_{{\hh}}. \] We work with $\mathbb V_{{\bf fgh}}^{\dag}$, a certain twist of it by an appropriate power of the $\Lambda$-adic cyclotomic character defined for instance in \cite[Section 5.1]{DR3} and that is needed to satisfy the self-dual assumption.

The works \cite{BSV} and \cite{DR3} attach to $(\hf,\hg,\hh)$ a $\Lambda$-adic global cohomology class \[ \kappa({\bf f}, {\bf g},{\bf h}) \in H^1(\Q,\mathbb V_{\bf fgh}^{\dag}) \] parameterized by the triple product of the weight space $\cW_{\hf \hg \hh} := \mathcal W_{{\bf f}} \times \mathcal W_{{\bf g}} \times \mathcal W_{{\bf h}}$.

Consider the specialization of the class at weights $(x_0,y_0,z_0)$, \[ \kappa(f,g_{\alpha},h_{1/\beta}) \in H^1(\mathbb Q, V_{fgh}), \]
where $V_{fgh}$ is the tensor product $V_f \otimes V_g \otimes V_h$ of the Galois representations attached to the modular forms $f$, $g$ and $h$.
This class can be shown to be trivial and hence we are placed to work with an appropriate {\it derived} class $\kappa'(f,g_{\alpha},h_{1/\beta})$.

As it occurred in the setting of Beilinson--Flach classes, the notion of derivative is rather flexible. Following \cite{RiRo1}, we consider here a derivative along an analytic direction, and keeping fixed the weight of $h$. Rather informally, this may be thought as the line $(\ell+1,\ell,1)$ of the weight space. Note that at least in the self-dual case, where we may argue that the corresponding class vanishes all along the line $(2,\ell,\ell)$, we may consider the derivative along any direction of the weight space.

Let $\alpha_{\hf}$ (resp. $\alpha_{\hg}$, $\alpha_{\hh}$) stand for the Iwasawa function corresponding to the root of the $p$-th Hecke polynomial of $\hf$ (resp. $\hg$, $\hh$) with smallest $p$-adic valuation. As an additional piece of notation, let
\begin{equation}\label{l1}
\mathcal L:=\frac{\alpha_g'}{\alpha_g} - \frac{\alpha_f'}{\alpha_f},
\end{equation}
where $\alpha_f'$ (resp. $\alpha_g'$, $\alpha_h'$) stands for the derivative of the Frobenius eigenvalues at $x_0$ (resp. $y_0$, $z_0$) when seen as an Iwasawa function along the Hida family $\Lambda_{\hf}$ (resp. $\Lambda_{\hg}$, $\Lambda_{\hh}$). Observe that we can give explicit formulas for $\mathcal L$, involving both some units and $p$-units in the field cut out by the representation $V_{gh}$ and the Tate uniformizer of the elliptic curve $E$. Hence, the $\mathcal L$-invariant governing the arithmetic of the triple $(f,g,h)$ is related both with the $\mathcal L$-invariant of the elliptic curve (the logarithm of the Tate uniformizer) and also with the regulator attached to the adjoint representation $\ad^0(V_g)$, expressed in \cite{RiRo1} as a combination of logarithms of units and $p$-units. Compare for instance this result with the main theorem of \cite{Cas}, where he interprets the $\mathcal L$-invariant attached to a modular form $f$ and an anticyclotomic character as the sum of the two $\mathcal L$-invariants. Our first main result is the following (see Theorem \ref{reclaw} for the precise formulation), relating an appropriate logarithm of the {\it derived} class with the special value $\Lp^{g_{\alpha}}$.

\begin{theorem}
The derived cohomology class satisfies \[ \langle \log_{\BK}(\kappa_p'(f,g_{\alpha},h_{1/\beta})^g), \eta_f \otimes \omega_{g_{\alpha}} \otimes \omega_{h_{1/\beta}} \rangle =  \mathcal L \cdot \Lp^{g_{\alpha}} (\hf,\hg,\hh)(x_0,y_0,z_0) \pmod{L^{\times}}, \] where the superindex $g$ stands for an appropriate projection of $\kappa_p'$ that we later introduce, and $\log_{\BK}$ refers to the Bloch--Kato logarithm, followed by the pairing $\langle \, -,- \, \rangle$ with certain canonical differentials.
\end{theorem}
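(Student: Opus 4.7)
The plan is to start from the explicit three-variable reciprocity law of Bertolini--Seveso--Venerucci and Darmon--Rotger (\cite{BSV}, \cite{DR3}) and to differentiate both sides at $(x_0,y_0,z_0)$. Concretely, I would invoke an identity of the shape
\[
\bigl\langle \Log_{\PR}(\kappa(\hf,\hg,\hh)^g),\, \heta_{\hf}\otimes \homega_{\hg}\otimes \homega_{\hh}\bigr\rangle \;=\; \mathcal E(\hf,\hg,\hh)\cdot \Lp^{g_{\alpha}}(\hf,\hg,\hh)
\]
in $\Lambda_{\hf\hg\hh}$, where $\kappa^g$ denotes the projection of the local class to the rank-one $\Lambda$-adic subquotient of $\mathbb V_{\hf\hg\hh}^{\dag}|_{G_{\Q_p}}$ along which the Perrin-Riou regulator is constructed, and $\mathcal E$ is a product of Euler-type factors whose specialization at $(x_0,y_0,z_0)$ controls the interpolation of $\Lp^{g_{\alpha}}$ at $(2,1,1)$.

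Under the hypothesis $\alpha_g\beta_h=1$ of Case (b), together with $\alpha_f=1$ forced by the split multiplicative reduction of $E$ at $p$, exactly one factor of $\mathcal E$---of the shape $1-\alpha_{\hf}\beta_{\hh}/\alpha_{\hg}$---vanishes at $(x_0,y_0,z_0)$. Under the mild genericity assumption $\Lp^{g_{\alpha}}(x_0,y_0,z_0)\neq 0$, this forces the left-hand side to vanish at the point, consistently with the known triviality of the specialization $\kappa(f,g_{\alpha},h_{1/\beta})$. The derived class $\kappa'_p(f,g_{\alpha},h_{1/\beta})^g$ is then defined by picking a local uniformizer $T$ along the analytic direction $(\ell+1,\ell,1)$ in $\mathcal W_{\hf\hg\hh}$ through $(x_0,y_0,z_0)$ and dividing the local $\Lambda$-adic class by $T$ before restricting to the point.

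Applying $\partial:=d/dT$ to both sides of the reciprocity law and evaluating at the point, the Leibniz rule together with $\mathcal E(x_0,y_0,z_0)=0$ collapses the right-hand side to
\[
\partial\bigl\langle \Log_{\PR}(\kappa^g), \heta_{\hf}\otimes \homega_{\hg}\otimes \homega_{\hh}\bigr\rangle(x_0,y_0,z_0) \;=\; \partial\mathcal E(x_0,y_0,z_0)\cdot \Lp^{g_{\alpha}}(x_0,y_0,z_0).
\]
A direct computation of the logarithmic derivative of the vanishing Euler factor, using that the weight of $\hh$ is kept fixed so that no contribution from $\alpha'_{\hh}$ or $\beta'_{\hh}$ appears and that $\alpha_f\beta_h/\alpha_g=1$ at the point, yields $\partial\mathcal E(x_0,y_0,z_0)=\alpha_g'/\alpha_g-\alpha_f'/\alpha_f=\mathcal L$ (up to a unit in $L^{\times}$), matching \eqref{l1} exactly.

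The main technical obstacle is to identify the derivative of $\Log_{\PR}(\kappa(\hf,\hg,\hh)^g)$ at $(x_0,y_0,z_0)$ with the Bloch--Kato logarithm of the derived class $\kappa'_p(f,g_{\alpha},h_{1/\beta})^g$ paired with the classical differentials. I would follow the strategy of \cite{RiRo1} and \cite{R1}: realize the Perrin-Riou big logarithm on the relevant rank-one $\Lambda$-adic sub-representation as a Coleman-type power series on $\mathcal W_{\hf\hg\hh}$, observe that its value at $(x_0,y_0,z_0)$ is proportional to the dual exponential of $\kappa(f,g_{\alpha},h_{1/\beta})^g$ (which vanishes in accordance with the vanishing of the Euler factor), and then invoke the standard principle---a three-variable avatar of the Bertolini--Darmon / Venerucci derivation---that at a point where the interpolating Euler factor vanishes, the first-order derivative of the big logarithm recovers $\log_{\BK}$ of the derived class, up to a nonzero constant in $L^{\times}$. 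Combining this identification with the computation of $\partial\mathcal E$ above yields the claimed formula modulo $L^{\times}$.
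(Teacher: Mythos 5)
The high-level strategy — differentiate the $\hg$-centric explicit reciprocity law at $(x_0,y_0,z_0)$ along a direction inside $\mathcal D$ and read off $\mathcal L$ from the derivative of the vanishing Euler factor — is indeed the paper's. But your third paragraph rests on a substantive confusion about \emph{which branch} of the Perrin--Riou map is active, and this would derail a literal carrying-out of the plan. You assert that the Perrin--Riou big logarithm of $\kappa^g$ at $(x_0,y_0,z_0)$ is proportional to the \emph{dual exponential} of the specialized class, and then invoke a Bertolini--Darmon/Venerucci-style principle under which the first derivative of the big logarithm switches to produce $\log_{\BK}$ of the derived class. That is not what happens here: the point $(2,1,1)$ lies in the $f$-dominant region $k\geq\ell+m$, hence \emph{outside} the $g$-dominant region $\ell\geq k+m$, and so (by the interpolation formula of Proposition \ref{perrin} transposed to $\mathcal L_{\hg,\hh\hf}$) the Perrin--Riou map at that point already interpolates the Bloch--Kato \emph{logarithm}, not the dual exponential. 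The paper's proof turns precisely on this fact: along $\mathcal D$ the reciprocity law unfolds to
\[
\Big(1-\tfrac{\bar\chi(p)\,\alpha_{\hg_y}}{p\,\alpha_{\hf_x}\alpha_{\hh_z}}\Big)\cdot\langle\log_{\BK}(\kappa_p^g),\,\eta_{\hf}\otimes\omega_{\hg}\otimes\omega_{\hh}\rangle \;=\; \Big(1-\tfrac{\alpha_{\hf_x}\alpha_{\hh_z}}{\bar\chi(p)\,\alpha_{\hg_y}}\Big)\cdot\Lp^{g_\alpha}(\hf,\hg,\hh),
\]
and at $(x_0,y_0,z_0)$ both the logarithm on the left (the class vanishes, by Proposition \ref{tot-zero}) and the \emph{denominator} Euler factor on the right vanish. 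One then differentiates along $(\ell+1,\ell,1)$ and uses the plain $\Lambda$-linearity of $\log_{\BK}$ applied to $\kappa=\delta\cdot\kappa'$ to replace the derivative of the left side by $\log_{\BK}(\kappa')$. There is no ``crystallinity at an exceptional zero'' mechanism and no branch-switching: the class itself vanishes, not merely its dual exponential.

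A secondary slip: the factor you single out, $1-\alpha_{\hf}\beta_{\hh}/\alpha_{\hg}$, does not vanish in Case (b). With $\alpha_f=1$ and $\beta_h=1/\alpha_g$ it evaluates to $1-1/\alpha_g^2$, which is generically nonzero. The factor that actually vanishes is the denominator of $\mathcal E^{\PR}$ for the $\hg$-projection, namely $1-\alpha_{\hf_x}\alpha_{\hh_z}/(\bar\chi(p)\alpha_{\hg_y})$, which at the point equals $1-\alpha_f\alpha_h/(\bar\chi(p)\alpha_g)=0$ since $\alpha_h=\bar\chi(p)\alpha_g$. Its logarithmic derivative along $(\ell+1,\ell,1)$ (where $z=z_0$ is held fixed) yields $\alpha_g'/\alpha_g-\alpha_f'/\alpha_f=\mathcal L$, matching \eqref{l1}. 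Once these two points are repaired, your proposal collapses to the paper's argument.
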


\begin{rem*}
In \cite{BSV} the authors take a different approach to this exceptional zero phenomenon, and construct an {\it improved} cohomology class $\kappa_g^*(f,g_{\alpha},h_{1/\beta})$. As we will later show, there is a connection between both constructions and one may prove (under mild conditions!) that the following equality holds in $H^1(\mathbb Q,(V_f \otimes V_g \otimes V_h)_{|\mathcal S})$, where $\mathcal S$ stands for the subvariety of the weight space along which the derived and the improved class are defined, corresponding to the set of weights $k+m=\ell+2$:
\begin{equation}
\kappa'(f,g_{\alpha},h_{1/\beta}) = \mathcal L \cdot \kappa_g^*(f,g_{\alpha},h_{1/\beta}).
\end{equation}
\end{rem*}

{\bf (ii) The special value $\Lp^f$ and derivatives of the triple product $p$-adic $L$-function.} In subsequent parts of the article we use the previous cohomology classes to study different instances of the elliptic Stark conjecture. Section 4 is devoted to analyze higher order derivatives of $\Lp^f(\hf,\hg,\hh)$ at $(x_0,y_0,z_0)$. The presence of an Euler factor which vanishes at weights $(2,1,1)$ automatically forces the vanishing of that value. Therefore, it is natural to formulate several conjectures for the value of the derivatives of $\Lp^f(\hf,\hg,\hh)$.

When $\alpha_g \alpha_h = 1$ and $L(f \otimes g \otimes h,1) \neq 0$, the results of \cite{BSV} relying on the existence of an improved $p$-adic $L$-function allow us to state the following result. Although this can be seen as a straightforward corollary of the results developed in loc.\,cit., we want to point out that the $\mathcal L$-invariants attached to both $g$ and $h$ have a strong connection with the arithmetic of number fields. This reveals that in the rank 0 situation the quantity $\Lp^f$ is also a putative refinement of the more well-known $\mathcal L$-invariants of Greenberg--Stevens, where not only the Tate period $q_E$ appears. This result follows from \cite[Proposition 8.2]{BSV}.

\begin{pro*}[Bertolini--Seveso--Venerucci]
Let $I$ denote the ideal of functions in $\Lambda_{\hf \hg \hh}$ which vanish at $(x_0,y_0,z_0)$. Assume that $L(f \otimes g \otimes h, 1) \neq 0$, and let $\mathcal L_{\xi}:=\alpha_{\xi}'/\alpha_{\xi}$, for $\xi \in \{f,g_{\alpha},h_{\alpha}\}$. Then, up to a constant in $L^{\times}$, \[ \Lp^f(\hf,\hg,\hh) = (\mathcal L_{g_{\alpha}}-\mathcal L_f)(\ell-1) + (\mathcal L_{h_{\alpha}}-\mathcal L_f)(m-1) \pmod{I^2}. \] Moreover, the quantities $\mathcal L_{\chi}$ are explicitly computable in terms of the arithmetic of number fields and elliptic curves.
\end{pro*}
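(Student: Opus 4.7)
The formula follows as a direct consequence of the theory of improved triple-product $p$-adic $L$-functions developed in [BSV, \S 8]. Under the hypotheses $\alpha_g\alpha_h = 1$ and $\alpha_f = 1$ (forced by the split multiplicative reduction of $f$), Euler factors at $p$ entering the interpolation of $\Lp^f(\hf,\hg,\hh)$ vanish at $(x_0, y_0, z_0)$. BSV's construction produces, along two codimension-one subvarieties $\mathcal S_g, \mathcal S_h \subset \mathcal W_{\hf\hg\hh}$ through $(x_0,y_0,z_0)$, factorizations of $\Lp^f$ into a single vanishing Euler factor times an improved $p$-adic $L$-function whose value at $(x_0,y_0,z_0)$ is nonzero thanks to the hypothesis $L(f \otimes g \otimes h, 1) \neq 0$. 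Each factorization contributes one of the two $\mathcal L$-invariants in the statement.

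\textbf{Execution.} Invoke [BSV, Prop.~8.2] to obtain
\[ \Lp^f(\hf,\hg,\hh)\big|_{\mathcal S_g} \;=\; \mathcal E_g(\hf,\hg,\hh)\cdot \Lp^{f,*}_g, \qquad \Lp^f(\hf,\hg,\hh)\big|_{\mathcal S_h} \;=\; \mathcal E_h(\hf,\hg,\hh)\cdot \Lp^{f,*}_h, \]
with Euler factors of the schematic shape $\mathcal E_g = 1 - \alpha_\hg/\alpha_\hf$ and $\mathcal E_h = 1 - \alpha_\hh/\alpha_\hf$ (up to nonvanishing units at $(x_0,y_0,z_0)$), both vanishing at $(x_0,y_0,z_0)$ since $\alpha_g = 1/\alpha_h$ and $\alpha_f = 1$. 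A short logarithmic differentiation shows that the first-order Taylor coefficient of $\mathcal E_g$ in the $\ell$-direction is $\mathcal L_{g_\alpha} - \mathcal L_f = \alpha'_\hg/\alpha_\hg - \alpha'_\hf/\alpha_\hf$ (up to a constant in $L^\times$), and symmetrically for $\mathcal E_h$ in the $m$-direction. The interpolation property of the improved $L$-functions at nearby classical weights recovers $L(f \otimes g \otimes h, 1)$ up to nonzero algebraic periods and manifestly nonvanishing Euler factors; combined with the non-vanishing hypothesis, this forces $\Lp^{f,*}_g(x_0,y_0,z_0)$ and $\Lp^{f,*}_h(x_0,y_0,z_0)$ to lie in $L^\times$. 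Substituting these values into the two factorizations and expanding modulo $I^2$ yields the claimed formula.

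\textbf{Main obstacle.} The delicate point is to combine the two identities (each valid only on its respective subvariety) into a single statement modulo $I^2$ on the full three-dimensional weight space and, in particular, to rule out a spurious $(k-2)$-contribution. This is handled by exploiting the fact that the tangent planes of $\mathcal S_g$ and $\mathcal S_h$ at $(x_0,y_0,z_0)$ jointly span the $\ell$- and $m$-directions, and by restricting both factorizations simultaneously to the curve $\ell = m = 1$ inside $\mathcal S_g \cap \mathcal S_h$, where one verifies that $\Lp^f$ has no $k$-derivative at $(x_0,y_0,z_0)$. A secondary but important technical task is the bookkeeping of algebraic periods so that the final proportionality constant lies in $L^\times$ rather than merely $\bar L^\times$; this follows from the $p$-distinguishedness and self-duality framework set up in the introduction, and is made explicit in the proof of [BSV, Prop.~8.2].
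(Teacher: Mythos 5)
Your proposed proof has a concrete error at the core of the ``Execution'' step. You claim there are two codimension-one subvarieties $\mathcal S_g$, $\mathcal S_h$ carrying factorizations of $\Lp^f$ with Euler factors of the schematic shape $\mathcal E_g = 1 - \alpha_{\hg}/\alpha_{\hf}$ and $\mathcal E_h = 1 - \alpha_{\hh}/\alpha_{\hf}$, and you assert these vanish at $(x_0,y_0,z_0)$ ``since $\alpha_g = 1/\alpha_h$ and $\alpha_f = 1$.'' That deduction is false: with $\alpha_f = 1$ and $\alpha_g\alpha_h = 1$, one has $\mathcal E_g(x_0,y_0,z_0) = 1 - \alpha_g$, which vanishes only if $\alpha_g = 1$, and that is not one of the hypotheses (it would in fact contradict $p$-distinguishedness together with $\alpha_g\alpha_h = 1$ in general). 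The Euler factor that actually degenerates, as recorded in the paper's Proposition \ref{propo-imp}, is the single factor $1 - \alpha_{\hf}^{-1}\alpha_{\hg}\alpha_{\hh}$, which lives on the \emph{single} surface $\mathcal S_{k=\ell+m}$ and vanishes at $(2,1,1)$ precisely because $\alpha_f = 1$ and $\alpha_g\alpha_h = 1$. There is one improved $p$-adic $L$-function, not two.

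Because the factorization exists only along $\mathcal S_{k=\ell+m}$, one obtains from it the two \emph{tangential} derivatives of $\Lp^f$ at $(x_0,y_0,z_0)$, and then needs a further input to conclude that the ambient $k$-derivative vanishes. Your ``main obstacle'' paragraph tries to supply this by restricting both factorizations ``to the curve $\ell=m=1$ inside $\mathcal S_g \cap \mathcal S_h$,'' but this does not make sense once the subvarieties are corrected: the line $(k,1,1)$ meets $\mathcal S_{k=\ell+m}$ only at the single point $(2,1,1)$, so there is no curve along which to restrict and verify anything. The paper itself does not carry out this argument; it cites \cite[Proposition 8.2]{BSV}, which is the statement that records both the improved factorization on $\mathcal S_{k=\ell+m}$ and the vanishing of the $k$-component of the Jacobian, so the delicate point you flagged is genuinely nontrivial and is part of what must be taken from [BSV]. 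As written, your argument does not establish either the linear coefficients or the absence of a $(k-2)$-term, and the differentiation you perform is inconsistent with your own (incorrect) choice of Euler factors (the logarithmic derivative of $1 - \alpha_{\hg}/\alpha_{\hf}$ at the claimed zero would not produce $\mathcal L_{g_\alpha} - \mathcal L_f$ in the stated form).
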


Observe for example that the derivative along the $y$-direction agrees with the $\mathcal L$-invariant that also arises as the derivative of the diagonal class discussed before.

However, the most interesting case appears when $L(f \otimes g \otimes h,1) = 0$. Let us put ourselves in the setting of \cite{DLR1} and assume that $(E(H) \otimes V_{gh}^{\vee})^{\Gal(H/\mathbb Q)}$ is two-dimensional, where $V_{gh}^{\vee}$ stands for the contragradient representation of $V_{gh}$. This group is equipped with an inclusion in the $p$-adic Selmer group corresponding to the group of extensions of $\mathbb Q_p$ by $V_{fgh}$ in the category of $\mathbb Q_p$-linear representations of $G_{\mathbb Q}$ that are crystalline at $p$. This group is denoted by $H_{\fin}^1(\mathbb Q, V_{fgh})$, and we also assume that is two-dimensional (the latter would follow from the Birch and Swinnerton-Dyer conjecture for the pair $(E,V_{gh})$ and the finiteness of the corresponding Tate--Shafarevich group).

Let $\{P,Q\}$ denote generators of $(E(H) \otimes V_{gh}^{\vee})^{G_{\mathbb Q}}$, and fix a basis $\{e_{\alpha \alpha}^{\vee}, e_{\alpha \beta}^{\vee}, e_{\beta \alpha}^{\vee}, e_{\beta \beta}^{\vee}\}$ of $V_{gh}^{\vee}$ as a $G_{\mathbb Q_p}$-module with the Frobenius action. This allows us to write \[ P = P_{\beta \beta} \otimes e_{\beta \beta}^{\vee} + P_{\beta \alpha} \otimes e_{\beta \alpha}^{\vee} + P_{\alpha \beta} \otimes e_{\alpha \beta}^{\vee} + P_{\alpha \alpha} \otimes e_{\alpha \alpha}^{\vee}, \] and similarly for $Q$. Here, the arithmetic Frobenius $\Frob_p$ acts on $P_{\beta \beta}$ with eigenvalue $\beta_g \beta_h$ and analogously for the remaining components. In this scenario, we can conjecture the following result, that we extensively discuss in Section \ref{conj-S2}.

\begin{conj}\label{conj1}
Assume that the $L$-dimension of $(E(H) \otimes V_{gh}^{\vee})^{G_{\mathbb Q}}$ is two. Then, under the running assumptions, the $p$-adic $L$-function $\Lp^f(\hf,\hg,\hh)$ satisfies
\[ \frac{\partial^2 \Lp^f(\hf_x,g_{\alpha},h_{1/\alpha})}{\partial x^2} \Big|_{x=x_0} = \log_p(P_{\beta \beta}) \cdot \log_p(Q_{\alpha \alpha}) - \log_p(Q_{\beta \beta}) \cdot \log_p(P_{\alpha \alpha}) \pmod{L^{\times}}. \] If the $L$-dimension of $(E(H) \otimes V_{gh}^{\vee})^{G_{\mathbb Q}}$ is greater than two, then the left hand side vanishes.
\end{conj}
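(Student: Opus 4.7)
The plan is to reduce Conjecture \ref{conj1} to the original rank-$2$ elliptic Stark conjecture of \cite{DLR1} (whose natural $p$-adic $L$-function incarnation is $\Lp^{g_\alpha}$) through the factorization and reciprocity machinery assembled in this paper. Following \cite[\S 8]{BSV}, along the slice $m=1$ the three-variable $p$-adic $L$-function admits a factorization
\begin{equation*}
\Lp^f(\hf,\hg,h_{1/\alpha}) \;=\; \mathcal E(\hf,\hg)\cdot \Lp^{f,*}(\hf,\hg) \pmod{L^\times},
\end{equation*}
where $\mathcal E$ is the Euler-factor term carrying the exceptional zero (it vanishes at $(x_0,y_0)$ because $\alpha_g\alpha_h=1$) and $\Lp^{f,*}$ is the improved $p$-adic $L$-function. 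The rank-$2$ hypothesis forces $L(f\otimes g\otimes h,1)=0$, and via the interpolation formula for $\Lp^{f,*}$ this gives the further vanishing $\Lp^{f,*}(x_0,y_0)=0$. Hence $\Lp^f(\hf_x,g_\alpha,h_{1/\alpha})$ has a double zero at $x_0$ and
\begin{equation*}
\tfrac{1}{2}\frac{\partial^2 \Lp^f}{\partial x^2}\Big|_{x_0} \;=\; \Bigl(\frac{\partial \mathcal E}{\partial x}\Big|_{x_0}\Bigr)\cdot \Bigl(\frac{\partial \Lp^{f,*}}{\partial x}\Big|_{(x_0,y_0)}\Bigr) \pmod{L^\times}.
\end{equation*}

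The first factor is the Greenberg--Stevens $\mathcal L$-invariant $\log_p(q_E)/\ord_p(q_E)$ attached to the Tate uniformizer of $E$, computed as the derivative along the $x$-direction of the vanishing Euler factor. The second factor I would evaluate via the improved reciprocity law of \cite[\S 8]{BSV}, which expresses $\partial_x \Lp^{f,*}(x_0,y_0)$ as a Bloch--Kato pairing of the improved diagonal class $\kappa_g^*(f,g_\alpha,h_{1/\alpha})$ against $\eta_f\otimes\omega_{g_\alpha}\otimes\omega_{h_{1/\alpha}}$. Under the rank-$2$ hypothesis, the Kummer images of $P$ and $Q$ span the finite Selmer group $H^1_{\fin}(\mathbb Q,V_{fgh})$, so the improved class is an $L$-linear combination of these two generators. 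Projection onto the $\omega_{g_\alpha}\otimes\omega_{h_{1/\alpha}}$-component picks out the $\alpha\alpha$-Frobenius parts, giving a combination of $\log_p(P_{\alpha\alpha})$ and $\log_p(Q_{\alpha\alpha})$. Identifying the coefficients via the equivalence established in Theorem \ref{reclaw} between the $\Lp^{g_\alpha}$-formulation and the derived-class formulation (which, under \cite[Conj.~2.1]{DR2.5}, reduces precisely to the original DLR conjecture), these coefficients are recognized as $\log_p(Q_{\beta\beta})$ and $\log_p(P_{\beta\beta})$. Multiplying by the $\mathcal L$-invariant reassembles the required $2\times 2$ determinant.

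The main obstacle is the last identification: pinning down the precise coefficients of the diagonal class in the basis $\{P,Q\}$ relies on the duality pairing the $\alpha\alpha$-components (visible through the diagonal class) with the $\beta\beta$-components (visible through the $\mathcal L$-factor). This duality is exactly the arithmetic content of the original elliptic Stark conjecture, and is the reason the statement is conditional on the period conjecture \cite[Conj.~2.1]{DR2.5}. The vanishing assertion in the higher rank case $\dim_L (E(H)\otimes V_{gh}^\vee)^{G_\mathbb Q}\geq 3$ follows from a dimension count: the improved diagonal class still lies in a two-dimensional subspace of the $\omega_{g_\alpha}\otimes\omega_{h_{1/\alpha}}$-projection of $H^1_{\fin}$, whereas the ambient Selmer space is strictly larger, and a standard determinantal argument forces the right-hand side—and hence the second derivative—to collapse.
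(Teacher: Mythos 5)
This statement is a \emph{conjecture}, and the paper does not prove it. The nearest things to a proof are Proposition~\ref{redu} (which establishes the conjecture only in certain reducible dihedral cases via factorizations) and Proposition~\ref{3coses} (which, assuming non-vanishing of two regulators, shows that Conjecture~\ref{conj1} is implied by the conjunction of the elliptic Stark conjecture for $\Lp^{g_\alpha}$ and the period conjecture of \cite{DR2.5}). You end up aiming at the right target --- the conditional reduction --- but you present it as if it were close to a proof, and more importantly the mechanism you use does not match what the paper has available.

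There are two concrete gaps in the Leibniz step. First, the factorization $\Lp^f = \mathcal E \cdot \Lp^{f,*}$ from Proposition~\ref{propo-imp} only holds on the surface $\mathcal S_{k=\ell+m}$. With $\ell=m=1$ fixed and $k$ moving, the only point of that surface on your line is $(2,1,1)$; the line $(k,1,1)$ is transverse to $\mathcal S$. So the identity $\partial_x^2\Lp^f|_{x_0} = 2\,\partial_x\mathcal E|_{x_0}\cdot\partial_x\Lp^{f,*}|_{(x_0,y_0)}$ does not follow from the factorization --- one is differentiating in a direction along which $\Lp^{f,*}$ is not even defined. This is precisely why the paper invokes Proposition~\ref{beta-beta} (from \cite{BSV2}), which bypasses the improved $L$-function in the transverse direction and directly expresses $\partial_x^2\Lp^f|_{x_0}$ as an explicit multiple of the Bloch--Kato logarithm $\log^{++}(\kappa_p(f,g_\alpha,h_\alpha))$. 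Second, the reciprocity law the paper has for $\Lp^{f,*}$ along $\mathcal S_{k=\ell+m}$ involves the \emph{dual exponential} $\exp_{\BK}^*$, not the Bloch--Kato logarithm, because the Perrin-Riou regulator on the $f$-projection interpolates $\exp^*$ in that region. The Bloch--Kato-logarithm reciprocity laws in this paper concern the $g$- and $h$-projections (for $\Lp^{g_\alpha}$ and $\Lp^{h_\alpha}$), not $\Lp^{f,*}$. So your claim that an ``improved reciprocity law expresses $\partial_x\Lp^{f,*}(x_0,y_0)$ as a Bloch--Kato pairing of $\kappa_g^*$'' has no support in the text; it conflates two different Perrin-Riou maps.

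The final coefficient-identification you describe (reading off $\log_p(P_{\beta\beta})$ and $\log_p(Q_{\beta\beta})$ from the class) is morally right, but it is not a minor technicality: it \emph{is} the elliptic Stark conjecture for $\Lp^{g_\alpha}$ combined with the period conjecture. That is exactly the content of Proposition~\ref{3coses}, which you should have cited as the backbone of the reduction rather than the improved $L$-function machinery. If you replace your Leibniz/improved-$L$-function step with the input of Proposition~\ref{beta-beta}, and then carry out the argument of Proposition~\ref{3coses}, you recover what the paper actually shows: a conditional equivalence, not a proof.
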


There are other interesting lines along weight space to take derivatives. For example, in \cite{CH} the study is concerned with the line $(2,\ell,\ell)$, where the derivatives are connected with appropriate {\it derived} heights of the points $P$ and $Q$.

The work of Bertolini--Seveso--Venerucci and Darmon--Rotger establishes the conjecture for the case where $g$ and $h$ are theta series of a quadratic field where $p$ is inert, which leads to a decomposition $V_{gh} = V_{\psi_1} \oplus V_{\psi_2}$. In the imaginary case, we can extend their computations to the split case, observing that here one has a trivial equality of the form $0=0$.

Therefore, we may establish that Conjecture \ref{conj1} holds in some dihedral cases. The first part of this Proposition follows from \cite[Theorem A]{BSV2}, and the second is established as part of Proposition \ref{redu}.

\begin{propo}
Conjecture \ref{conj1} holds in the following cases:
\begin{enumerate}
\item[(a)] CM or RM series with $p$ inert in $K$ and at least one of $\psi_1$ or $\psi_2$ being a genus characters;
\item[(b)] CM series with $p$ split in $K$, $V_{gh} = V_{\psi_1} \oplus V_{\psi_2}$, with each component of rank one.
\end{enumerate}
\end{propo}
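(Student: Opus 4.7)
Part (a) is essentially a direct consequence of \cite[Theorem A]{BSV2}, which computes the second $x$-derivative of $\Lp^f(\hf,\hg,\hh)$ at $(x_0,y_0,z_0)$ in exactly this inert CM/RM setting with a genus character appearing in $V_{gh}$. The only additional work is a translation of their regulator into the bilinear expression $\log_p(P_{\beta\beta})\log_p(Q_{\alpha\alpha})-\log_p(Q_{\beta\beta})\log_p(P_{\alpha\alpha})$ used in Conjecture \ref{conj1}. Since $p$ is inert in $K$, complex conjugation permutes the four Frobenius eigenvalues $\{\alpha_g\alpha_h,\alpha_g\beta_h,\beta_g\alpha_h,\beta_g\beta_h\}$ by swapping the ``diagonal'' with the ``anti-diagonal'' pair, and the genus-character assumption guarantees that the $G_\mathbb Q$-invariant Mordell--Weil module used in loc.\,cit.\ agrees with the Frobenius eigenbasis employed here; the identification is then formal.

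For (b), the aim is to reduce the statement to a trivial equality $0=0$. I would first show that the right-hand side of Conjecture \ref{conj1} vanishes. The hypothesis $V_{gh}=V_{\psi_1}\oplus V_{\psi_2}$ with each isotypic component of rank one allows me to choose generators $P$ supported in the $V_{\psi_1}^\vee$-component and $Q$ in the $V_{\psi_2}^\vee$-component. Since $p$ splits in $K$, each $V_{\psi_i}$ further splits as a $G_{\mathbb Q_p}$-module into one-dimensional Frobenius eigenspaces, and the relations $\alpha_g\beta_g=\chi(p)$, $\alpha_h\beta_h=\bar\chi(p)$ determine a distribution of the four products $\{\alpha_g\alpha_h,\alpha_g\beta_h,\beta_g\alpha_h,\beta_g\beta_h\}$ into two pairs, one per summand. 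After relabelling so that $V_{\psi_1}^\vee$ carries $\{\alpha_g\alpha_h,\beta_g\beta_h\}$, one obtains $P_{\alpha\beta}=P_{\beta\alpha}=Q_{\alpha\alpha}=Q_{\beta\beta}=0$, and hence the right-hand side equals $0$.

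To match this with the left-hand side, I would invoke a factorisation (up to $L^\times$) of the triple product $p$-adic $L$-function,
\[
\Lp^f(\hf,\hg,\hh) = \Lp^f(\hf,\psi_1)\cdot \Lp^f(\hf,\psi_2),
\]
where each factor is a one-variable Hida-style $p$-adic $L$-function of Bertolini--Darmon--Prasanna type attached to $f$ and the ring-class character $\psi_i$ of $K$; such a factorisation follows from the decomposition $V_{gh}=V_{\psi_1}\oplus V_{\psi_2}$ combined with Artin formalism for $L$-values and a comparison of interpolation formulae. At $(x_0,y_0,z_0)$ the split multiplicative reduction of $E$ imposes a trivial zero on each factor. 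A Leibniz-rule computation, together with the identification of the first $x$-derivative of each factor with the formal-group logarithm of a point generating the corresponding $V_{\psi_i}^\vee$-component of $E(H)\otimes V_{gh}^\vee$, then shows that the second $x$-derivative of the product collapses to a linear combination of products of logarithms which, by the same Frobenius-eigenspace considerations as above, equals $0$. The main obstacle will be establishing the factorisation with precise control of periods and of the $p$-Euler factors at the critical specialisation; once this is in place, the identity $0=0$ is formal and matches the spirit of Proposition \ref{redu}.
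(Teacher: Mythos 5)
Part (a) of your proposal is exactly the paper's argument: both rely on \cite[Theorem A]{BSV2} (i.e.\ the content of Section 3 of loc.\,cit.) plus the standard matching of the Frobenius eigenbasis with the regulator used there, so nothing to add.

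For part (b), the analysis of the right-hand side of Conjecture \ref{conj1} is correct. Since $p$ splits, the four Frobenius eigenvectors distribute into the two rank-one summands as $\{e_{\alpha\alpha},e_{\beta\beta}\}$ for $V_{\psi_1}$ and $\{e_{\alpha\beta},e_{\beta\alpha}\}$ for $V_{\psi_2}$; choosing $P$ in one summand and $Q$ in the other kills every term of $\log_p(P_{\beta\beta})\log_p(Q_{\alpha\alpha})-\log_p(Q_{\beta\beta})\log_p(P_{\alpha\alpha})$. Your labelling (killing the $Q$-components) differs from the paper's (which assumes $P_{\beta\alpha}\neq 0$ and deduces $P_{\alpha\alpha}=P_{\beta\beta}=0$), but the two are interchangeable.

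The gap is in your treatment of the left-hand side of (b). You propose to factor $\Lp^f(\hf,\hg,\hh)$ into a product of two BDP/Castella-type $p$-adic $L$-functions attached to $(\hf,\psi_i)$, to be proved by ``Artin formalism \ldots\ and a comparison of interpolation formulae.'' This is precisely the route the paper flags as obstructed (see the Remark following Conjecture \ref{conjFa}): in the split case, the Castella $p$-adic $L$-functions have infinity types whose interpolation ranges are disjoint from the $f$-dominant range $k\geq\ell+m$ on which $\Lp^f$ is characterised, so there is no common region of classical points on which a comparison of interpolation formulae could establish the factorisation. Your own closing caveat (``the main obstacle will be establishing the factorisation'') understates the problem: it is not a matter of normalising periods and Euler factors, but that the naive method of proof is unavailable. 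The paper sidesteps the factorisation entirely. It first invokes Proposition \ref{beta-beta} (the content of \cite[Section 2.1]{BSV2}) to write the second $x$-derivative of $\Lp^f(\hf,g_{\alpha},h_{\alpha})$ as an explicit nonzero multiple of $\log^{++}(\kappa_p(f,g_{\alpha},h_{\alpha}))$, the $\beta\beta$-projection of the local diagonal-cycle class; it then proves directly that $\kappa_{\beta\beta}=0$ by adapting \cite[Section 4.3]{DR2.5} to the multiplicative setting, using \cite{CR} (cf.\ also \cite[Section 4]{GGMR}). You should replace the factorisation-plus-Leibniz step with this cohomological argument, or else give an actual proof of the factorisation that circumvents the disjoint-region issue---which the paper deliberately avoids doing.
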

We must say that in all these cases the proof is based on a factorization formula, so we expect that new ideas would be required for the proof in the general case.

{\bf (iii) The special value $\Lp^{g_{\alpha}}$.} In the last section, we discuss a way to connect the previous conjecture with the elliptic Stark conjecture of \cite{DLR1} when $\alpha_g \alpha_h = 1$. Recall that the conjecture predicts that
\begin{equation}\label{Lpg}
\Lp^{g_{\alpha}}(f,g_{\alpha},h_{\alpha}) = \frac{\log_p(P_{\beta \alpha})\log_p(Q_{\beta \beta})-\log_p(P_{\beta \beta})\log_p(Q_{\beta \alpha})}{\log_p(u_{g_{\alpha}})} \pmod{L^{\times}},
\end{equation}
with $u_{g_{\alpha}}$ being a Gross--Stark unit whose characterization we later recall.
In particular, it is expected that this unit could be expressed as a ratio of periods attached to weight one forms. These two periods, denoted by $\Omega_{g_{\alpha}}$ and $\Xi_{g_{\alpha}}$, will play a prominent role in the last part of the work. More precisely, in \cite[eq.\,(9)]{DR2.5}, the authors introduce a $p$-adic period, $\mathcal L_{g_{\alpha}}=\Omega_{g_{\alpha}}/\Xi_{g_{\alpha}}$ and conjecture (see Conjecture 2.1 of loc.\, cit.)
\begin{equation}\label{peri}
\mathcal L_{g_{\alpha}} = \log_p(u_{g_{\alpha}}).
\end{equation}

In Section 5 we consider the following three conjectures:
\begin{enumerate}
\item [(i)] the elliptic Stark conjecture for $\Lp^{g_{\alpha}}$;
\item [(ii)] the conjecture for the second derivative along the $f$-direction for $\Lp^f$, i.e., Conjecture \ref{conj1};
\item[(iii)] \cite[Conjecture 2.1]{DR2.5} about periods of weight one modular forms. Proposition \ref{evidence} can be seen as an extra piece of theoretical evidence towards this conjecture, showing that \[ \frac{\mathcal L_{g_{\alpha}}}{\mathcal L_{g_{\beta}}} = \frac{\log_p(u_{g_{\alpha}})}{\log_p(u_{g_{\beta}})}. \]
\end{enumerate}

Under certain non-vanishing hypothesis, we prove that if two of the previous conjectures are true, the third one automatically holds. In particular, we establish the following in Corollary \ref{perintro}.

\begin{theorem}
Let $g$ and $h$ be theta series of a quadratic field (either real or imaginary) where $p$ is inert. Write $V_{gh} = V_{\psi_1} \oplus V_{\psi_2}$, and assume that either $\psi_1$ or $\psi_2$ is a genus character. Then, under the given assumptions, the equality \eqref{peri} is equivalent to the elliptic Stark conjecture of Darmon, Lauder and Rotger \eqref{Lpg}.
\end{theorem}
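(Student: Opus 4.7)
My plan is to deduce the theorem as a corollary of the general three-way comparison among conjectures (i)--(iii) outlined in Section 5: under suitable non-vanishing hypotheses, any two of the three statements force the third. The proof then splits into two parts: first establishing that three-way comparison, and second verifying that conjecture (ii), i.e.\ Conjecture \ref{conj1}, holds unconditionally in the dihedral regime under consideration, so that the remaining two---(i) and (iii)---become equivalent.

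For the three-way comparison, I would combine the reciprocity law in part (i) of the introduction,
\[ \langle \log_{\BK}(\kappa_p'(f,g_{\alpha},h_{1/\beta})^g), \eta_f \otimes \omega_{g_{\alpha}} \otimes \omega_{h_{1/\beta}} \rangle = \mathcal L \cdot \Lp^{g_{\alpha}}(\hf,\hg,\hh)(x_0,y_0,z_0) \pmod{L^{\times}}, \]
with a parallel formula, proved by the same cycle-theoretic analysis, that identifies the left-hand pairing with $\partial^2 \Lp^f(\hf_x, g_{\alpha}, h_{1/\alpha})/\partial x^2 |_{x=x_0}$ modulo $L^{\times}$. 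Eliminating the derived class between these two identities produces
\[ \mathcal L \cdot \Lp^{g_{\alpha}}(f,g_{\alpha},h_{\alpha}) = \frac{\partial^2 \Lp^f(\hf_x,g_{\alpha},h_{1/\alpha})}{\partial x^2}\Big|_{x=x_0} \pmod{L^{\times}}. \]
Now the $\mathcal L$-invariant of \eqref{l1} equals the $p$-adic period $\mathcal L_{g_\alpha} = \Omega_{g_\alpha}/\Xi_{g_\alpha}$ of \cite[eq.\,(9)]{DR2.5} modulo $L^\times$; the elliptic Stark conjecture \eqref{Lpg} rewrites $\Lp^{g_\alpha}$ as a specific rank-two regulator divided by $\log_p(u_{g_\alpha})$; and Conjecture \ref{conj1} rewrites $\partial^2 \Lp^f/\partial x^2$ as the same rank-two regulator. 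Substituting any two of these descriptions into the boxed identity manifestly forces the third.

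To conclude the theorem, I would invoke case (a) of the Proposition preceding the statement---itself a consequence of \cite[Theorem A]{BSV2}---which asserts that Conjecture \ref{conj1} holds precisely in the dihedral regime considered here (CM or RM theta series with $p$ inert and one $\psi_i$ a genus character). With (ii) thus known, the three-way comparison collapses to the equivalence \eqref{Lpg} $\Leftrightarrow$ \eqref{peri}, which is the claim.

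The main obstacle, hidden in the phrase ``under the given assumptions'', is securing the non-vanishing conditions that legitimate the divisions in the above substitutions: the rank-two regulator $\log_p(P_{\beta\beta})\log_p(Q_{\alpha\alpha}) - \log_p(Q_{\beta\beta})\log_p(P_{\alpha\alpha})$, the Gross--Stark logarithm $\log_p(u_{g_\alpha})$, and the period $\Xi_{g_\alpha}$ must all be nonzero. In the dihedral decomposition $V_{gh}=V_{\psi_1}\oplus V_{\psi_2}$ these reduce to expected non-degeneracy statements about $p$-adic logarithms of Stark units of ring class fields of $K$, which one adopts as running hypotheses; absent them, the equivalence degenerates into a tautological $0=0$ rather than a meaningful identity.
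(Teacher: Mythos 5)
Your high-level outline matches the paper: establish a three-way comparison among conjectures (i), (ii), (iii) and then observe that (ii) is known in the dihedral regime by \cite[Theorem A]{BSV2} (Proposition~\ref{redu}), so (i)~$\Leftrightarrow$~(iii). That second step is correct. But your proposed proof of the three-way comparison does not work, for several concrete reasons. First, you import the derived-class reciprocity law $\langle \log_{\BK}(\kappa_p'(f,g_{\alpha},h_{1/\beta})^g),\ldots\rangle = \mathcal L \cdot \Lp^{g_{\alpha}}$ (Theorem~\ref{reclaw}), but that identity lives in Case~(b) ($\alpha_g\beta_h=1$), where the class $\kappa(f,g_\alpha,h_{1/\beta})$ vanishes and a derivative is forced. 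Corollary~\ref{perintro} is in Case~(a) ($\alpha_g\alpha_h=1$): there $\kappa(f,g_\alpha,h_\alpha)$ does \emph{not} vanish, the $g$-denominator of the Perrin–Riou map is regular, and no derived class or extra $\mathcal L$-factor appears in the $\Lp^{g_\alpha}$ reciprocity law. Second, the ``parallel formula'' you posit, identifying $\langle\log_{\BK}(\kappa_p'{}^{g}),\ldots\rangle$ with $\partial^2\Lp^f/\partial x^2$, is not available; what exists (Proposition~\ref{beta-beta}) relates $\partial^2\Lp^f/\partial x^2$ to $\log^{++}(\kappa_p(f,g_\alpha,h_\alpha))$, a \emph{different} projection of a \emph{different} (undifferentiated) class. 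As a result the ``eliminated'' identity $\mathcal L\cdot\Lp^{g_\alpha}=\partial^2\Lp^f/\partial x^2$ is too strong and not expected: plugging in the conjectural values gives a regulator in $(P_{\beta\beta},Q_{\beta\alpha},\ldots)$ on one side and a regulator in $(P_{\beta\beta},Q_{\alpha\alpha},\ldots)$ on the other, with no a priori relation. Third, the substitution step rests on the claim $\mathcal L \equiv \mathcal L_{g_\alpha} \pmod{L^\times}$, equating the analytic $\mathcal L$-invariant of \eqref{l1} (a derivative of Frobenius eigenvalues) with the period ratio $\Omega_{g_\alpha}/\Xi_{g_\alpha}$; this is nowhere asserted in the paper, and if it were known it would essentially short-circuit Conjecture~(iii) itself.

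What the paper does instead (Proposition~\ref{3coses}) is keep a single undifferentiated class $\kappa_p := \kappa_p(f,g_\alpha,h_\alpha)$, apply the two projections $\log^{-+}$ and $\log^{++}$ to it (translating (a) and (b) into statements about these two logarithms via the ordinary and improved reciprocity laws respectively), and exploit the structure theorem that $\kappa_p$ lies in the balanced local subspace. Under the non-vanishing of $\Reg_{g_\alpha}$ and $\Reg_f$, each logarithmic constraint pins $\kappa_p$ down to an explicit $L$-linear combination of (localizations of) $P$ and $Q$, with period normalizations $\Xi_{g_\alpha}\Omega_{h_\alpha}\log_p(u_{g_\alpha})$ in one case and $\Omega_{g_\alpha}\Omega_{h_\alpha}$ in the other; comparing the two normalizations yields $\mathcal L_{g_\alpha} \equiv \log_p(u_{g_\alpha})$. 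The point is that both constraints are imposed on the \emph{same} element, so one never has to equate the two $p$-adic $L$-values directly.
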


All the previous results are based on the interaction of the different arithmetic actors when $\alpha_g \alpha_h = 1$. The case where $\alpha_g \beta_h = 1$ is more subtle, since here the cohomology class $\kappa(f,g_{\alpha},h_{1/\beta})$ vanishes and we cannot extract the same arithmetic information. In any case, we expect that a similar result must hold in this setting. The reason is that the value of $\Lp^{g_{\alpha}}$ does not depend on the choice of a $p$-stabilization for $h$, and hence we can also give a conjectural expression for the derived cohomology class in terms of points, in complete analogy with \cite[Theorem B]{RiRo1}.

\begin{conj}
The following equality holds in $H_{\fin}^1(\mathbb Q, V_{fgh})$:
\begin{equation}
\kappa'(f,g_{\alpha},h_{1/\beta}) = \frac{\mathcal L}{\Xi_{g_{\alpha}} \cdot \Omega_{h_{1/\beta}}} \cdot \frac{\log_p(P_{\beta \beta}) \cdot Q - \log_p(Q_{\beta \beta}) \cdot P}{\log_p(u_{g_{\alpha}})} \pmod{L^{\times}}.
\end{equation}
\end{conj}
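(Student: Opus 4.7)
The plan is to mirror the strategy of \cite[Theorem B]{RiRo1} from the Beilinson--Flach setting, combining three ingredients already introduced in the paper: \textbf{(a)} the derived reciprocity law of Theorem \ref{reclaw}, which computes one particular pairing of the left-hand side against canonical differentials; \textbf{(b)} the elliptic Stark conjecture for $\Lp^{g_{\alpha}}$, rephrased in Case (b) by invoking that $\Lp^{g_{\alpha}}$ does not depend on the choice of $p$-stabilization of $h$; and \textbf{(c)} the period conjecture \eqref{peri} from \cite{DR2.5}, via which $\log_p(u_{g_{\alpha}})$ is identified with the ratio $\Omega_{g_{\alpha}}/\Xi_{g_{\alpha}}$.

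Under the rank--two hypothesis on $(E(H)\otimes V_{gh}^{\vee})^{G_{\mathbb Q}}$, I would first parameterize the derived class as $\kappa'(f,g_{\alpha},h_{1/\beta}) = \lambda\cdot P + \mu\cdot Q$ with $\lambda,\mu \in L$, using the Kummer-image identification with $H_{\fin}^1(\mathbb Q, V_{fgh})$. The differential $\omega_{g_{\alpha}} \otimes \omega_{h_{1/\beta}}$ pairs non-trivially only against the $(\beta_g,\beta_h)$-eigenspace of $V_{gh}^{\vee}$, so Theorem \ref{reclaw} yields the single linear relation
\[
\lambda \cdot \log_p(P_{\beta\beta}) + \mu \cdot \log_p(Q_{\beta\beta}) \equiv \mathcal L \cdot \Lp^{g_{\alpha}}(\hf,\hg,\hh)(x_0,y_0,z_0) \pmod{L^{\times}}.
\]
By the elliptic Stark conjecture the right-hand side becomes, up to $L^{\times}$, a two-by-two determinant of $p$-adic logarithms of components of $P$ and $Q$ divided by $\log_p(u_{g_{\alpha}})$; substituting \eqref{peri} replaces the denominator by $\Omega_{g_{\alpha}}/\Xi_{g_{\alpha}}$, which together with the period $\Omega_{h_{1/\beta}}$ reproduces the normalizing factor $\Xi_{g_{\alpha}}\cdot\Omega_{h_{1/\beta}}$ appearing in the statement.

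To promote this scalar identity into a determination of the class itself, a second independent equation is required. I would obtain this by either varying the auxiliary differential in $V_f^{\vee}$ (replacing $\eta_f$ by a different element of the appropriate filtered piece), or by exploiting the symmetry interchanging the two generators $P$ and $Q$ that is built into the construction of the derived class. Matching the two equations then pins down $(\lambda,\mu)$, up to $L^{\times}$, as proportional to $(-\log_p(Q_{\beta\beta}),\log_p(P_{\beta\beta}))$, yielding the antisymmetric combination $\log_p(P_{\beta\beta})\cdot Q - \log_p(Q_{\beta\beta})\cdot P$.

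The hardest step is ingredient (b), the elliptic Stark conjecture for $\Lp^{g_{\alpha}}$, which at present is established only in certain dihedral situations. A second delicate point, specific to Case (b), is that $\kappa(f,g_{\alpha},h_{1/\beta})$ vanishes prior to derivation, so the period comparison must be carried out at the level of the derived class; this is precisely where the product $\Xi_{g_{\alpha}} \cdot \Omega_{h_{1/\beta}}$ enters, and verifying the exact normalization will almost certainly require the analog, for diagonal cycle classes, of the careful period analysis of \cite[\S 4]{RiRo1}, together with the full strength of \eqref{peri}.
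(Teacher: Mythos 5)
The statement you are being asked about is a \emph{conjecture}: the paper does not prove it, it merely motivates it by assembling the derived reciprocity law of Theorem~\ref{reclaw} with the (conjectural) formula for $\Lp^{g_\alpha}$ in the case $\alpha_g\beta_h=1$. Your ingredients (a) and (b) are indeed the two pillars the paper uses, and the overall strategy of pairing the parameterization $\kappa'=\lambda P+\mu Q$ against differentials to extract scalar constraints is the same as what the paper carries out explicitly in Proposition~\ref{3coses} for Case~(a). To that extent your heuristic is in the right spirit.

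There are, however, two genuine discrepancies with the paper's reasoning. First, ingredient (c), the period conjecture $\mathcal L_{g_\alpha}=\log_p(u_{g_\alpha})$ from \cite{DR2.5}, is \emph{not} an input here. The quantities $\Xi_{g_\alpha}$ and $\Omega_{h_{1/\beta}}$ in the normalizing factor of the conjectural formula arise purely from unwinding the pairing with $\eta_f\otimes\omega_g\otimes\omega_h$ in Theorem~\ref{reclaw} (i.e. from the weight-one periods attached to $\eta_{g_\alpha}$ and $\omega_{h_{1/\beta}}$), while $\log_p(u_{g_\alpha})$ comes from the right-hand side of the elliptic Stark conjecture. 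They are kept as separate quantities. In fact Proposition~\ref{3coses} runs the logic in the opposite direction: (a) and (b) together \emph{imply} (c). Substituting $\eqref{peri}$ into the denominator, as you propose, would give an equivalent but differently written formula, and does not ``reproduce'' the factor $\Xi_{g_\alpha}\cdot\Omega_{h_{1/\beta}}$ in the way you describe.

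Second, and more importantly, your proposed mechanism for producing a second linear relation in $(\lambda,\mu)$ does not work. Replacing $\eta_f$ by another element of the corresponding filtered piece of $V_f^\vee$ buys nothing: that piece is one-dimensional and the choice of $\eta_f$ is dictated by the reciprocity law. And there is no symmetry interchanging $P$ and $Q$ (the target expression is antisymmetric in $P$ and $Q$, so an exchange symmetry would force it to vanish). What the paper actually uses at the analogous step in Proposition~\ref{3coses} is the fact, taken from \cite[Corollary~7.2]{BSV}, that the local class lies in the \emph{balanced} subspace of the four-step filtration~\eqref{four-step}; this forces the projection $\log^{--}$ of the difference $\tilde\kappa=\kappa-\kappa_0$ to vanish and, combined with the non-vanishing of $\Reg_{g_\alpha}(V_{gh})$, gives the second equation that pins down $(\lambda,\mu)$. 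In other words, the extra constraints come from projecting to the other Frobenius eigenspaces of $V_{gh}$ (the maps $\log^{--}$, $\log^{+-}$, $\log^{++}$), not from perturbing the $V_f$ differential or from a $P\leftrightarrow Q$ symmetry. Replacing your second step with the balancedness argument, adapted to the derived class, would bring your heuristic in line with the paper's.
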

Proceeding as in \cite{DR2.5} and \cite{RiRo2} we may also obtain expressions (at least conjecturally) for the three remaining cohomology classes, $\kappa'(f,g_{\beta},h_{1/\alpha})$, $\kappa(f,g_{\alpha},h_{1/\alpha})$ and $\kappa(f,g_{\beta},h_{1/\beta})$.

\vskip 12pt

{\bf Acknowledgements.}  It is a pleasure to thank Victor Rotger for his interest on this project and for a careful reading of the manuscript. I am also indebted to Daniel Barrera, Henri Darmon, and Giovanni Rosso for several enlightening conversations around this work. I sincerely thank the anonymous referee for a careful reading of the text, whose comments notably contributed to improve the exposition of this article.

The author has been supported by the European Research Council (ERC) under the European Union's Horizon 2020 research and innovation programme (grant agreement No. 682152). The author has also received financial support through ``la Caixa" Banking Foundation (grant LCF/BQ/ES17/11600010).

\section{Preliminaries}

This section aims to give an overview of the setting we present, concerned with triple product $p$-adic $L$-functions, and also recalls some known results in other related scenarios coming from the theory of elliptic curves and weight one modular forms.

\subsection{Hsieh's triple product p-adic L-function}

Fix an algebraic closure $\bar{\mathbb Q}$ of $\mathbb Q$. For a number field $K$, let $G_K := \Gal(\bar{\mathbb Q}/K)$ denote its absolute Galois group. Fix also an odd prime $p$ and an embedding $\bar{\mathbb Q} \hookrightarrow \bar{\mathbb Q}_p$.

The formal spectrum $\mathcal W = \Spf(\Lambda)$ of the Iwasawa algebra $\Lambda = \mathbb Z_p[[\mathbb Z_p^{\times}]]$ is called the weight space attached to $\Lambda$. The weight space is equipped with a distinguished class of {\it arithmetic points} $\nu_{s,\varepsilon}$ indexed by integers $s \in \mathbb Z$ and Dirichlet characters $\varepsilon: (\mathbb Z/p^r \mathbb Z)^{\times} \rightarrow \bar{\mathbb Q}^{\times}$ of $p$-power conductor. The point $\nu_{s,\varepsilon} \in \mathcal W$ is defined by the rule \[ \nu_{s,\varepsilon}(n)=\varepsilon(n)n^s. \]

Let $(\hf,\hg,\hh)$ be a triple of $p$-adic Hida families of tame levels $N_{\hf}$, $N_{\hg}$, $N_{\hh}$ and tame characters $\chi_{\hf}$, $\chi_{\hg}$, $\chi_{\hh}$. Let also $(\hf^*,\hg^*,\hh^*)$ denote the conjugate triple, and assume that $\chi_{\hf} \chi_{\hg} \chi_{\hh}=1$ (this is referred to as the self-duality assumption). Set $N = \lcm(N_{\hf},N_{\hg},N_{\hh})$, and suppose that $p \nmid N$.

Let $\Lambda_{\hf}$, $\Lambda_{\hg}$ and $\Lambda_{\hh}$ be the finite extensions of $\Lambda$ generated by the coefficients of the Hida families $\hf$, $\hg$ and $\hh$, respectively. The weight space attached to $\Lambda_{\hf}$ is $\mathcal W_{\hf} := \Spf(\Lambda_{\hf})$. Since $\Lambda_{\hf}$ is a finite flat algebra over $\Lambda$, there is a natural finite map \[ \pi: \mathcal W_{\hf} := \mathrm{Spf}(\mathcal W_{\hf})\stackrel{\mathrm{w}}{\lra} \cW, \] and we say that a point $x \in \mathcal W_{\hf}$ is arithmetic of weight $s$ and character $\varepsilon$ if $\pi(x) = \nu_{s,\varepsilon}$.

A point $x \in \mathcal W_{\hf}$ of weight $k \geq 1$ and character $\varepsilon$ is said to be crystalline if $\varepsilon = 1$ and there exists an eigenform $f_x^{\circ}$ of level $N$ such that $f_x$ is the ordinary $p$-stablization of $f_x^{\circ}$. We denote by $\mathcal W_{\hf}^{\circ}$ the set of crystalline arithmetic points of $\mathcal W_{\hf}$.

Finally, set $\Lambda_{{\bf fgh}}=\Lambda_{\hf} \hat \otimes \Lambda_{\hg} \hat \otimes \Lambda_{\hh}$ and let $\mathcal W_{{\bf fgh}}^{\circ} := \mathcal W_{\hf}^{\circ} \times \mathcal W_{\hg}^{\circ} \times \mathcal W_{\hh}^{\circ} \subset \mathcal W_{{\bf fgh}} = \Spf(\Lambda_{{\bf fgh}})$ be the set of triples of crystalline classical points, at which the three Hida families specialize to modular forms with trivial nebentype at $p$. This set admits the natural partition \[ \mathcal W_{{\bf fgh}}^{\circ} = \mathcal W_{{\bf fgh}}^{f} \sqcup \mathcal W_{{\bf fgh}}^{g} \sqcup \mathcal W_{{\bf fgh}}^{h} \sqcup \mathcal W_{{\bf fgh}}^{\bal}, \]
where
\begin{itemize}
\item $\mathcal W_{{\bf fgh}}^f$ denotes the set of points $(x,y,z) \in \mathcal W_{{\bf fgh}}^{\circ}$ of weights $(k,\ell,m)$ such that $k \geq \ell+m$.
\item $\mathcal W_{{\bf fgh}}^g$ and $\mathcal W_{{\bf fgh}}^h$ are defined similarly, replacing the role of $\hf$ by $\hg$ (resp. $\hh$).
\item $\mathcal W_{{\bf fgh}}^{\bal}$ is the set of balanced triples, consisting of points $(x,y,z)$ of weights $(k,\ell,m)$ such that each of the weights is strictly smaller than the sum of the other two.
\end{itemize}

%For $\phi \in \{\hf,\hg,\hh\}$ and a positive integer $N$ prime to $p$ and divisible by $N_{\phi}$, define the space of $\Lambda$-adic test vectors $S^0(N,\chi_{\phi},\mathbb I_{\phi})[\phi]$ to be the $\mathbb I_{\phi}$-submodule of $S^0(N,\chi_{\phi},\mathbb I_{\phi})$ generated by $\{\phi(q^d)\}$, as $d$ ranges over the positive divisors of $N/N_{\phi}$.

Recall from \cite[Section 1.4]{DR3} the notion of test vector. As proved in Section 3.5 of loc.\,cit. following \cite{Hs}, there is a canonical choice of test vectors for which there exists a {\it square-root} $p$-adic $L$-function \[ \Lp^f(\hf, \hg, \hh): \mathcal W_{{\bf fgh}} \rightarrow \mathbb C_p, \] characterized by an interpolation property relating its values at classical points $(x,y,z) \in \mathcal W_{{\bf fgh}}^f$ to the square root of the central critical value of Garrett's triple-product complex $L$-function $L(\hf_x,\hg_y,\hh_z,s)$ associated to the triple of classical eigenforms $(\hf_x,\hg_y,\hh_z)$. For the following proposition, let $\alpha_{\hf_x}$ and $\beta_{\hf_x}$ be the roots of the $p$-th Hecke polynomial of $\hf_x$, ordered in such a way that $\ord_p(\alpha_{\hf_x}) \leq \ord_p(\beta_{\hf_x})$. The following result is \cite[Proposition 5.1]{DR3}.

\begin{propo}\label{3l}
Fix test vectors $(\tilde{\hf},\tilde{\hg},\tilde{\hh})$ as in \cite[Section 3]{Hs}. Then $\Lp^f(\tilde{\hf},\tilde{\hg},\tilde{\hh})$ lies in $\Lambda_{{\bf fgh}}$ and for every $(x,y,z) \in \mathcal W_{{\bf fgh}}^f$ of weights $(k,\ell,m)$ we have \[ \Lp^f(\tilde{\hf},\tilde{\hg},\tilde{\hh})^2(x,y,z) = \frac{\mathfrak a(k,\ell,m)}{\langle \hf_x^{\circ},\hf_x^{\circ} \rangle^2} \cdot \mathfrak e^2(x,y,z) \times L(\hf_x^{\circ},\hg_y^{\circ},\hh_z^{\circ},c), \] where
\begin{enumerate}
\item $c = \frac{k+\ell+m-2}{2}$.
\item $\mathfrak a(k,\ell,m) = (2\pi i)^{-2k} \cdot \Big( \frac{k+\ell+m-4}{2} \Big) ! \cdot \Big( \frac{k+\ell-m-2}{2} \Big) ! \cdot \Big( \frac{k-\ell+m-2}{2} \Big) ! \cdot \Big( \frac{k-\ell-m}{2} \Big) !$,
\item $\mathfrak e(x,y,z) = \mathcal E(x,y,z)/\mathcal E_0(x) \mathcal E_1(x)$ with
    \begin{eqnarray}
      \nonumber \mathcal E_0(x) &:=& 1-\chi_f^{-1}(p) \beta_{\hf_x}^2p^{1-k}, \\
      \nonumber \mathcal E_1(x) &:=& 1-\chi_f(p)\alpha_{\hf_x}^{-2}p^{k-2}, \\
      \nonumber \mathcal E(x,y,z) &:=& \Big(1-\chi_f(p) \alpha_{\hf_x}^{-1} \alpha_{\hg_y}\alpha_{\hh_z}p^{\frac{k-\ell-m}{2}} \Big) \times \Big(1-\chi_f(p) \alpha_{\hf_x}^{-1} \alpha_{\hg_y}\beta_{\hh_z}p^{\frac{k-\ell-m}{2}} \Big)\\ \nonumber
       & & \times \Big(1-\chi_f(p) \alpha_{\hf_x}^{-1} \beta_{\hg_y}\alpha_{\hh_z}p^{\frac{k-\ell-m}{2}} \Big) \times \Big(1-\chi_f(p) \alpha_{\hf_x}^{-1} \beta_{\hg_y}\beta_{\hh_z}p^{\frac{k-\ell-m}{2}} \Big).
    \end{eqnarray}
\end{enumerate}
\end{propo}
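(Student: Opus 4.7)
This proposition is the main technical input from Hsieh's construction, so the plan is to outline the strategy rather than reproduce the computations in detail. There are four conceptual steps: a classical integral representation, a factorization of global period into local zeta integrals, an explicit local computation at $p$, and a $p$-adic interpolation over weight space.

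First I would invoke Garrett's integral representation, which realizes $L(\hf_x^{\circ},\hg_y^{\circ},\hh_z^{\circ},c)$ as the Petersson pairing of $\hf_x^{\circ} \otimes \hg_y^{\circ} \otimes \hh_z^{\circ}$ against the diagonal restriction of a Siegel Eisenstein series on $\mathrm{GSp}_6$; combined with Ichino's formula, one obtains an identity expressing the square of an adelic trilinear period as the central $L$-value times a finite product of local zeta integrals. The denominator $\langle \hf_x^{\circ},\hf_x^{\circ} \rangle^2$ enters here through the normalization of the Petersson inner product used to project back to the triple product.

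Next I would separate archimedean and non-archimedean contributions. The archimedean zeta integral, governed by the Gamma factors of the Garrett $L$-function and by the weight-raising operators needed to realize the $f$-dominant Garrett integral from classical holomorphic test vectors, produces after elementary manipulation the factor $(2\pi i)^{-2k}$ together with the product of factorials comprising $\mathfrak a(k,\ell,m)$. At the prime $p$ I would carry out Hsieh's explicit local calculation, using the ordinary test vectors at level $Np$ prescribed in \cite[Section 3]{Hs}. The resulting local integral factorizes as a ``numerator'' piece, coming from the four possible Frobenius eigenvalue combinations $\alpha_{\hf_x}^{-1}(\alpha,\beta)_{\hg_y}(\alpha,\beta)_{\hh_z}p^{(k-\ell-m)/2}$ and giving $\mathcal E(x,y,z)$, divided by ``denominator'' factors $\mathcal E_0(x) \mathcal E_1(x)$ that arise from removing the ordinary projector on the $\hf$-side and that depend only on $\hf_x$. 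All other finite places are handled by the choice of local test vectors and contribute only to the overall normalization in $L^{\times}$.

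Finally, the $p$-adic interpolation property would follow by performing the same construction uniformly in the Hida-theoretic parameters: Hida's theory of $\Lambda$-adic ordinary forms promotes both the Eisenstein series and the ordinary projectors to $\Lambda$-adic objects, so the trilinear period constructed from the test vectors $(\tilde{\hf},\tilde{\hg},\tilde{\hh})$ deforms to an element of $\Lambda_{\hf\hg\hh}$, and the interpolation identity is then an identity of functions on $\mathcal W_{\hf\hg\hh}$ whose agreement on the Zariski-dense set $\mathcal W_{{\bf fgh}}^{f}$ has been checked in the previous step. The main obstacle is the local calculation at $p$: one must choose test vectors that are simultaneously compatible with the ordinary projector on the $\hf$-side and with the two $p$-stabilizations of $\hg_y$ and $\hh_z$, and it is precisely the clean factorization $\mathfrak e = \mathcal E / (\mathcal E_0 \mathcal E_1)$ that makes the formula symmetric enough to be recognized as coming from a single $\Lambda$-adic object; verifying this clean factorization is essentially the heart of Hsieh's argument.
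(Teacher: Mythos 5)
The paper gives no proof of this proposition: it is stated as a direct citation of \cite[Proposition 5.1]{DR3}, which in turn rests on Hsieh's construction in \cite{Hs}. So there is no argument in the paper against which to check yours step by step; what can be assessed is whether your sketch faithfully reflects the strategy actually used in the cited sources.

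On that score your outline is accurate at the level of granularity you aim for. The four stages you identify --- Garrett's pullback integral on $\mathrm{GSp}_6$ combined with an Ichino-type period formula, the Euler factorization into local zeta integrals with the Petersson norm $\langle \hf_x^{\circ},\hf_x^{\circ}\rangle^2$ entering through the normalization, the explicit local computation at $p$ with ordinary test vectors producing the ratio $\mathcal E/(\mathcal E_0\mathcal E_1)$ (numerator from the four Frobenius-eigenvalue pairings on $\hg_y$ and $\hh_z$ twisted against $\alpha_{\hf_x}^{-1}$, denominator from the ordinary projector on the $\hf$-side), and the promotion of the whole construction to a $\Lambda$-adic pairing giving an element of $\Lambda_{\hf\hg\hh}$ --- are precisely the structure of Hsieh's argument and of its recapitulation in Darmon--Rotger. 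You also correctly flag the heart of the matter: the delicate part is the choice of local test vector at $p$ compatible with all three ordinary stabilizations, and verifying the clean shape of $\mathfrak e$; the archimedean factor $(2\pi i)^{-2k}\cdot(\text{factorials})$ is elementary once the Gamma factors of the Garrett $L$-function are matched to the critical value at $c=(k+\ell+m-2)/2$. Two small caveats worth keeping in mind if you were to expand this into a full proof: (i) Hsieh's normalization is sensitive to the choice of test vectors away from $p$, and the statement ``up to $L^{\times}$'' hides genuine work in making the local integrals at primes dividing $N$ nonvanishing constants; (ii) the passage from the dense set of $f$-dominant classical points to a $\Lambda$-adic equality uses that $\Lambda_{\hf\hg\hh}$ is reduced and that $\mathcal W_{{\bf fgh}}^{f}$ is Zariski-dense, both of which need to be invoked explicitly. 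None of this undermines your sketch; it is consistent with the literature and correctly identifies what makes the local calculation at $p$ the crux.
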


For simplicity, we fix once and for all these test vectors and remove their dependence in the notation.

There is an ostensible parallelism between this $p$-adic $L$-function and the so-called Hida--Rankin $p$-adic $L$-function attached to a pair of Hida families $(\hg,\hh)$, but where the cyclotomic variable $s$ is allowed to move freely. It may be instructive to keep in mind this analogy for the subsequent results.

Some of the easiest cases to understand these triple product $p$-adic $L$-functions arise when the representation attached to $V_{gh}$ is irreducible. In particular, assume that $g$ is a weight one theta series attached to a quadratic field $K$ (either real of imaginary) where $p$ remains inert. Then, $V_{gh} = V_{\psi_1} \oplus V_{\psi_2}$, and under the assumption that at least one between $\psi_1$ or $\psi_2$ is a genus (quadratic) character, the works \cite{BSV2} and \cite{DR3} show that
\begin{equation}
\Lp^f(\hf,g,h)^2 = \mathfrak f(k) \cdot L_p(\hf/K, \psi_1) \cdot L_p(\hf/K,\psi_2),
\end{equation}
where $\mathfrak f(k)$ is a bounded analytic function on $\Lambda_{\hf}$ such that $\mathfrak f(x_0) \in L^{\times}$.
Here, $\Lp(\hf/K,\psi)$ is the two-variable $p$-adic $L$-function attached to a Hida family $\hf$ and a character $\psi$ of a quadratic field.

As a word of caution, observe that there are {\it three} different $p$-adic $L$-functions, depending on the region of classical interpolation (associated to the dominant weight).

\subsection{Improved p-adic L-functions}

It is a natural phenomenon in the study of $p$-adic $L$-functions that some of the Euler factors arising in the interpolation process are analytic along a subvariety of the weight space. When this happens, one is tempted to define {\it improved} $p$-adic $L$-functions, that is, functions over the corresponding subvariety characterized by the same interpolation property, but with these Euler factors removed. This is a quite well-known phenomenon, which dates back to Greenberg--Stevens \cite{GS} and their study of the Mazur--Kitagawa $p$-adic $L$-function. This was one of the key ingredients in the proof of our main results in \cite{RiRo1} and we would like to stress the limitations of the method in this triple product setting. The interest of this study is that we also want to discuss later on its applicability from the Euler system side in order to construct {\it improved cohomology classes}.

For the sake of simplicity, assume that $\chi_f$ is trivial. In the setting of triple product $p$-adic $L$-functions we have just discussed, one of the Euler factors appearing in the interpolation property of $\Lp^f(\hf,\hg,\hh)$ is \[ 1-\frac{\alpha_{\hg_y} \alpha_{\hh_z}}{\alpha_{\hf_x}}p^{\frac{k-\ell-m}{2}}, \] which is an Iwasawa function along the surface $\mathcal S_{k=\ell+m}$ defined by \[ \mathcal S_{k=\ell+m}= \{ (x,y,z) \in \mathcal W_{\hf}^{\circ} \times \mathcal W_{\hg}^{\circ} \times \mathcal W_{\hh}^{\circ} \text{ such that } k = \ell+m \}. \]

The definitions given in \cite[Def.\,4.4]{DR1} can be adapted to yield an {\em improved} $p$-adic $L$-function $\Lp(\hf,\hg,\hh)^*$ on $\mathcal S_{k=\ell+m}$, by replacing the family $\hh \times d^t \hg^{[p]}$ with the family $\hh \times \hg$, whose coefficients vary analytically because $t=0$ on $\mathcal S_{k=\ell+m}$.

\begin{propo}\label{propo-imp}
There exists an analytic $p$-adic $L$-function over the surface $\mathcal S_{k=\ell+m}$, denoted by $\Lp^f(\hf,\hg,\hh)^*$, such that the following equality holds in $\mathcal S_{k=\ell+m}$:
\begin{equation}\label{eq-imp}
\Lp^f(\hf,\hg,\hh) = (1-\alpha_{\hf}^{-1} \alpha_{\hg} \alpha_{\hh}) \Lp^f(\hf,\hg,\hh)^*.
\end{equation}
\end{propo}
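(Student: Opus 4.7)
The plan is to mimic the construction of the improved $p$-adic $L$-function carried out in \cite[Def.\,4.4]{DR1} for the Hida--Rankin setting, adapting it to the three-variable framework of Hsieh \cite{Hs}. Recall that $\Lp^f(\hf,\hg,\hh)$ is produced by pairing $\hf^*$ (via Hida's ordinary duality) against an ordinary $\Lambda$-adic form built from the product $\hh \cdot d^t\hg^{[p]}$, where $t=(k-\ell-m)/2$, $d$ is the Serre operator (which extends $p$-adic analytically only after $p$-depletion), and $\hg^{[p]}$ denotes the $p$-depletion of $\hg$. The first step is to observe that on the surface $\mathcal S_{k=\ell+m}$ one has $t\equiv 0$, so $d^t$ becomes the identity; this suggests defining $\Lp^f(\hf,\hg,\hh)^*$ by running Hsieh's construction verbatim over $\mathcal S_{k=\ell+m}$, but replacing $\hg^{[p]}$ with the undepleted family $\hg$. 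Since no Serre operator of non-integer order is involved, the resulting expression produces an analytic function on $\mathcal S_{k=\ell+m}$ directly, no depletion being needed to ensure convergence.

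To establish the factorization \eqref{eq-imp}, I would use the identity $\hg^{[p]}=(1-\alpha_{\hg}V_p)\hg$, valid on a $U_p$-ordinary family with eigenvalue $\alpha_{\hg}$, together with the Hida-theoretic adjunction
$$\langle \hf^*, e_{\ord}(\hh\cdot V_p\hg)\rangle \;=\; \alpha_{\hf}^{-1}\alpha_{\hh}\cdot \langle \hf^*, e_{\ord}(\hh\cdot \hg)\rangle,$$
which encodes the fact that $V_p$ is a one-sided inverse of $U_p$ and that pairing against a $U_p$-ordinary form $\hf^*$ (respectively, multiplying by a $U_p$-ordinary form $\hh$) converts the extra $V_p$ into the scalar $\alpha_{\hf}^{-1}$ (respectively, $\alpha_{\hh}$). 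Substituting back into Hsieh's expression for $\Lp^f(\hf,\hg,\hh)$ yields
$$\Lp^f(\hf,\hg,\hh)\big|_{\mathcal S_{k=\ell+m}} \;=\; (1-\alpha_{\hf}^{-1}\alpha_{\hg}\alpha_{\hh})\cdot \Lp^f(\hf,\hg,\hh)^*,$$
which is the claimed formula. Note that this matches exactly the Euler factor $\bigl(1-\chi_f(p)\alpha_{\hf_x}^{-1}\alpha_{\hg_y}\alpha_{\hh_z}p^{(k-\ell-m)/2}\bigr)$ appearing in Proposition \ref{3l}, evaluated on the surface $k=\ell+m$ with $\chi_f=1$; the remaining three Euler factors in the product $\mathcal E(x,y,z)$ survive in the improved $L$-function because $p$-depletion only removes the Euler factor tied to the ordinary $(\alpha_{\hg},\alpha_{\hh})$-stabilization.

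The main technical obstacle is verifying that Hsieh's specific test vectors and pairing normalizations---the ones fixed once and for all after Proposition \ref{3l}---are compatible with the adjunction $U_p\leftrightarrow V_p$ without introducing spurious constants. This amounts to tracking the formulas of \cite[Section 3]{Hs} and matching them against the Rankin-type computation of \cite[Section 4]{DR1}; the additional variable of the triple-product setting contributes no new conceptual difficulty beyond what was already handled in \cite{RiRo1}, although one must take care to identify which of the three $U_p$-eigenvalues plays the role of which scalar, and to check that no additional interpolation factor appears from the level-raising $V_p$ moving between the spaces of forms involved in Hida's duality.
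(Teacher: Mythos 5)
Your proposal is correct and follows essentially the same route as the paper, which itself points to \cite[Def.\,4.4]{DR1} for the construction (replacing $\hh \times d^t\hg^{[p]}$ by $\hh\times\hg$, since $t=0$ on $\mathcal S_{k=\ell+m}$) and then defers the verification to \cite[Proposition 8.2]{BSV}. You have simply unpacked the standard $V_p$--$U_p$ adjunction argument that underlies the cited reference, correctly identifying that the resulting Euler factor is $1-\alpha_{\hf}^{-1}\alpha_{\hg}\alpha_{\hh}$ and that the remaining factors of $\mathcal E(x,y,z)$ survive.
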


\begin{proof}
This follows from the proof of \cite[Proposition 8.2]{BSV} and the discussion after it.
\end{proof}

We point out that the improved $p$-adic $L$-function we have considered, $\Lp^f(\hf,\hg,\hh)^*$, interpolates classical $L$-values, in the same way than $\Lp^f(\hf,\hg,\hh)$, but with the vanishing Euler factor removed. Therefore, its value at $(x_0,y_0,y_0)$ is given by an explicit non-zero multiple of the square root of the algebraic part of $L(f,g,h,1)$. In particular, $L(f,g,h,1) \neq 0$ if and only if the improved $p$-adic $L$-function does not vanish at $(x_0,y_0,y_0)$.

Observe however that we may also consider other Euler factors. Take for example \[ 1-\frac{\bar \chi_h(p) \alpha_{\hh_z}}{\alpha_{\hg_y}\alpha_{\hf_x}}p^{\frac{k+\ell-m-2}{2}}, \] which is analytic along $k+\ell=m+2$.

We would expect that one can establish that these factors (each along its respective region) divide the $p$-adic $L$-function and yield other improved $p$-adic $L$-functions satisfying {\it mild} analytic and interpolation properties.

\subsection{Exceptional zeros and L-invariants}

The situations we study in this article are mostly concerned with the so-called exceptional zero phenomenon. We now recall several results which appear in the literature around that, mainly in \cite{GS}, \cite{Ven} and \cite{RiRo1}. As anticipated before, the point is that the $\mathcal L$-invariant governing the arithmetic of $V_f \otimes V_{gh}$ is a combination of the $\mathcal L$-invariants attached to $f$ and the adjoint of $g$ (or the adjoint of $h$, according to the {\it direction} we choose). For a brief summary of the usual definition and the main properties of the adjoint representation in this scenario, see \cite[Section 1.2]{DLR1}.

The aim of this section is to give an arithmetic description of the different $\mathcal L$-invariants that later appear in the setting of triple products, to have a complete description of our picture.

Let us define, to ease notations,
\begin{equation}
\mathcal L(\ad^0(g_{\alpha})) := \frac{\alpha_g'}{\alpha_g}, \qquad \mathcal L(E) := \frac{\alpha_f'}{\alpha_f},
\end{equation}
where the derivative is taken along the unique Hida family passing through $g_{\alpha}$ and $f$, respectively, and then evaluating at the points corresponding to $g_{\alpha}$ and $f$.

{\bf I. The $\mathcal L$-invariant of the adjoint of a modular form.} One of the main results of \cite{RiRo1} was the computation of the $\mathcal L$-invariant associated to the adjoint of a modular form.

As shown in \cite[Lemma 1.1]{DLR2}, we have \[ \dim_L (\mathcal O_H^\times \otimes \ad^0(g))^{G_{\mathbb Q}} = 1, \quad \dim_L (\mathcal O_H[1/p]^\times/p^{\mathbb Z} \otimes \ad^0(g))^{G_{\mathbb Q}} = 2. \]
Fix a generator $u$ of $ (\mathcal O_H^\times \otimes \ad^0(g))^{G_{\mathbb Q}}$ and also an element $v$ of $(\mathcal O_H^{\times}[1/p]^{\times} \otimes \ad^0(g))^{G_{\mathbb Q}}$ such that $\{ u, v\}$ is a basis of $(\mathcal O_H[1/p]^\times \otimes \ad^0(g))^{G_{\mathbb Q}}$. The element $v$ may be chosen to have $p$-adic valuation $\ord_p (v)=1$, and we do so. Viewed as a $G_{\Q_p}$-module, $\ad^0(g)$ decomposes as $\ad^0(g) = L^1 \oplus L^{\alpha/\beta} \oplus L^{\beta/\alpha}$, where all the summands are 1-dimensional subspaces characterized by the property that the arithmetic Frobenius $\Frob_p$ acts on it with eigenvalue $1$, $\alpha/\beta$ and $\beta/\alpha$, respectively. Let $H_p$ denote the completion of $H$ in $\bar{\mathbb Q}_p$  and let
\[ u_1, \,\, u_{\alpha/\beta},  \,\,  u_{\beta/\alpha},  \,\,  v_1,  \,\,  v_{\alpha/\beta},  \,\,  v_{\beta/\alpha} \in H_p^\times \otimes_{\mathbb Q} L \quad \pmod{L^{\times}} \]
denote the projection of the elements $u$ and $v$ in $(H_p^\times \otimes \ad^0(g))^{G_{\mathbb Q_p}}$ to the above lines. By construction we have $u_1, v_1 \in \Q_p^\times$ and
\[ \Frob_p(u_{\alpha/\beta}) = \frac{\beta}{\alpha} u_{\alpha/\beta}, \quad \Frob_p(v_{\alpha/\beta}) = \frac{\beta}{\alpha} v_{\alpha/\beta}, \quad \Frob_p(u_{\beta/\alpha}) = \frac{\alpha}{\beta} u_{\beta/\alpha}, \quad \Frob_p(v_{\beta/\alpha}) = \frac{\alpha}{\beta} v_{\beta/\alpha}. \]

Let \[ \log_p: H_p^\times \otimes L \lra H_p \otimes L \]
denote the usual $p$-adic logarithm.

Then, one the main results of \cite{RiRo1} was the computation of $\mathcal L(\ad^0(g_{\alpha}))$, which can be expressed as
\begin{equation}\label{inv2}
 \mathcal L(\ad^0(g_{\alpha})) = - \frac{\log_p(v_1)\log_p(u_{\alpha/\beta})-\log_p(u_1)\log_p(v_{\alpha/\beta})}{2\ord_p(v_1) \cdot \log_p(u_{\alpha/\beta})}.
\end{equation}

{\bf II. The $\mathcal L$-invariant of an elliptic curve (rank 0).} In \cite{GS}, the authors prove a conjecture of Mazur, Tate and Teitelbaum \cite{MTT} expressing the quantity $L_p(E,1)$ in terms of the derivative of $L(E,1)$ when the rank is zero. As a consequence of this, they show that an elliptic curve with split multiplicative reduction at $p$ satisfies
\begin{equation}\label{inv1}
\mathcal L(E) = -\frac{\log_p(q_E)}{2\ord_p(q_E)},
\end{equation}
where $q_E$ is Tate's uniformizer for the elliptic curve $E$. We write $L_p(\hf)(x,s)$ for the usual two-variable Mazur--Kitagawa $p$-adic $L$-function, and $x_0$ for the weight two point satisfying $\hf_{x_0}=f$, with $f$ the modular form attached to $E$ by modularity.

As recalled for instance in the discussion of \cite[Remark 1.13]{BD}, there exists an improved $p$-adic $L$-function along $s=1$, that we denote here as $L_p^*(\hf)(x)$ and which is characterized by \[ L_p(\hf)(x,1) = (1-a_p(\hf_x)^{-1}) \cdot L_p^*(\hf)(x). \]
Observe that in a rank 0 situation $L_p^*(\hf_{x_0})$ is a non-zero algebraic number which agrees (up to constant) with the algebraic part of the classical $L$-value.

{\bf III. The $\mathcal L$-invariant of an elliptic curve (rank 1).} In a rank 1 situation, Venerucci relates the second derivatives of the Mazur--Kitagawa $p$-adic $L$-function with certain heights of Heegner points. Observe that in this setting, $L_p(\hf)(x_0,1) = 0$ and the same happens for its first derivatives. To determine the second order derivatives, he recasts in \cite{Ven} to the theory of Selmer complexes and Nekov\'a\v{r}'s Selmer groups, as introduced in \cite{Nek}.

Following the conventions used in loc.\,cit., let $\tilde H_{\fin}^1$ be Nekovar's extended Selmer group. It is a $\mathbb Q_p$-module, equipped with a natural inclusion of the extended Mordell-Weil group of $E$, that we denote by $E^{\dag}(\mathbb Q) \otimes \mathbb Q_p$. In general, \[ \tilde H_{\fin}^1(\mathbb Q, V_p(E)) = H_{\fin}^1(\mathbb Q, V_p(E)) \oplus \mathbb Q_p \cdot q_E, \] where $H_{\fin}^1(\mathbb Q, V_p(E))$ is the Bloch--Kato $p$-adic Selmer group. Using Nekovar and Venerucci's results, there is a canonical $\mathbb Q_p$-bilinear form \[ \langle \cdot,\cdot \rangle: \tilde H_{\fin}^1(\mathbb Q, V_p(E)) \otimes_{\mathbb Q_p} \tilde H_{\fin}^1(\mathbb Q, V_p(E)) \rightarrow I/I^2, \] where $I$ stands for the augmentation ideal of the cyclotomic Iwasawa algebra, and which may be thought as the ring of functions vanishing at the point $(x,s)=(x_0,1)$, that with a slight abuse of notation we denote by $(2,1)$. This is the so-called {\em height-weight} pairing, which decomposes as \[ \langle \cdot,\cdot \rangle = \langle \cdot,\cdot \rangle_p^{\cyc} \cdot \{s-1\} + \langle \cdot,\cdot \rangle_p^{\wt} \cdot \{k-2\}, \] where $\langle \cdot,\cdot \rangle_p^{\cyc}$ and $\langle \cdot,\cdot \rangle_p^{\wt}$ are canonical $\mathbb Q_p$-valued pairings on the extended Selmer group. Finally, the Schneider height is defined by \[ \langle \cdot,\cdot \rangle_p^{\Sch} = \langle x,y \rangle_p^{\cyc} - \frac{\log_p(\res_p(x)) \cdot \log_p(\res_p(y))}{\log_p(q_E)}, \] where $\res_p(x)$ is the localization-at-$p$ map. The following result provides expressions for the second derivative of $L_p(\hf)$ along different directions of the weight space.

\begin{propo}\label{3ven}
The following formulas hold, where $P$ is a generator of the Mordell--Weil group $E(\mathbb Q)$.
\begin{enumerate}
\item[(a)] \[ \frac{d^2 L_p(\hf)(k,k/2)}{dk^2} \Big|_{k=2} = \log_E(P)^2 \pmod{L^{\times}}; \]
\item[(b)] \[ \frac{d^2 L_p(\hf)(k,1)}{dk^2} \Big|_{k=2} = \mathcal L(E) \cdot \langle P,P \rangle^{\cyc} \pmod{L^{\times}}; \]
\item[(c)] \[ \frac{d^2 L_p(\hf)(x_0,s)}{ds^2} \Big|_{s=1} = \mathcal L(E) \cdot \langle P,P \rangle^{\Sch} \pmod{L^{\times}}. \]
\end{enumerate}
\end{propo}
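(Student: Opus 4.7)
The strategy is to analyze $L_p(\hf)(k,s)$ as a function on the two-dimensional local ring at $(x_0,1)$ modulo $I^3$, where $I$ is the augmentation ideal of that ring. The fact that $E$ has split multiplicative reduction (so $a_p(f)=1$) produces the exceptional Mazur--Tate--Teitelbaum factor $(1-a_p(\hf_x)^{-1}p^{s-1})$, which vanishes at $(x_0,1)$; the rank~1 hypothesis gives a second source of vanishing coming from $L(E,1)=0$. Together these force $L_p(\hf)\in I^2$, and the three formulas (a)--(c) are precisely the three coefficients in the Taylor expansion, paired against the three natural directions $(k,k/2)$, $(k,1)$ and $(x_0,s)$.

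The key technical input is the \emph{height--weight pairing} $\langle\cdot,\cdot\rangle:\tilde H_{\fin}^1\otimes\tilde H_{\fin}^1\to I/I^2$ constructed by Nekov\'a\v{r} using Selmer complexes and refined by Venerucci. A generalized Perrin-Riou-style reciprocity law identifies $L_p(\hf)$, modulo $I^3$, with the image of $\langle P,P\rangle$ under this pairing, where $P$ is the generator of $E(\mathbb Q)$ viewed inside the extended Mordell--Weil group $E^\dagger(\mathbb Q)\otimes\mathbb Q_p \hookrightarrow \tilde H_{\fin}^1(\mathbb Q,V_p(E))$. Decomposing $\langle\cdot,\cdot\rangle=\langle\cdot,\cdot\rangle_p^{\cyc}\{s-1\}+\langle\cdot,\cdot\rangle_p^{\wt}\{k-2\}$ then isolates the three coefficients.

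To derive (b), use the improved factorization $L_p(\hf)(x,1)=(1-a_p(\hf_x)^{-1})L_p^*(\hf)(x)$ recalled in the previous subsection. The Greenberg--Stevens computation (see \eqref{inv1}) shows that the first factor has derivative $\mathcal L(E)$ at $x_0$ up to a non-zero constant, while $L_p^*(\hf)$ vanishes simply at $x_0$ and its derivative equals $\langle P,P\rangle_p^{\cyc}$ by the rank 1 Perrin-Riou/Nekov\'a\v{r} formula for the improved $L$-function. For (c), the symmetric analysis along the cyclotomic direction contributes the same factor $\mathcal L(E)$ coming from the Mazur--Tate--Teitelbaum Euler factor at $s=1$, but on the arithmetic side $P$ must be projected onto $H_{\fin}^1$ rather than $\tilde H_{\fin}^1$, which replaces $\langle P,P\rangle_p^{\cyc}$ by the Schneider correction $\langle P,P\rangle_p^{\Sch}=\langle P,P\rangle_p^{\cyc}-\log_p(\res_p(P))^2/\log_p(q_E)$. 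Finally (a) is the Bertolini--Darmon $p$-adic Gross--Zagier formula along the central critical line: the restriction of $L_p(\hf)$ to $s=k/2$ admits no exceptional zero factor contributing to the second derivative, and its leading term is given by the square of the formal group logarithm $\log_E(P)^2$.

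The main obstacle is constructing the height--weight pairing and proving that it really computes $L_p(\hf)\pmod{I^3}$; this is the deep part of Venerucci's work and is what allows a uniform treatment of the three directional derivatives. The subtlety is that the double zero of $L_p(\hf)$ at $(x_0,1)$ has two distinct origins (one algebraic, from rank; one automorphic, from the exceptional Euler factor), and Nekov\'a\v{r}'s Selmer complex formalism is precisely what disentangles them so that the factor $\mathcal L(E)$ appears with the correct normalization in both (b) and (c), while disappearing entirely along the central line where no exceptional Euler factor intervenes.
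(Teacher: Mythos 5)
The paper's own proof of this proposition is a one-line citation: part (a) to the main theorem of Bertolini--Darmon \cite{BD}, parts (b) and (c) to Theorems~D and~E of Venerucci \cite{Ven}. Your proposal instead attempts to sketch the \emph{mechanism} behind those theorems rather than invoking them, so formally it is a different route: you unpack the $I^3$-expansion, the height--weight pairing, and the improved-factor Leibniz argument. That is a reasonable thing to do, and the conceptual skeleton you describe (Nekov\'a\v{r} Selmer complexes, the decomposition $\langle\cdot,\cdot\rangle=\langle\cdot,\cdot\rangle^{\cyc}\{s-1\}+\langle\cdot,\cdot\rangle^{\wt}\{k-2\}$, the two independent sources of the double zero) is indeed the one Venerucci uses. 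However, the sketch has two imprecisions worth flagging. First, $\langle P,P\rangle$ lives in $I/I^2$, while $L_p(\hf)\bmod I^3$ is a class in $I^2/I^3$, so the reciprocity law cannot ``identify'' one with the other as stated; the correct statement involves the determinant of the $2\times 2$ height--weight pairing on the full extended Selmer group $\tilde H^1_{\fin}$, whose second basis vector is the Tate class $q_E$ --- it is precisely the pairings involving $q_E$ that produce the $\mathcal L(E)$ factor on the algebraic side. Second, the step in your derivation of (b) that identifies $\frac{d}{dk}L_p^*(\hf)(x)\big|_{x=x_0}$ with $\langle P,P\rangle^{\cyc}$ is not a direct consequence of a Perrin-Riou formula; it is itself a substantial part of Venerucci's work, and presenting it as a known input undercounts the depth of the theorem. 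Neither of these affects the truth of the proposition --- the paper already defers to the literature --- but they matter if the goal is to sketch a self-contained argument.
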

\begin{proof}
The first part follows from the main result of \cite{BD}, and the other two are \cite[Theorems D and E]{Ven}. We refer the reader to loc.\,cit. for a definition of the corresponding pairings.
\end{proof}

{\bf IV. Results beyond modular forms of weight 2.} The main result of Bertolini and Darmon \cite{BD} was generalized by Seveso \cite{Se1} to modular forms of even weight. Let us recall here his main result for the sake of completeness and to illustrate that most of our results generalize to the situation of weights $(k,1,1)$, by replacing the points over the elliptic curve by the corresponding Heegner cycles. Let $f_k \in S_k(N)$, where $N=pN^+N^-$ and $N^-$ is the squarefree product of an odd number of prime factors. The modular form corresponds, via the Jacquet-Langlands correspondence, to a modular form on a certain Shimura curve $X=X_{N^+,pN^-}$ uniformized by the $p$-adic upper half-plane. In this framework, Iovita and Spiess \cite{IS} constructed a Chow motive $\mathcal M_{k-2}$ attached to modular forms on $X$. Let $m=k/2-1$.

We fix $K/\mathbb Q$ a quadratic imaginary field extension, of discriminant $D_K$ prime to $pN$, such that $N^+$ is a product of primes that are split in $K$, while $pN^-$ is a product of primes that are inert in $K$; we further fix an order of $K$ of conductor $c$ prime to $ND_K$. Hence, one may consider a higher weight analogue of Heegner points, the Heegner cycles $y_{\psi}^{(n)} \in \CH^{m+1}(\mathcal M_n)$ attached to a character $\psi$. The $p$-adic \'etale Abel-Jacobi map takes the form \[ \AJ_p: \CH^{m+1}(\mathcal M_n) \rightarrow M_k^{\vee}. \]

In the following result, the Mazur--Kitagawa $p$-adic $L$-function is replaced by the $p$-adic $L$-function attached to the quadratic imaginary field and the character $\psi$, that we denote by $\mathcal L(\hf/K,\psi)(k,s)$ following the notations of \cite{Se1}.

\begin{propo}[Seveso]
The first derivative of $\mathcal L(f/K,\psi)(k,s)$ in the weight direction is given by \[ 2 \frac{d}{dx} \Big( \mathcal L(\hf/K,\psi)(x,x/2) \Big) \Big|_{x=k} = \AJ_p(y_{\psi}^{(n)})(f)+(-1)^m \AJ_p(y_{\bar \psi}^{(n)})(f). \]
\end{propo}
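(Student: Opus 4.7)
The plan is to transplant the Bertolini--Darmon proof of the weight-$2$ case (part (a) of Proposition~\ref{3ven}) to higher even weights, using the framework developed by Iovita--Spiess for Chow motives on Shimura curves. The three main inputs are: the Jacquet--Langlands transfer of $\hf$ to a $\Lambda_{\hf}$-adic family of rigid-analytic modular forms on $X = X_{N^+, pN^-}$; the Cerednik--Drinfeld $p$-adic uniformization of $X$ by a quotient of the $p$-adic upper half-plane, which produces an action of an arithmetic subgroup on the Bruhat--Tits tree $\mathcal{T}$ of $\PGL_2(\Q_p)$; and the Iovita--Spiess Chow motive $\cM_{k-2}$, together with its $p$-adic \'etale Abel--Jacobi map, which houses the Heegner cycles $y_\psi^{(n)}$.

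First, realize $\mathcal{L}(\hf/K,\psi)(k,s)$ as a Mellin-type transform of a two-variable distribution. Concretely, build a $\Lambda_{\hf}$-adic harmonic cocycle $\mu_{\hf}$ on $\mathcal{T}$ with values in locally analytic distributions on $\Z_p^\times$, whose specialization at a classical weight $k$ point recovers the rigid-analytic modular symbol attached to the Jacquet--Langlands lift of $f_k$. Evaluate $\mu_{\hf}$ at the pair of CM points of $K$ determined by the optimal embedding of the relevant order, twist by the finite order character $\psi$, and Mellin-transform the resulting measure on $\Z_p^\times$; this is precisely the construction of $\mathcal{L}(\hf/K,\psi)(k,s)$ in \cite{Se1}, extending \cite{BD}.

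Second, exploit the sign of the functional equation. Because $pN^-$ is a product of an odd number of primes inert in $K$, the sign for $L(f_k/K,\psi,s)$ at the central point $s=k/2$ is $-1$, and so $\mathcal{L}(\hf/K,\psi)(x,x/2)$ vanishes identically in $x$. Differentiating once along the line $s=x/2$, the Greenberg--Stevens trick converts the $p$-adic integral into the integral of the logarithmic derivative of the universal character $\langle z\rangle^{x/2}$ against $\mu_{\hf_k}\otimes\psi$; the chain rule applied to $s=x/2$ is what produces the factor of $2$ on the left-hand side.

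Third, identify the resulting $p$-adic integral with the $\AJ_p$-image of the Heegner cycle. Iovita--Spiess express $\AJ_p:\CH^{m+1}(\cM_{k-2})\rightarrow M_k^\vee$ as a $p$-adic Coleman-type integral on the Mumford uniformization of $X$, and their formula matches (up to the symmetrization dictated by Galois) the derivative distribution produced in the previous step, evaluated at the CM divisor corresponding to $\psi$. The contribution of $\bar\psi$, together with the sign $(-1)^m$, arises from the action of the Atkin--Lehner involution $w_p$ on the weight-$k$ specialization combined with complex conjugation exchanging $\psi$ and $\bar\psi$.

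The main obstacle is Step three: extending the weight-$2$ argument, in which the formal group of $\Jac(X)$ linearizes the $p$-adic logarithm of a Heegner point, to the setting of the Chow motive $\cM_{k-2}$, where the relevant linearization is provided by the filtered $\varphi$-module attached to $\cM_{k-2}$ at $p$. Matching the measure-theoretic derivative computed from $\mu_{\hf}$ with the explicit Iovita--Spiess cocycle representing $\AJ_p(y_\psi^{(n)})$ is the technical core, and requires a careful compatibility between the $\Lambda$-adic interpolation of modular symbols on $X$ and the rigid cohomology of $\cM_{k-2}$.
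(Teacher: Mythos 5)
The paper does not prove this proposition: it is quoted directly from Seveso's article [Se1] and is introduced with the remark ``let us recall here his main result for the sake of completeness.'' There is therefore no internal argument in the paper to measure your proposal against; the reference is to Seveso's original proof.

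That said, your outline does identify the correct ingredients in Seveso's actual argument, which indeed transplants the Bertolini--Darmon weight-two strategy to the Iovita--Spiess Chow motive: Jacquet--Langlands transfer to the Shimura curve $X_{N^+,pN^-}$, Cerednik--Drinfeld uniformization, a $\Lambda$-adic harmonic cocycle interpolating rigid modular symbols, and the Greenberg--Stevens trick of exploiting the forced vanishing along $s=x/2$. But the proposal is a roadmap rather than a proof. The step you label as ``the technical core'' --- matching the derivative of the $\Lambda$-adic distribution evaluated at CM divisors against the Iovita--Spiess Coleman-integral expression for $\AJ_p(y_\psi^{(n)})$ --- is precisely the substantive content of Seveso's theorem, and deferring it means nothing has actually been proved. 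In addition, your explanation of the factor of $2$ on the left-hand side is not right: applying the chain rule to $s=x/2$ introduces a factor of $\tfrac12$ in the $s$-direction, not a $2$; the overall $2$ in Seveso's formula is a normalization arising from the symmetrization over $\psi$ and $\bar\psi$ (equivalently from the functional equation pairing the two Heegner cycles), not from the chain rule. Finally, the sentence claiming $\mathcal L(\hf/K,\psi)(x,x/2)$ ``vanishes identically'' conflates the generic vanishing of the central $L$-value forced by the sign $-1$ with vanishing of the $p$-adic $L$-function along the central line; the latter is what one needs, and it follows from the interpolation property at classical weights together with density, but that inference should be made explicit. As written, the argument would not compile into a proof without carrying out the hard identification you postpone.
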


This suggests that some of our results can be transposed to a higher weight situation, replacing the points over the elliptic curve by the corresponding Heegner cycles. More precisely, the results relying on the work of Darmon, Lauder and Rotger on the elliptic Stark conjecture \cite{DLR1} can be adapted following the generalizations of Gatti and Guitart to higher weights \cite{GG}. Similarly, the construction of {\it derived} cohomology classes, anticipated in the introduction and developed in Section \ref{cohom-diag}, can be also carried out for general weights $(k,1,1)$.

\section{Derived diagonal cycles and an explicit reciprocity law}\label{cohom-diag}

\subsection{Diagonal cycles and an explicit reciprocity law}

Darmon and Rotger constructed in \cite{DR3} an element \[ \kappa(\hf,\hg,\hh) \in H^1(\mathbb Q, \mathbb V_{{\bf fgh}}^{\dag}) \] arising from the interpolation of diagonal cycles along the balanced region. An alternative construction has been given by Bertolini, Seveso and Venerucci \cite[Section 3]{BSV}. This class is symmetric in all three variables. Let \[ \res_p: H^1(\mathbb Q, \mathbb V_{{\bf fgh}}^{\dag}) \rightarrow H^1(\mathbb Q_p, \mathbb V_{{\bf fgh}}^{\dag}) \] denote the restriction map to the local cohomology at $p$, and set \[ \kappa_p(\hf,\hg,\hh) := \res_p(\kappa(\hf,\hg,\hh)) \in H^1(\mathbb Q_p, \mathbb V_{{\bf fgh}}^{\dag}). \]

One of the main results of both \cite{BSV} and \cite{DR3} is the proof of an explicit reciprocity law. As showed in loc.\,cit., the Galois representation $\mathbb V_{{\bf fgh}}^{\dag}$ is endowed with a four-step filtration
\begin{equation}\label{four-step}
0 \subset \mathbb V_{{\bf fgh}}^{++} \subset \mathbb V_{{\bf fgh}}^+ \subset \mathbb V_{{\bf fgh}}^- \subset \mathbb V_{{\bf fgh}}^{\dag}
\end{equation}
by $G_{\mathbb Q_p}$-stable $\Lambda_{{\bf fgh}}$-submodules of ranks 0, 1, 4, 7 and 8, respectively. Moreover, \[ \mathbb V_{{\bf fgh}}^+/\mathbb V_{{\bf fgh}}^{++} = \mathbb V_{\hf}^{\hg \hh} \oplus \mathbb V_{\hg}^{\hh \hf} \oplus \mathbb V_{\hh}^{\hf \hg}. \]

We discuss now the definition of $\mathbb V_{\hf}^{\hg \hh}$. Let $\Theta_{\hf}^{\hg \hh}$ be the $\Lambda_{{\bf fgh}}$-adic cyclotomic character whose specialization at a point of weight $(k,\ell,m)$ is $\varepsilon_{\cyc}^t$, with $t:=(-k+\ell+m)/2$, and let $\psi_{\hf}^{\hg \hh}$ be the unramified character of $G_{\mathbb Q_p}$ sending $\Fr_p$ to $\chi_f^{-1}(p) {\bf a}_p(\hf) {\bf a}_p(\hg)^{-1} {\bf a}_p(\hh)^{-1}$. Define $\mathbb U$ as the unramified $\Lambda_{{\bf fgh}}$-adic representation of $G_{\mathbb Q_p}$ given by several copies of the character $\psi_{\hf}^{\hg \hh}$, and let \[ \mathbb V_{\hf}^{\hg \hh}=\mathbb U(\Theta_{\hf}^{\hg \hh}). \] We finally introduce the $\Lambda$-adic Dieudonn\'e module \[ \mathbb D(\mathbb U) := (\mathbb U \hat \otimes \mathbb Z_p^{\nr})^{G_{\mathbb Q_p}}. \]

Then, one may construct a Perrin-Riou regulator map whose source is $H^1(\mathbb Q_p, \mathbb V_{\hf}^{\hg \hh}) \rightarrow \Lambda_{{\bf fgh}}$ and which interpolates either the Bloch--Kato logarithm or the dual exponential map, according to the value of a certain Hodge--Tate weight. In order to state their main properties, we need to introduce more terminology. Let $c=\frac{k+\ell+m-2}{2}$, and with the previous notations, define \[ \mathcal E^{\PR}(x,y,z) = \frac{1-p^{-c} \beta_{\hf_x} \alpha_{\hg_y} \alpha_{\hh_z}}{1-p^{-c} \alpha_{\hf_x} \beta_{\hg_y}\beta_{\hh_z}}. \]

The following result is discussed e.g. in \cite[Proposition 5.6]{DR3} and follows from the general theory of Perrin-Riou maps. For this statement we implicitly assume that neither $\beta_{\hf_x} \alpha_{\hg_y} \alpha_{\hh_z}$ nor $\alpha_{\hf_x} \beta_{\hg_y} \beta_{\hh_z}$ vanish. If this were the case, and as we will later see, we need to work with the expression \[ \mathcal E^{\PR}(x,y,z) = \frac{1-\chi_f(p) p^{\frac{k-\ell-m}{2}} \alpha_{\hf_x}^{-1} \alpha_{\hg_y} \alpha_{\hh_z}}{1-\bar \chi_f(p)p^{\frac{\ell+m-k-2}{2}} \alpha_{\hf_x} \alpha_{\hg_y}^{-1} \alpha_{\hh_z}^{-1}}, \] which agrees with the former in the non-exceptional case.

\begin{propo}\label{perrin}
There is a homomorphism (usually named {\it Perrin-Riou} regulator) \[ \mathcal L_{\hf,\hg \hh}: H^1(\mathbb Q_p, \mathbb V_{\hf}^{\hg \hh}) \rightarrow \mathbb D(\mathbb U) \] such that for all $\kappa_p \in H^1(\mathbb Q_p, \mathbb V_{\hf}^{\hg \hh})$ the image $\mathcal L_{\hf, \hg \hh}(\kappa_p)$ satisfies the following interpolation properties:
\begin{enumerate}
\item For all points $(x,y,z) \notin \mathcal W_{{\bf fgh}}^f$, \[ \nu_{x,y,z}(\mathcal L_{\hf,\hg \hh}(\kappa_p)) = \frac{(-1)^t}{t!} \mathcal E^{\PR}(x,y,z) \cdot \log_{\BK}(\nu_{x,y,z}(\kappa_p)), \]
\item For all points $(x,y,z) \in \mathcal W_{{\bf fgh}}^f$, \[ \nu_{x,y,z}(\mathcal L_{\hf,\hg \hh}(\kappa_p)) = (-1)^t \cdot (1-t)! \cdot \mathcal E^{\PR}(x,y,z) \cdot \exp_{\BK}^*(\nu_{x,y,z}(\kappa_p)). \]
\end{enumerate}
\end{propo}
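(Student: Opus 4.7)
The plan is to apply Perrin-Riou's big logarithm formalism to the representation $\mathbb V_{\hf}^{\hg\hh}$, in the multivariable Hida-family extension of her original construction (as developed, for instance, by Loeffler--Zerbes). The crucial structural input is that $\mathbb V_{\hf}^{\hg\hh} = \mathbb U(\Theta_{\hf}^{\hg\hh})$ is a $\Lambda$-adic cyclotomic twist of an \emph{unramified} $\Lambda_{{\bf fgh}}$-adic representation: the Galois action separates cleanly into an unramified part (governed by the Frobenius eigenvalues encoded in $\mathbb U$) and a wildly ramified part (concentrated in $\Theta_{\hf}^{\hg\hh}$), which is precisely the setup to which Perrin-Riou's machinery applies.

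The first step is to absorb $\Theta_{\hf}^{\hg\hh}$ into the coefficients in order to reinterpret $H^1(\mathbb Q_p, \mathbb V_{\hf}^{\hg\hh})$ as a cyclotomic Iwasawa cohomology group with coefficients in $\mathbb U$ over $\Lambda_{{\bf fgh}}$. Perrin-Riou's big logarithm for the unramified representation $\mathbb U$ then provides a $\Lambda_{{\bf fgh}}$-linear map into $\mathbb D(\mathbb U)$, and this produces the sought $\mathcal L_{\hf,\hg\hh}$. Formulas (1) and (2) reduce to the standard interpolation property of the big logarithm at classical points: for Hodge--Tate weight $t=(-k+\ell+m)/2 \geq 1$ (the complement of $\mathcal W_{{\bf fgh}}^f$) one recovers $\frac{(-1)^t}{t!}\log_{\BK}$, while for $t \leq 0$ (the region $\mathcal W_{{\bf fgh}}^f$) one recovers $(-1)^t(1-t)!\exp_{\BK}^*$. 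The Euler factor $\mathcal E^{\PR}(x,y,z)$ emerges from the comparison with the crystalline structure on $\mathbb D(\mathbb U)$: the numerator $1-p^{-c}\beta_{\hf_x}\alpha_{\hg_y}\alpha_{\hh_z}$ is the $(1-p^{-1}\varphi)$-factor evaluated on the $\varphi$-eigenvalue of the relevant specialization of $\mathbb D(\mathbb U)$, while the denominator $1-p^{-c}\alpha_{\hf_x}\beta_{\hg_y}\beta_{\hh_z}$ is the $(1-\varphi)$-factor on the crystalline dual.

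The delicate point will be the exceptional locus on which $\beta_{\hf_x}\alpha_{\hg_y}\alpha_{\hh_z}=p^c$ or $\alpha_{\hf_x}\beta_{\hg_y}\beta_{\hh_z}=p^c$, where the naive expression for $\mathcal E^{\PR}$ becomes indeterminate of the form $0/0$. Here one must use the alternative expression for $\mathcal E^{\PR}$ given in the statement, obtained via the Hecke relations $\alpha_{\hf_x}\beta_{\hf_x}=\chi_f(p)p^{k-1}$ and their analogues for $\hg_y$, $\hh_z$, and verify that the Perrin-Riou map continues to interpolate correctly in this form. Since $\mathcal L_{\hf,\hg\hh}$ is constructed intrinsically on the Iwasawa-cohomology side, it is insensitive to the vanishing of these individual Euler factors; the interpolation formula can then be extended from the dense set of non-exceptional classical points by continuity, reconciling the two expressions for $\mathcal E^{\PR}$ wherever both make sense and showing that the rewritten form is the correct object along the exceptional locus.
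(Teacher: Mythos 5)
Your proposal follows the same route the paper takes: the paper does not give an independent argument but defers to the general theory of Perrin--Riou big logarithm maps (citing the analogous statement in the Darmon--Rotger preprint), and your sketch is precisely the skeleton of that theory. The key structural observation --- that $\mathbb V_{\hf}^{\hg\hh}=\mathbb U(\Theta_{\hf}^{\hg\hh})$ is a $\Lambda$-adic cyclotomic twist of an unramified module, so that $H^1(\mathbb Q_p,\mathbb V_{\hf}^{\hg\hh})$ is Iwasawa cohomology of $\mathbb U$ and the big logarithm lands in $\mathbb D(\mathbb U)$ --- is exactly what makes the construction work, and your matching of $t\geq 1$ with $\log_{\BK}$ (the complement of $\mathcal W_{\bf fgh}^f$) and $t\leq 0$ with $\exp_{\BK}^*$ (the region $\mathcal W_{\bf fgh}^f$) is correct.

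One small imprecision: the delicate point along the exceptional locus is not a $0/0$ indeterminacy of $\mathcal E^{\PR}$, since the numerator $1-p^{-c}\beta_{\hf_x}\alpha_{\hg_y}\alpha_{\hh_z}$ and the denominator $1-p^{-c}\alpha_{\hf_x}\beta_{\hg_y}\beta_{\hh_z}$ generically do not vanish simultaneously. The real issue is twofold: the expression involving $\beta$'s does not patch into an Iwasawa function along the family (unlike the rewritten form in terms of $\alpha$'s and $\chi$'s via the Hecke relations $\alpha\beta=\chi(p)p^{k-1}$), and at an exceptional point one of the two Euler factors may vanish individually, which either forces the value of $\mathcal L_{\hf,\hg\hh}$ to vanish or renders the interpolation formula an identity $0=0$ rather than giving information about $\log_{\BK}$. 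Your remark that $\mathcal L_{\hf,\hg\hh}$ itself is constructed intrinsically and hence unaffected, with the interpolation formula extended from the dense non-exceptional locus, is nonetheless the right way to view the situation and matches the paper's handling of the alternative form of $\mathcal E^{\PR}$.
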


Following \cite{DR3}, one can define
\begin{equation}\label{proj-quo}
\kappa_p(\hf,\hg,\hh)^f \in H^1(\mathbb Q_p, \mathbb V_{\hf}^{\hg \hh})
\end{equation}
as the projection of the local class $\kappa_p(\hf,\hg,\hh)$ to $\mathbb V_{\hf}^{\hg \hh}$. Let $\eta_{\hf^*}$, $\omega_{\hg^*}$ and $\omega_{\hh^*}$ be the canonical differentials attached to Hida families as introduced for instance in \cite[Section 10]{KLZ}. The following result has been independently established in \cite[Thoerem A]{BSV} and \cite[Theorem 10]{DR3}.

\begin{propo}\label{rec-law}
For any triplet of $\Lambda$-adic test vectors $(\tilde{\hf},\tilde{\hg},\tilde{\hh})$, the following equality holds in the ring of fractions of $\Lambda_{{\bf fgh}}$: \[ \langle \mathcal L_{\hf,\hg \hh}(\kappa_p(\hf,\hg,\hh)^f), \eta_{\tilde \hf^*} \otimes \omega_{\tilde \hg^*} \otimes \omega_{\tilde \hh^*} \rangle = \mathcal L_p^f(\tilde \hf, \tilde \hg, \tilde \hh). \]
\end{propo}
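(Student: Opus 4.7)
The plan is to view both sides of the claimed equality as elements in the fraction field of $\Lambda_{\bf fgh}$ and to verify the identity on a Zariski dense set of classical points. The natural such set is the balanced region $\mathcal W_{\bf fgh}^{\bal}$, where the $\Lambda$-adic class $\kappa(\hf,\hg,\hh)$ specializes, by its very construction, to (a canonical normalization of) the image under the étale $p$-adic Abel--Jacobi map of a generalized Gross--Kudla--Schoen diagonal cycle on the appropriate triple product of Kuga--Sato varieties. I would therefore fix such a triple $(x,y,z)$ of weights $(k,\ell,m)$ with $t = (-k+\ell+m)/2 > 0$ and analyze each side.

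First, since $(x,y,z)\notin\mathcal W_{\bf fgh}^f$, Proposition \ref{perrin} applies in its first form and the specialization of the left-hand side equals
$$\frac{(-1)^t}{t!}\,\mathcal E^{\PR}(x,y,z)\cdot \bigl\langle \log_{\BK}\bigl(\kappa_p(f_x,g_y,h_z)^f\bigr),\,\eta_{\tilde f_x^*}\otimes\omega_{\tilde g_y^*}\otimes\omega_{\tilde h_z^*}\bigr\rangle.$$
The next step is to invoke the $p$-adic Gross--Zagier formula for diagonal cycles of Darmon--Rotger and Bertolini--Seveso--Venerucci: the pairing of $\log_{\BK}$ applied to the Abel--Jacobi image of the balanced diagonal cycle with the canonical differentials expresses explicitly as an Euler factor times the square root of the central critical value $L(f_x^{\circ},g_y^{\circ},h_z^{\circ},c)$, divided by the Petersson norm $\langle f_x^{\circ},f_x^{\circ}\rangle$ and multiplied by the combinatorial constant $\mathfrak a(k,\ell,m)^{1/2}$ of Proposition \ref{3l}. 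Matching this output against the interpolation formula for $\Lp^f$ recalled in Proposition \ref{3l}, I would check that the factor $\mathcal E^{\PR}(x,y,z)$ combines with the Euler factor produced by the $p$-adic Gross--Zagier formula to reproduce precisely the ratio $\mathcal E(x,y,z)/\mathcal E_0(x)\mathcal E_1(x)$ appearing there; the constant $(-1)^t/t!$ absorbs the remaining combinatorics.

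The hardest part will be this bookkeeping of periods, test vectors and Euler factors. In particular, one must verify that the canonical differentials $\eta_{\tilde\hf^*},\omega_{\tilde\hg^*},\omega_{\tilde\hh^*}$, whose definition rests on the choice of Hsieh-type test vectors fixed above Proposition \ref{3l}, are compatible with the normalizations implicit in Besser's Coleman-integration formula for the étale Abel--Jacobi map on Kuga--Sato varieties. This is where the parallel derivations of this reciprocity law in \cite{BSV} and \cite{DR3} diverge most in their technical apparatus, but both ultimately reduce the problem to a local computation at $p$ in which the relevant branches of the nearly-overconvergent $p$-adic modular forms match the $p$-depleted $\Lambda$-adic sheaves underlying the diagonal cycle construction. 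Once this compatibility is in place at balanced classical points in the open dense locus where $\beta_{\hf_x}\alpha_{\hg_y}\alpha_{\hh_z}$ and $\alpha_{\hf_x}\beta_{\hg_y}\beta_{\hh_z}$ are both nonzero (so the simpler form of $\mathcal E^{\PR}$ applies), the equality of meromorphic $\Lambda_{\bf fgh}$-adic functions follows by Zariski density, and the alternative expression for $\mathcal E^{\PR}$ used in the exceptional locus is then forced by uniqueness.
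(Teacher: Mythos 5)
Your broad strategy---verify the identity on a Zariski-dense set of classical points and conclude by rigid-analytic continuation---is the standard one, and the paper itself offers no proof, simply attributing the statement to \cite[Theorem A]{BSV} and \cite[Theorem 10]{DR3}. However, your sketch has a genuine gap at its heart. You work at classical points with $t = (-k+\ell+m)/2 > 0$, which by definition lie outside $\mathcal W_{{\bf fgh}}^f$, and yet propose to ``match this output against the interpolation formula for $\Lp^f$ recalled in Proposition \ref{3l}''. That interpolation property is stated, and only valid, at points of $\mathcal W_{{\bf fgh}}^f$; it has nothing to say at the points you have chosen. Worse, when $(k,\ell,m)$ is balanced the global sign of the functional equation of $L(\hf_x^{\circ}\otimes\hg_y^{\circ}\otimes\hh_z^{\circ},s)$ at $s=c$ is $-1$ (the archimedean local sign flips to $-1$ in the balanced regime, while the running coprimality hypothesis forces the finite local signs to be $+1$), so $L(\hf_x^{\circ},\hg_y^{\circ},\hh_z^{\circ},c)=0$. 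The expression you assert the $p$-adic Gross--Zagier formula produces---an Euler factor times $\mathfrak a(k,\ell,m)^{1/2}$ times the square root of this central value divided by a Petersson norm---is therefore identically zero, whereas the Bloch--Kato logarithm of $\kappa_p^f$ is generically nontrivial at such points.

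What the $p$-adic Gross--Zagier formula of \cite{DR1} and its $\Lambda$-adic extension in \cite{BSV} and \cite{DR3} actually establish at balanced points is a direct relation between $\langle\log_{\BK}(\kappa_p^f),\eta\otimes\omega\otimes\omega\rangle$ and the value of $\Lp^f$ at $(x,y,z)$---a value lying \emph{outside} the region of classical interpolation, hence not a classical $L$-value. That relation is obtained by a purely local computation at $p$, comparing the de Rham realization of the cycle class on the Kuga--Sato triple product with the nearly-overconvergent $p$-adic modular forms entering Hsieh's construction of $\Lp^f$, not by routing through a (vanishing) complex special value. The alternative would be to verify at $f$-dominant points, where Proposition \ref{3l} does apply; but there the Perrin--Riou map interpolates the dual exponential rather than the logarithm, and the class admits no geometric Abel--Jacobi description, so computing $\exp^*_{\BK}(\kappa_p^f)$ is not accomplished by the Gross--Zagier step you invoke.
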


\begin{remark}
There exist analogue reciprocity laws for $\Lp^g$ and $\Lp^h$.
\end{remark}

We can also formulate an explicit reciprocity law for the improved $p$-adic $L$-function. Since along the region $k=\ell+m$ the Perrin-Riou map interpolates the dual exponential, we have that
\begin{equation}
\frac{1}{1-p^{-k+1} \alpha_{\hf_x} \beta_{\hg_y} \beta_{\hh_z}} \cdot \langle \exp_{\BK}^* (\kappa_p(\hf,\hg,\hh)^f(x,y,z)), \eta_{\tilde \hf_x^*} \otimes \omega_{\tilde \hg_y^*} \otimes \omega_{\tilde \hh_z^*} \rangle = \mathcal L_p^f(\tilde \hf_x, \tilde \hg_y, \tilde \hh_z)^*,
\end{equation}
and in particular the dual exponential map vanishes at $(x_0,y_0,y_0)$ (i.e. the class is crystalline) if and only if the improved $p$-adic $L$-function is zero at that point.

\begin{remark}
In \cite{GGMR}, the authors study the cohomology classes in a generic rank zero situation, where they are non-crystalline. This yields a formula for the special value $\Lp^g$ in terms of $\Lp^f$ in absence of exceptional zeros. Again, the key point is that each component of the cohomology class encodes information about a different $p$-adic $L$-function.
\end{remark}

\subsection{Vanishing of cohomology classes}

In \cite[Section 8.2]{BSV}, the authors deal with a situation where the numerator of the Perrin-Riou map $\mathcal L_{\hf,\hg \hh}$ vanishes, defining an improved map whose derivatives may be explicitly computed. We come back to this question later on. Let us analyze, firstly, the vanishing of the denominator of the Perrin-Riou map, but in the case of the Perrin-Riou map $\mathcal L_{\hg,\hh \hf}$, that is:
\begin{equation}
1-\bar \chi_g(p) p^{\frac{k-\ell+m-2}{2}}\alpha_{\hf_x}^{-1} \alpha_{\hg_y} \alpha_{\hh_z}^{-1} = 0.
\end{equation}
Since we have placed ourselves in the ordinary setting, a necessary condition for this to happen is $k+m=\ell+2$, which moreover suffices to guarantee the analyticity of the Euler factor in the denominator.

Hence, when $f$ is of weight 2 with split multiplicative reduction at $p$, and $g$ and $h$ are self-dual of the same weight ($h = g \otimes \chi_g^{-1}$), the denominator of the Perrin-Riou map vanishes. This means that we expect \[ \log_{\BK}(\kappa_p(\hf,\hg,\hh)^g(x,y,z))=0. \] However, the self-duality condition is not necessary for this vanishing, and following the conventions of the introduction in the case of weights $(2,1,1)$ (again, with $E$ of split multiplicative reduction), it suffices to impose that $\alpha_g \beta_h = 1$. This encompasses for example the case of theta series of quadratic fields where the prime $p$ is inert. Nevertheless, there are certain phenomena which are exclusive from the self-dual case: indeed, the fact that the Hida families interpolating both $g$ and $h$ keep the self-duality condition gives us a vanishing along the whole line $(2,\ell,\ell)$. We treat both the self-dual and the non self-dual case, emphasizing the main differences between them.

We begin by showing that when $\alpha_g \beta_h = 1$ and $g$ and $h$ are self-dual, the local class $\kappa_p(f,g_{\alpha},h_{\alpha})$ vanishes, using the techniques of our prior work \cite{RiRo1}. Although this is not strictly necessary since we will later see that the {\it whole} global class is zero, we believe that it may be instructive for the reader to compare the formalism of \cite{RiRo1}, which relies on the basic properties of the Perrin-Riou maps, with the more conceptual proof of \cite[Section 8]{BSV}, based on the geometric construction of an {\it improved} cohomology class.

\begin{propo}\label{longer}
With the running assumptions, the specialization of the $\Lambda$-adic cohomology class $\kappa_p(\hf,\hg,\hg^*)$ at $(x_0,y_0,y_0)$ vanishes, that is, $\kappa_p(f,g_{\alpha},g_{1/\beta}^*)=0$.
\end{propo}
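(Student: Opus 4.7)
The plan is to follow \cite[Prop.~4.7]{RiRo1}: using the filtration (\ref{four-step}), it suffices to show that the projection of $\kappa_p(f,g_\alpha,g^*_{1/\beta})$ to each graded piece vanishes at $(x_0,y_0,y_0)$. The main work concerns the three rank-one summands of $\mathbb{V}^+/\mathbb{V}^{++}$; the extremal pieces $\mathbb{V}^{++}$ and $\mathbb{V}^\dag/\mathbb{V}^-$ are handled by the balanced-diagonal-cycle construction of $\kappa$, which by symmetry and rank considerations places the local class in the middle filtration step.

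For the $\mathbb{V}_\hg^{\hh\hf}$-projection, I will apply the Perrin--Riou regulator $\mathcal{L}_{\hg,\hh\hf}$ and the analog of Proposition~\ref{rec-law} for $\hg$-dominant weight, obtaining
\[ \langle \mathcal{L}_{\hg,\hh\hf}(\kappa_p^g), \eta_{\hg^*}\otimes\omega_{\hh^*}\otimes\omega_{\hf^*}\rangle = \mathcal{L}_p^g(\hf,\hg,\hg^*). \]
Under $\alpha_f=1$ (split multiplicative reduction) and $\alpha_h=1/\beta_g$ (self-duality), the denominator $1-\bar\chi_g(p)\alpha_{\hf_x}^{-1}\alpha_{\hg_y}\alpha_{\hh_z}^{-1}p^{(k-\ell+m-2)/2}$ of $\mathcal{E}^{\PR}_g$ evaluates at $(x_0,y_0,y_0)$ to $1-\bar\chi_g(p)\alpha_g\beta_g=0$, and in fact vanishes identically along the whole line $(2,\ell,\ell)$. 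By the analyticity of both sides and the interpolation formula of Proposition~\ref{perrin}, this forces $\log_{\BK}(\kappa_p^g(x_0,y_0,y_0))=0$. Since Frobenius on $V_g^{hf}$ here has eigenvalue $\chi_g^{-1}(p)\alpha_g\beta_g=1$ and the cyclotomic twist $t=0$, the target of $\log_{\BK}$ collapses and this vanishing is, on its own, uninformative. The self-duality hypothesis overcomes this: along the whole line $(2,\ell,\ell)$ the Euler factor vanishes and $\mathcal{L}_p^g$ is identically zero, and the rigidity of $p$-adic analytic families then forces $\kappa_p^g$ itself to vanish on this line, whence at $(x_0,y_0,y_0)$. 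Symmetric reasoning using $\beta_g\alpha_h=1$ handles $\kappa_p^h$. For $\kappa_p^f$, the Frobenius eigenvalue on $V_f^{gh}$ is $\beta_g/\alpha_g\not\equiv 1\pmod p$ by hypothesis (H2), so $\log_{\BK}$ is injective on the corresponding $H^1_f$; combining Proposition~\ref{rec-law} with the vanishing of $\mathcal{L}_p^f(x_0,y_0,y_0)$ (a consequence of the exceptional factor $\mathcal{E}_1(x_0)=0$ in Proposition~\ref{3l}) yields $\kappa_p^f=0$.

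The principal obstacle is the passage from vanishing of the Bloch--Kato logarithm to vanishing of the class itself on the $\hg$- and $\hh$-summands, where Frobenius acts trivially and the Bloch--Kato logarithm has a nontrivial kernel (equivalently, $D_{dR}/\Fil^0$ vanishes). This is exactly where the self-duality hypothesis $\hh=\hg^*$ plays its essential role, converting a pointwise exceptional zero into an identical vanishing along the whole line $(2,\ell,\ell)$ and thereby enabling a rigidity argument. Without self-duality one would have to fall back on the refined machinery of improved cohomology classes developed in \cite[Section~8]{BSV} and mentioned at the start of this subsection as the alternative, more conceptual route.
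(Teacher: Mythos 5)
Your overall strategy (decompose componentwise via the four-step filtration, then argue for each graded piece) matches the paper's second step, but you are missing the paper's crucial first step, and the reasoning on the $g$- and $h$-components has a gap at exactly the place you flag as subtle.

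The paper's proof runs in two stages. \emph{Stage 1:} it establishes that $\kappa(\hf,\hg,\hg^*)$ vanishes at every classical point of weights $(2,\ell,\ell)$ with $\ell \geq 2$. This is \emph{not} done via vanishing of a $p$-adic $L$-function; it is an algebraic comparison with the Atkin--Lehner-twisted (``improved'') class $\kappa^\dagger$ of \cite[Theorem 7.1]{BSV}, which satisfies $\kappa = \bigl(1-\bar\chi(p)\alpha_{\hg_y}\alpha_{\hf_x}^{-1}\alpha_{\hh_z}^{-1}p^{(k-\ell+m-2)/2}\bigr)\kappa^\dagger$, and the Euler factor vanishes identically on $(2,\ell,\ell)$ once $\hh = \hg^*$. \emph{Stage 2:} only \emph{after} this dense vanishing is in hand does the paper run the Perrin--Riou/rigidity argument component by component (for $V_f^{gg^*}$, the Perrin--Riou map is an isomorphism onto a rank-one free $\Lambda_\hg$-module because $\psi_\hg^{-2}\neq 1$; for $V_g^{g^*f}$ and $V_{g^*}^{fg}$, it uses that $H^1(\mathbb Q_p,\Lambda_\hg(1))$ is free of rank two), and in each case the vanishing at infinitely many specializations supplied by Stage 1 forces the $\Lambda$-adic projection to be zero. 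Your proposal skips Stage 1 entirely: you appeal to ``rigidity of $p$-adic analytic families'' but never produce the dense set of vanishing specializations that rigidity needs as input. Asserting that $\mathcal L_p^g$ ``is identically zero'' along $(2,\ell,\ell)$ is precisely equivalent to the assertion one is trying to prove (via the reciprocity law) and is not justified independently.

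There are also concrete errors in the local analysis. For the $\mathbb V_\hg^{\hh\hf}$-projection at $(2,1,1)$, the cyclotomic twist parameter is $t_g = (k-\ell+m)/2 = 1$, not $0$; the specialization is an unramified twist of $\mathbb Q_p(1)$, for which $D_\dR/\Fil^0$ is one-dimensional and $\log_{\BK}$ does \emph{not} have zero target — so the phrase ``the target of $\log_{\BK}$ collapses'' is incorrect. The genuine subtlety (which you correctly sense) is that the Perrin--Riou interpolation acquires a pole because the \emph{denominator} of $\mathcal E^{\PR}$ vanishes, so the naive reciprocity law does not immediately control the whole of $H^1$; one must either work $\Lambda$-adically with the module structure (as the paper does, given Stage 1) or pass to improved/derived classes. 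Finally, for $\kappa_p^f$, your appeal to ``$\mathcal E_1(x_0)=0$'' as the source of $\mathcal L_p^f(x_0,y_0,z_0)=0$ is misdirected: $\mathcal E_1$ sits in the \emph{denominator} of the interpolation factor $\mathfrak e$, so its vanishing is not what forces the $p$-adic $L$-value to be zero (that comes from a vanishing factor of $\mathcal E(x,y,z)$); and at $(2,1,1)\in\mathcal W^f_{\bf fgh}$ the Perrin--Riou map interpolates the dual exponential rather than the logarithm, so the injectivity argument needs to be rephrased accordingly. In short: the decomposition idea is right, the self-duality role is correctly identified, but without Stage 1 the rigidity argument has no purchase, and several of the local computations need correction.
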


\begin{proof}

We will follow the same strategy used in \cite[Theorem 3.5]{RiRo1}. First of all we show, invoking \cite[Theorem 7.1]{BSV}, that any specialization of the three-variable $\Lambda$-adic class at a point of weights $(2,\ell,\ell)$, with $\ell \geq 2$, is zero. In order to achieve this, we just use the comparison provided by the aforementioned result with the twisted class $\kappa^{\dag}$, twisting now in the $g$-variable, that is, applying the operator $\id \otimes w_p' \otimes \id$ according to the definitions given at the beginning of Section 7.2 of loc.\,cit., where $w_p$ stands for the Atkin--Lehner involution. As we later discuss, this class may be understood as an {\it improved} cohomology class, since it agrees with the former up to multiplication by the Euler factor \[ 1-\frac{\bar \chi(p) \alpha_{\hg_y}}{\alpha_{\hf_x} \alpha_{\hh_z}}p^{\frac{k-\ell+m-2}{2}}. \] This factor is zero over the line $(2,\ell,\ell)$ when we take Hida families such that $\hh=\hg^*$, since $\bar \chi(p) \alpha_{\hg_y} = \alpha_{\hh_y}$. Observe that we are implicitly using Lemma 8.4 of loc.\,cit., which assert that the class $\kappa(\hf,\hg,\hh)$ is symmetric in all three variables.

The second part of the proof consists on applying a limit argument componentwise, via the corresponding Perrin-Riou maps, to conclude that the limit when $\ell$ goes to one is also zero. For this last step, we look at the four different components of the local class $\kappa_p(f,g_{\alpha},g_{1/\beta}^*)$ corresponding to the {\it balanced} subspace $\mathbb V_{fgg^*}^+$. This suffices according to the results established in \cite[Corollary 7.2]{BSV} and following the notations of Section 6.2 in loc.\,cit., which asserts that the three-variable cohomology class lies in the balanced subspace. The components of the balanced subspace are denoted by $V_f^{gg^*}$, $V_g^{g^*f}$, $V_{g^*}^{fg}$ and $V_{fgg^*}^{++}$, where $V_f^{gg^*}$ stands for the specialization of $\mathbb V_{\hf}^{\hg \hg^*}$ and similarly for the other factors (recall the filtration of \eqref{four-step}).
\begin{itemize}
\item We first prove that the component associated to the rank one subspace $V_f^{gg^*}$ is zero. Observe that along the line $(2,\ell,\ell)$, the specialization of the module $H^1(\mathbb Q_p, \mathbb V_{\hf}^{\hg \hg^*})$ agrees with $H^1(\mathbb Q_p, \mathbb Z_p(\psi_{g_y}^{-2})(\ell-1))$, where $y$ is a point of weight $\ell$.
    Then, the Perrin-Riou map is an application
    \begin{equation}\label{liso}
    H^1(\mathbb Q_p, \Lambda_{\hg}(\psi_{\hg}^{-2}) \hat \otimes \Lambda(\underline{\varepsilon}_{\cyc})) \rightarrow \mathbb D(\Lambda_{\hg}(\psi_{\hg}^{-2})) \hat \otimes \Lambda.
     \end{equation}
     Since $\psi_{\hg}^{-2} \neq 1$, we have $H^0(\mathbb Q_p,\Lambda_{\hg}(\psi_{\hg}^{-2}))=0$ and it follows from \cite[Theorem 8.2.3]{KLZ} that the above map is an isomorphism. Moreover, using the same argument of the proof of the last step of \cite[Theorem 3.5]{RiRo1}, we conclude that the $\Lambda$-module of \eqref{liso} is non-canonically isomorphic to $\Lambda_{\hg}$. Therefore, and since infinitely many specializations vanish according to the previously quoted result of \cite{BSV}, the corresponding $H^1$ is zero.
\item The components associated to $V_{g}^{g^*f}$ and $V_{g^*}^{fg}$ are zero; this is because \[ H^1(\mathbb Q_p, \mathbb V_{\hg}^{\hg^* \hf}|_{(2,\ell,\ell)}) \simeq H^1(\mathbb Q_p, \Lambda_{\hg}(1)) \simeq \Lambda_{\hg} \oplus \Lambda_{\hg}, \] and although the Perrin-Riou map only kills one of the above two components, the restriction of the class is zero since again infinitely many specializations are zero.
\item For the remaining component, the one corresponding to $\mathbb V_{fgg^*}^{++}$, the same argument used in the first step works once we have established that the remaining projections vanish.
\end{itemize}
\end{proof}

Consider now the surface \[ \mathcal S = \mathcal S_{k,k+m-2,m} := \{(x,y,z) \in \mathcal W_{\hf} \times \mathcal W_{\hg} \times \mathcal W_{\hh} : \wt(x)+\wt(z)=\wt(y)+2 \}, \] and also the line \[ \mathcal C := \{(x,y,z) \in \mathcal W_{\hf} \times \mathcal W_{\hg} \times \mathcal W_{\hh} : \wt(x)=2, \quad \wt(y)=\wt(z) \}. \] Observe that the surface $\mathcal S$ is just a finite cover of the plane in $\mathcal W^3$ arising as the Zariski closure of weights $(k,k+m-2,m)$.

Using the results of \cite[Section 8.2]{BSV}, we may upgrade Proposition \ref{longer} to the vanishing of the global class $\kappa(f,g_{\alpha},h_{\alpha})$ when $\alpha_g \beta_h = 1$ (and hence we can work beyond the setting of the adjoint, covering for example the case of theta series of quadratic fields where the prime $p$ remains inert).

In particular, we have the following result.
\begin{propo}\label{tot-zero}
The global class $\kappa(\hf,\hg,\hh)$ vanishes along the line $\mathcal C$ in the self dual case. Moreover, the class $\kappa(f,g_{\alpha},h_{1/\beta})$ is zero when $\alpha_g \beta_h = 1$.
\end{propo}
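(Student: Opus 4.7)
The plan is to promote the local vanishing of Proposition \ref{longer} to the global level, by combining the three-variable symmetry of $\kappa(\hf,\hg,\hh)$ with the construction of \emph{improved} global cohomology classes carried out in \cite[Section 8.2]{BSV}. That construction yields, over the surface $\mathcal S_{k=\ell+m}$, an analytic improved global class $\kappa_f^*$ together with a factorization $\kappa|_{\mathcal S_{k=\ell+m}} = (1-\alpha_{\hf}^{-1}\alpha_{\hg}\alpha_{\hh})\cdot \kappa_f^*$, which is the cohomological analog of Proposition \ref{propo-imp}. Transferring this to the $\hg$-dominant role via the symmetry of $\kappa$ (\cite[Lemma 8.4]{BSV}), I would produce an analytic improved global class $\kappa_g^*(\hf,\hg,\hh)\in H^1(\mathbb{Q},\mathbb{V}_{\hf\hg\hh}^{\dag}|_{\mathcal S})$ over the surface $\mathcal S=\{k+m=\ell+2\}$ together with a factorization
\[
\kappa(\hf,\hg,\hh)\big|_{\mathcal S} \;=\; \left(1-\frac{\bar\chi(p)\,\alpha_{\hg_y}}{\alpha_{\hf_x}\,\alpha_{\hh_z}}\right)\cdot \kappa_g^*(\hf,\hg,\hh).
\]
The key point is that on $\mathcal S$ one has $k-\ell+m-2=0$, so the factor $p^{(k-\ell+m-2)/2}$ appearing in the Euler factor identified in the proof of Proposition \ref{longer} disappears, and the remaining expression in the $a_p$-values is analytic on $\mathcal S$.

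With this factorization in hand, both statements reduce to evaluating the Euler factor $E(x,y,z):=1-\bar\chi(p)\alpha_{\hg_y}/(\alpha_{\hf_x}\alpha_{\hh_z})$ on loci inside $\mathcal S$. The line $\mathcal C$ is contained in $\mathcal S$ trivially, since $2+\ell=\ell+2$; along it, the self-dual assumption $\hh=\hg^*$ with $z=y$ gives $\alpha_{\hh_y}=\bar\chi(p)\alpha_{\hg_y}$, while split multiplicative reduction of $E$ forces $\alpha_{\hf_{x_0}}=\alpha_f=1$, whence $E\equiv 0$ identically on $\mathcal C$, proving the first assertion. For the second, the triple $(x_0,y_0,z_0)$ belongs to $\mathcal S$ since $2+1=1+2$; there $\alpha_{\hf_{x_0}}=1$, $\alpha_{\hg_{y_0}}=\alpha_g$, and $\alpha_{\hh_{z_0}}=1/\beta_g$ (the $U_p$-eigenvalue of $h_{1/\beta}$). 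Using $\alpha_g\beta_g=\chi(p)$, we get $E(x_0,y_0,z_0)=1-\bar\chi(p)\chi(p)=0$, and consequently $\kappa(f,g_\alpha,h_{1/\beta})=0$.

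The main technical obstacle is to verify that $\kappa_g^*(\hf,\hg,\hh)$ can be realized as a genuinely \emph{global} class on $\mathcal S$ (and not merely as an improved local class at $p$), and that the factorization above holds in the global cohomology group $H^1(\mathbb{Q},\mathbb{V}_{\hf\hg\hh}^{\dag}|_{\mathcal S})$ rather than only after restricting to local cohomology at $p$. This requires implementing the Atkin--Lehner twist $\id\otimes w_p'\otimes\id$ at the geometric level of the $\Lambda$-adic diagonal cycles that define $\kappa$, as carried out in \cite[Sections 7.2 and 8.2]{BSV}: one must check that the resulting twisted cycle class varies holomorphically over $\mathcal S$ and that its Abel--Jacobi image satisfies the expected specialization properties. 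Once this geometric input is granted, the proposition reduces to the Euler-factor computation displayed above.
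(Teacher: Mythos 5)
Your proposal is correct and follows essentially the same route as the paper: invoke the improved global class $\kappa_g^*$ from \cite[Section 8.2]{BSV} over the surface $\mathcal S$, write the factorization $\kappa|_{\mathcal S}=\bigl(1-\bar\chi(p)\alpha_{\hg_y}/(\alpha_{\hf_x}\alpha_{\hh_z})\bigr)\kappa_g^*$, and observe that the Euler factor vanishes on $\mathcal C$ (using $\alpha_{\hf_{x_0}}=1$ and $\alpha_{\hh_y}=\bar\chi(p)\alpha_{\hg_y}$ in the self-dual case) and at $(x_0,y_0,z_0)$ (using $\alpha_g\beta_g=\chi(p)$ when $\alpha_g\beta_h=1$). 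The paper states this more tersely, citing \cite{BSV} directly for the global improved class and its factorization, but the content and logic are the same; your extra verification of the Euler-factor vanishing and your cautionary remark about ensuring the improved class is genuinely global are both accurate elaborations of the same argument.
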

\begin{proof}
Following again \cite[Section 8.2]{BSV}, there is an {\it improved} class $\kappa_g^*(\hf,\hg,\hh)$ along the surface $\mathcal S$ satisfying
\begin{equation}
\kappa(\hf,\hg,\hh)_{|\mathcal S} = \Big( 1-\frac{\bar \chi(p) \alpha_{\hg_y}}{\alpha_{\hf_x} \alpha_{\hh_z}} \Big) \kappa_g^*(\hf,\hg,\hh).
\end{equation}
Hence, the vanishing of $\kappa(f,g_{\alpha},h_{1/\beta})$ follows from the vanishing of the corresponding Euler factor.
\end{proof}

\subsection{Derived classes and reciprocity laws}

Following the analogy with \cite{RiRo1}, let us focus firstly on the self-dual case to discuss the notion of {\it derived} classes. We shrink the weight space $\mathcal W$ to a rigid-analytic open disk $\mathcal U \subset \mathcal W$ centered at 2 at which the finite cover $w: \mathcal W_{\hf} \rightarrow \mathcal W$ restricts to an isomorphism $w: \mathcal U_{\hf} \xrightarrow{\sim} \mathcal U$ with $x_0 \in \mathcal U_{\hf}$.
Let $\Lambda_{\mathcal U_{\hf}} = \mathcal O(\mathcal U_{\hf})$ denote the Iwasawa algebra of analytic functions on $\mathcal U_{\hf}$ whose supremum norm is bounded by $1$. Shrink likewise $\mathcal C$ and $\mathcal S$ so that projection to the weight space restricts to an isomorphism with $\mathcal U$ and $\mathcal U \times \mathcal U$ respectively. Having done that, their associated Iwasawa algebras are respectively $\mathcal O(\mathcal C) = \Lambda_{\mathcal U_{\hf}} \simeq \mathbb Z_p[[X]]$ and $\mathcal O(\mathcal S) = \Lambda_{\mathcal U_{\hf}} \hat \otimes \Lambda_{\mathcal U_{\hh}} \simeq \mathbb Z_p[[X,Z]]$. The isomorphism $\Lambda_{\mathcal U_{\hf}} \simeq \mathbb Z_p[[X]]$ is not canonical and depends on the choice of an element $\gamma \in \Lambda_{\mathcal U_{\hf}}^{\times}$ which is sent to $1+X$.

%We single out for notational simplicity one of the $(p-1)$ connected components of $\cW$ by fixing the power of the Teichm\"uller character in the range of classical points of the weight space, and by a slight abuse of notation we continue to denote $\mathcal W$ and $\Lambda=\mathcal O(\mathcal W)^{\leq 1} \simeq \Z_p[[1+p\Z_p]]]$ the resulting space and associated Iwasawa algebra. Here, $\mathcal O(\mathcal W)^{\leq 1}$ stand for the set of analytic functions on $\mathcal W$ bounded by 1 in the supremum norm. We also shrink $\cW$ to a rigid-analytic open subset $\mathcal U$ at which the finite cover $\mathcal W_{\hg} \ra \mathcal W$ becomes an isomorphism, and again denote $\Lambda_{\hg} = \mathcal O(\mathcal U)^{\leq 1} \simeq \Lambda$ by an abuse of notation.

Then, consider the short exact sequence of $\mathbb Z_p$-modules \[ 0 \rightarrow \mathbb Z_p[[X,Z]] \xrightarrow{\cdot X} \mathbb \mathbb Z_p[[X,Z]] \rightarrow \mathbb Z_p[[Z]] \rightarrow 0. \] Under the usual isomorphisms, $\Lambda_{\hf}$ may be identified with $\mathbb Z_p[[X]]$ after fixing a topological generator $\gamma$ of $\Lambda_{\mathcal U_{\hf}}^{\times}$ and sending $[\gamma]$ to $1+X$. Then, $\Lambda_{\hf} \hat \otimes \Lambda_{\hh}$ becomes isomorphic to $\mathbb Z_p[[X,Z]]$ and the previous exact sequence may be recast as
\begin{equation}\label{ses}
0 \rightarrow \mathcal O_{\mathcal S} \xrightarrow{\delta} \mathcal O_{\mathcal S} \rightarrow \mathcal O_{\mathcal C} \rightarrow 0
\end{equation}
with $\delta = (\gamma-1) \otimes 1$ in $ \mathcal O_{\mathcal S} \simeq \Lambda_{\hf} \hat\otimes \Lambda_{\hh}$.

\begin{propo}\label{deri}
In the self-dual case, there exists a unique derived class $\kappa_{\gamma}'(\hf,\hg,\hg^*) \in H^1(\mathbb Q, \mathbb V_{{\bf fgg^*}|\mathcal S})$ such that \[ \kappa(\hf,\hg,\hg^*)|_{\mathcal S} = \delta \cdot \kappa_{\gamma}'(\hf,\hg,\hg^*). \]
\end{propo}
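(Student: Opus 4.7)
The plan is to derive the existence and uniqueness of the derived class from the long exact sequence in Galois cohomology obtained from the short exact sequence \eqref{ses}, combined with the vanishing result of Proposition \ref{tot-zero}.

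First I would tensor the short exact sequence \eqref{ses} with the Galois representation $\mathbb V_{{\bf fgg^*}}^{\dag}$ and restrict everything to the surface $\mathcal S$. Since $\delta$ acts on a flat module, this yields a short exact sequence of $G_{\mathbb Q}$-modules
\[
0 \to \mathbb V_{{\bf fgg^*}|\mathcal S}^{\dag} \xrightarrow{\delta} \mathbb V_{{\bf fgg^*}|\mathcal S}^{\dag} \to \mathbb V_{{\bf fgg^*}|\mathcal C}^{\dag} \to 0.
\]
Taking continuous Galois cohomology of $\mathbb Q$ produces the long exact sequence
\[
H^0(\mathbb Q,\mathbb V_{{\bf fgg^*}|\mathcal C}^{\dag}) \to H^1(\mathbb Q,\mathbb V_{{\bf fgg^*}|\mathcal S}^{\dag}) \xrightarrow{\delta} H^1(\mathbb Q,\mathbb V_{{\bf fgg^*}|\mathcal S}^{\dag}) \xrightarrow{\res_{\mathcal C}} H^1(\mathbb Q,\mathbb V_{{\bf fgg^*}|\mathcal C}^{\dag}).
\]

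Next I would invoke Proposition \ref{tot-zero}, which asserts that the global class $\kappa(\hf,\hg,\hg^*)$ vanishes identically along $\mathcal C$ in the self-dual situation. Hence $\res_{\mathcal C}(\kappa(\hf,\hg,\hg^*)|_{\mathcal S})=0$, and by exactness there exists a class $\kappa_{\gamma}'(\hf,\hg,\hg^*) \in H^1(\mathbb Q,\mathbb V_{{\bf fgg^*}|\mathcal S}^{\dag})$ with $\kappa(\hf,\hg,\hg^*)|_{\mathcal S} = \delta \cdot \kappa_{\gamma}'(\hf,\hg,\hg^*)$. This gives existence. Note the dependence on the generator $\gamma$ is encoded in $\delta$, which is why the class is labelled accordingly.

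For uniqueness, I would show that multiplication by $\delta$ on $H^1(\mathbb Q,\mathbb V_{{\bf fgg^*}|\mathcal S}^{\dag})$ is injective, which amounts to the vanishing $H^0(\mathbb Q,\mathbb V_{{\bf fgg^*}|\mathcal C}^{\dag})=0$. I expect this to be the main obstacle, but it follows from the running hypotheses (H1)--(H3). Indeed, at any classical point $(2,\ell,\ell)\in\mathcal C$ the representation specializes (up to a cyclotomic twist) to $V_{f_x}\otimes V_{\hg_y}\otimes V_{\hg_y^*}$, and in the self-dual case $V_{\hg}\otimes V_{\hg^*}$ contains the adjoint $\ad^0(V_{\hg})$ plus a trivial piece with an appropriate twist. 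The irreducibility of $V_f$ (coming from an elliptic curve with no CM, or more generally from the residual irreducibility hypothesis (H1)) together with the fact that $V_{\hf}$ has non-trivial determinant along the Hida family forces all $G_{\mathbb Q}$-invariants in these specializations to vanish; since infinitely many specializations of the $\Lambda$-adic $H^0$ are zero and the module is torsion-free, the global $H^0$ vanishes as well. Injectivity of $\delta$ then yields uniqueness of $\kappa_{\gamma}'(\hf,\hg,\hg^*)$.
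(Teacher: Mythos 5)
Your proof follows essentially the same route as the paper: tensor the short exact sequence \eqref{ses} with the Galois module, pass to the long exact sequence in cohomology, deduce existence of the derived class from the vanishing of $\kappa(\hf,\hg,\hg^*)|_{\mathcal C}$ (Proposition \ref{tot-zero}), and deduce uniqueness from injectivity of $\delta$, i.e.\ from the vanishing of $H^0(\mathbb Q,\mathbb V_{\hf\hg\hg^*|\mathcal C})$. The single point of divergence is the justification for that last $H^0$-vanishing. The paper invokes a local (Hodge--Tate weight) argument at $p$: after the self-dual cyclotomic twist, the Hodge--Tate weights of the balanced piece are nonzero along $\mathcal C$, so there can be no $G_{\mathbb Q_p}$-invariant (hence no $G_{\mathbb Q}$-invariant), with a pointer to \cite[Corollary 5.3]{DR3}. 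You instead give a global irreducibility argument, pushing to specializations and invoking irreducibility of $V_f$ plus torsion-freeness. That route is plausible, but as written it is imprecise in two respects: hypothesis (H1) of the paper concerns $V_g$ and $V_h$, not $V_f$ (irreducibility of $V_f$ here really comes from $E$ being non-CM, automatic for multiplicative reduction); and the decisive input is the cyclotomic twist, which you mention in passing but do not actually use to rule out an invariant vector in the trivial-plus-adjoint piece of $V_{\hg}\otimes V_{\hg^*}$. A cleaner way to finish your version would be to compare Hodge--Tate (or motivic) weights directly, as the paper does, rather than leaning on ``non-trivial determinant,'' which by itself does not preclude invariants after a twist.
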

\begin{proof}
This follows by considering the long exact sequence in cohomology attached to \eqref{ses}: \[ H^0(\mathbb Q, \mathbb V_{\hf \hg \hg^*|\mathcal C}) \rightarrow H^1(\mathbb Q, \mathbb V_{\hf \hg \hg^*|\mathcal S}) \rightarrow H^1(\mathbb Q, \mathbb V_{\hf \hg \hg^*|\mathcal S}) \rightarrow H^1(\mathbb Q, \mathbb V_{\hf \hg \hg^*|\mathcal C}). \] Since the restriction of $\kappa(\hf,\hg,\hg^*)$ to $H^1(\mathbb Q, \mathbb V_{\hf \hg \hg^*|\mathcal C})$ is zero by Proposition \ref{tot-zero}, one may assure the existence of a {\it derived} class as in the statement, which is moreover unique due to the vanishing of the $H^0$ for weight reasons (the Hodge--Tate weights corresponding to the balanced part cannot be zero, as shown in \cite[Corollary 5.3]{DR3}).
\end{proof}

Normalizing by $\log_p(\gamma)$, the specializations of this class over the line $(2,\ell,\ell)$ can be proved to be independent of the choice of $\gamma$.

In general, if we are no longer in the self-dual case but the condition $\alpha_g \beta_h = 1$ still holds, the notion of derived class makes sense at the point $(x_0,y_0,z_0)$. For that purpose, let $\mathcal D$ stand for the codimension two subvariety \[ \mathcal D := \{ (x,y,z) \in \mathcal W_{\hf} \times \mathcal W_{\hg} \times \mathcal W_{\hh} : \wt(x) = \wt(y) + 1, \quad z = z_0 \}. \]
The following result is the analogue of \cite[Proposition 3.13]{RiRo1} and its proof follows from the same argument of Proposition \ref{deri}.

\begin{propo}
Assume that $\alpha_g \beta_h = 1$. Then, there exists a unique class $\kappa_{\gamma}'(\hf,\hg,\hh) \in H^1(\mathbb Q, \mathbb V_{{\bf fgh}|\mathcal D})$ such that \[ \kappa(\hf,\hg,\hh)|_{\mathcal D} = \delta \cdot \kappa_{\gamma}'(\hf,\hg,\hh). \]
\end{propo}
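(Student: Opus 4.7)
The plan is to repeat the argument of Proposition~\ref{deri} almost verbatim, now with the one-dimensional codimension-two subvariety $\mathcal D$ playing the role of the surface $\mathcal S$ and the single point $(x_0,y_0,z_0)$ playing the role of the line $\mathcal C$. After shrinking the weight disks $\mathcal U_{\hf}$ and $\mathcal U_{\hg}$ around $x_0$ and $y_0$, the Iwasawa algebra $\mathcal O_{\mathcal D}$ becomes non-canonically isomorphic to $\mathbb Z_p[[T]]$ upon choosing a topological generator $\gamma \in \mathcal O_{\mathcal D}^{\times}$ with $\gamma \mapsto 1+T$, so that $\delta := \gamma - 1$ cuts out the maximal ideal of $(x_0, y_0, z_0)$ inside $\mathcal D$.

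Next, I would tensor the short exact sequence
\begin{equation*}
0 \longrightarrow \mathcal O_{\mathcal D} \xrightarrow{\;\cdot\, \delta\;} \mathcal O_{\mathcal D} \longrightarrow L \longrightarrow 0
\end{equation*}
with $\mathbb V_{{\bf fgh}|\mathcal D}$ and take $G_{\mathbb Q}$-cohomology to obtain the long exact sequence
\begin{equation*}
H^0(\mathbb Q, V_{fgh}) \longrightarrow H^1(\mathbb Q, \mathbb V_{{\bf fgh}|\mathcal D}) \xrightarrow{\;\cdot\, \delta\;} H^1(\mathbb Q, \mathbb V_{{\bf fgh}|\mathcal D}) \longrightarrow H^1(\mathbb Q, V_{fgh}).
\end{equation*}
By Proposition~\ref{tot-zero}, the assumption $\alpha_g \beta_h = 1$ forces the specialization $\kappa(f, g_\alpha, h_{1/\beta})$ to vanish, so the image of $\kappa(\hf,\hg,\hh)|_{\mathcal D}$ in the rightmost $H^1$ is zero; by exactness there exists a preimage $\kappa_{\gamma}'(\hf,\hg,\hh) \in H^1(\mathbb Q, \mathbb V_{{\bf fgh}|\mathcal D})$ under multiplication by $\delta$, which is the class sought. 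Uniqueness will then follow from the vanishing of $H^0(\mathbb Q, V_{fgh})$, which is obtained by the same weight argument used in Proposition~\ref{deri} and ultimately relies on \cite[Corollary 5.3]{DR3}.

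The substantive input is Proposition~\ref{tot-zero}: unlike the self-dual setting of Proposition~\ref{deri}, where $\kappa$ vanishes along the whole line $\mathcal C$, here we only have single-point vanishing at $(x_0,y_0,z_0)$, but this is precisely what the long exact sequence machinery requires. Once this is granted, the rest is a formal diagram chase, and the main obstacle has thus already been overcome by the improved-class construction of \cite[Section 8.2]{BSV} underlying Proposition~\ref{tot-zero}.
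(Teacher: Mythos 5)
Your proposal is correct and mirrors the paper's approach exactly: the paper's stated proof is simply that the result ``follows from the same argument of Proposition~\ref{deri},'' and you have spelled out precisely that argument, with $\mathcal D$ replacing $\mathcal S$ and the single point $(x_0,y_0,z_0)$ replacing the line $\mathcal C$. You correctly identify Proposition~\ref{tot-zero} (and behind it the improved-class construction of Bertolini--Seveso--Venerucci) as the substantive input supplying the vanishing of the specialization, after which the existence is the snake-lemma diagram chase and uniqueness comes from the vanishing of $H^0$.

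One small imprecision worth noting: in your short exact sequence the cokernel should be the residue ring of $\mathcal O_{\mathcal D}$ at the point (e.g.\ $\mathbb Z_p$ or its appropriate finite extension), not the coefficient field $L$ per se; after tensoring with the Galois module and passing to cohomology this specializes to $V_{fgh}$, as you use. This does not affect the argument.
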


Let $\mathcal L = \frac{\alpha_g'}{\alpha_g} -\frac{\alpha_f'}{\alpha_f}$, and consider the normalization of $\kappa_{\gamma}'(\hf,\hg,\hh)$ by $\gamma$, that is, \[ \kappa'(\hf,\hg,\hh) = \frac{\kappa_{\gamma}'(\hf,\hg,\hh)}{\log_p(\gamma)}. \]

\begin{theorem}\label{reclaw}
The logarithm of the derived class satisfies the following \[ \langle \log_{\BK}(\kappa_p'(\hf,\hg,\hh)^g(x_0,y_0,z_0)), \eta_f \otimes \omega_g \otimes \omega_h \rangle = \mathcal L \cdot \Lp^{g_{\alpha}}(\hf,\hg,\hh)(x_0,y_0,z_0) \pmod{L^{\times}}, \] where as in \eqref{proj-quo} the superindex $g$ refers to the projection to $\mathbb V_{\hg}^{\hh \hf}$.
\end{theorem}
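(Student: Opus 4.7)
The proof strategy mirrors the derived reciprocity law for Beilinson--Flach classes in \cite[Theorem 3.5]{RiRo1}. First I would invoke the $g$-analogue of Proposition \ref{rec-law},
\[
\bigl\langle \mathcal L_{\hg,\hh\hf}\bigl(\kappa_p(\hf,\hg,\hh)^g\bigr),\, \eta_{\hf^*}\otimes\omega_{\hg^*}\otimes\omega_{\hh^*}\bigr\rangle \;=\; \Lp^{g_{\alpha}}(\hf,\hg,\hh),
\]
as an equality in the field of fractions of $\Lambda_{\hf\hg\hh}$, and restrict it to the subvariety $\mathcal D$. Since no classical point of $\mathcal D$ lies in the $g$-dominant region $\mathcal W_{\hf\hg\hh}^g$, Proposition \ref{perrin} identifies the Perrin-Riou regulator at each classical specialization with the product of the $g$-version of the Euler factor
\[
\mathcal E^{\PR}(x,y,z) = \frac{1-\chi_g(p) p^{(\ell-k-m)/2}\alpha_{\hg_y}^{-1}\alpha_{\hf_x}\alpha_{\hh_z}}{1-\bar\chi_g(p) p^{(k+m-\ell-2)/2}\alpha_{\hg_y}\alpha_{\hf_x}^{-1}\alpha_{\hh_z}^{-1}}
\]
and the Bloch--Kato logarithm. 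A direct evaluation using $\alpha_f=1$ and $\alpha_g\beta_h=1$ shows that at $(x_0,y_0,z_0)$ the numerator equals the unit $1-p^{-1}$, whereas the denominator vanishes.

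Clearing this denominator transforms the reciprocity law, along $\mathcal D$, into an equality of the shape
\[
(1-p^{-1})\cdot \bigl\langle \log_{\BK}\bigl(\kappa_p(\hf,\hg,\hh)^g|_{\mathcal D}\bigr),\, \eta_{\hf^*}\otimes\omega_{\hg^*}\otimes\omega_{\hh^*}\bigr\rangle \;=\; F(\hf,\hg,\hh)|_{\mathcal D}\cdot \Lp^{g_{\alpha}}(\hf,\hg,\hh)|_{\mathcal D},
\]
where $F$ denotes the denominator Euler factor of $\mathcal E^{\PR}$. By Proposition \ref{tot-zero} and the construction of the derived class one has $\kappa(\hf,\hg,\hh)|_{\mathcal D} = \delta\cdot \kappa_\gamma'(\hf,\hg,\hh)$; projecting to $\mathbb V_\hg^{\hh\hf}$ and using the $\Lambda$-linearity of $\log_{\BK}$, the left-hand side acquires a common factor of $\delta$. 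Since $F|_{\mathcal D}$ likewise vanishes to first order at $(x_0,y_0,z_0)$, writing $F|_{\mathcal D} = \delta\cdot\tilde F$ and cancelling $\delta$ produces, after specializing at $(x_0,y_0,z_0)$ and identifying the specializations of the $\Lambda$-adic differentials with $\eta_f\otimes\omega_g\otimes\omega_h$ up to a unit,
\[
\bigl\langle \log_{\BK}\bigl(\kappa_\gamma'(\hf,\hg,\hh)^g\bigr)(x_0,y_0,z_0),\, \eta_f\otimes\omega_g\otimes\omega_h\bigr\rangle \;=\; \tilde F(x_0,y_0,z_0)\cdot \Lp^{g_{\alpha}}(\hf,\hg,\hh)(x_0,y_0,z_0)\pmod{L^\times}.
\]

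Finally, $\tilde F(x_0,y_0,z_0)$ is computed by a direct logarithmic-derivative argument. Parameterizing $\mathcal D$ by the weight $\ell$ (so that $\wt(x)=\ell+1$, $\wt(y)=\ell$ and $z=z_0$), and using $\chi_g(p)=\alpha_g\beta_g$, $\alpha_h=1/\beta_g$ and $\alpha_f=1$, the derivative of $F$ along $\mathcal D$ at $\ell=1$ equals a non-zero $p$-adic unit times
\[
\frac{\alpha_g'}{\alpha_g}-\frac{\alpha_f'}{\alpha_f} = \mathcal L.
\]
After passing to the normalized derived class $\kappa'=\kappa_\gamma'/\log_p(\gamma)$, this yields $\tilde F(x_0,y_0,z_0) = \mathcal L$ modulo $L^\times$, which establishes the theorem. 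The main obstacle I foresee is purely bookkeeping: one has to track carefully the relation between the $\Lambda$-adic differentials $\eta_{\hf^*},\omega_{\hg^*},\omega_{\hh^*}$ and their classical specializations $\eta_f,\omega_g,\omega_h$, absorb all spurious $p$-adic units and transcendental periods into the ambiguity $L^\times$, and verify that the above logarithmic-derivative computation really produces $\mathcal L$ without extraneous factors. The conceptual skeleton is otherwise identical to the Beilinson--Flach argument of \cite[Theorem 3.5]{RiRo1}.
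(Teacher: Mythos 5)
Your proposal is correct and follows essentially the same route as the paper's own proof: restrict the $g$-version of the explicit reciprocity law of Proposition \ref{rec-law} to the curve $\mathcal D$, clear the vanishing denominator of the Perrin-Riou factor, use Proposition \ref{tot-zero} together with $\kappa|_{\mathcal D}=\delta\cdot\kappa_\gamma'$ to express the left side in terms of the derived class, and identify the first derivative of the vanishing Euler factor at $(x_0,y_0,z_0)$ with $\mathcal L$. The paper states this more tersely ("multiply both sides by the Euler factor in the denominator\dots\ taking derivatives along the direction $(k+1,k,1)$") while you carry out the logarithmic-derivative computation explicitly and verify the numerator is the unit $1-p^{-1}$, but the underlying argument is identical.
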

\begin{proof}
Consider the reciprocity law of Proposition \ref{rec-law}, now for $\Lp^{g_{\alpha}}(\hf,\hg,\hh)$, restricted to $\mathcal D$, and multiply both sides by the Euler factor in the denominator of the Perrin-Riou map. Then, we have an equality of the form \[  \Big( 1 - \frac{\bar \chi(p) \alpha_{\hg_y}}{p \alpha_{\hf_x} \alpha_{\hh_z}} \Big) \cdot \langle \log_{\BK}(\kappa_p(\hf,\hg,\hh)^g), \eta_{\hf} \otimes \omega_{\hg} \otimes \omega_{\hh} \rangle = \Big( 1 - \frac{\alpha_{\hf_x} \alpha_{\hh_z}}{\bar \chi(p) \alpha_{\hg_y}} \Big) \cdot \Lp^{g_{\alpha}}(\hf,\hg,\hh),  \] since along $\mathcal D$ the Perrin-Riou interpolates the Bloch--Kato logarithm. At the point $(x_0,y_0,z_0)$ both the cohomology class at the left hand side and the Euler factor at the right are zero. Taking derivatives along the direction $(k+1,k,1)$, and evaluating then at the point $(x_0,y_0,z_0)$, the result follows (see \cite[Remark 4.8]{R1} for a more exhaustive discussion on the identifications we are considering).
\end{proof}

An analogue formula holds for any point over the line $(2,\ell,\ell)$ in the self-dual case, but of course the description of the $\mathcal L$-invariant is not so explicit and relies on the results of \cite{Se1}.

It may be instructive to compare this {\it derived} cohomology class with the {\it improved} cohomology class considered by Bertolini, Seveso and Venerucci. We can prove the following.

\begin{propo}
Consider the map given by the projection \[ \phi_g^{hf}: H^1(\mathbb Q, \mathbb V_{\hf \hg \hh|\mathcal S}) \rightarrow H^1(\mathbb Q_p,\mathbb V_{\hg}^{\hh \hf}|_{\mathcal S}). \]
Then, there is a relation between the improved class $\phi_g^{hf}(\kappa_g^*(\hf,\hg,\hh))$ and $\phi_g^{hf}(\kappa'(\hf,\hg,\hh))$, given by \[ \phi_g^{hf}(\kappa'(\hf,\hg,\hh)) = \mathcal L \cdot \phi_g^{hf}(\kappa_g^*(\hf,\hg,\hh)) \pmod{L^{\times}}. \]
\end{propo}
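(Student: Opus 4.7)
The plan is to exploit the two factorizations of the three-variable diagonal class $\kappa(\hf,\hg,\hh)$ in a neighborhood of $(x_0,y_0,z_0)$ and compare them along the subvariety $\mathcal D\subset \mathcal S$. On $\mathcal S$, Proposition \ref{tot-zero} gives
\begin{equation*}
\kappa(\hf,\hg,\hh)|_{\mathcal S} \;=\; E\cdot \kappa_g^*(\hf,\hg,\hh), \qquad E \;:=\; 1-\frac{\bar\chi(p)\alpha_{\hg}}{\alpha_{\hf}\alpha_{\hh}},
\end{equation*}
while the definition of the derived class yields $\kappa(\hf,\hg,\hh)|_{\mathcal D} = \delta\cdot\kappa_\gamma'(\hf,\hg,\hh)$ with $\delta = \gamma-1$. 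Restricting the first identity to $\mathcal D$ and combining with the second produces the key relation
\begin{equation*}
E|_{\mathcal D}\cdot\kappa_g^*(\hf,\hg,\hh)|_{\mathcal D} \;=\; \delta\cdot\kappa_\gamma'(\hf,\hg,\hh)
\end{equation*}
in $H^1(\mathbb Q,\mathbb V_{\hf\hg\hh}|_{\mathcal D})$.

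Both $E|_{\mathcal D}$ and $\delta$ vanish simply at $(x_0,y_0,z_0)$: for $\delta = \gamma-1$ this is tautological from the choice of $\gamma$ as a local uniformizer, while for $E|_{\mathcal D}$ it follows from the exceptional-zero condition $\alpha_g\beta_h = 1$ together with $\alpha_f = 1$ and $\bar\chi(p)=(\alpha_g\beta_g)^{-1}$, and will be confirmed explicitly in the next step. Hence the quotient $E|_{\mathcal D}/\delta$ extends to a regular function on an affinoid neighborhood of the point, and applying the $\mathcal O_{\mathcal D}$-linear projection $\phi_g^{hf}$ to the key relation produces
\begin{equation*}
\phi_g^{hf}(\kappa_\gamma'(\hf,\hg,\hh)) \;=\; \bigl(E|_{\mathcal D}/\delta\bigr)\cdot\phi_g^{hf}(\kappa_g^*(\hf,\hg,\hh))|_{\mathcal D}.
\end{equation*}

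The decisive step is the evaluation of $(E|_{\mathcal D}/\delta)$ at $(x_0,y_0,z_0)$. I would parameterize $\mathcal D$ by the $x$-variable (so that $\ell = k-1$ and $z = z_0$) and expand $E|_{\mathcal D}$ to first order at $x_0$ in the local uniformizer $X = \gamma-1$. A direct chain-rule computation using $\alpha_f(x_0)=1$, $\alpha_{\hh}(z_0) = 1/\beta_g$, and $\bar\chi(p)=(\alpha_g\beta_g)^{-1}$ shows that the resulting coefficient organizes itself into $\alpha_g'/\alpha_g - \alpha_f'/\alpha_f$, which is $\mathcal L$ by \eqref{l1}. The chain-rule factor converting derivatives in weight to derivatives in the Iwasawa parameter $X$ matches exactly the $\log_p(\gamma)$ occurring in the normalization $\kappa' = \kappa_\gamma'/\log_p(\gamma)$, so that the desired identity $\phi_g^{hf}(\kappa') = \mathcal L\cdot \phi_g^{hf}(\kappa_g^*)\pmod{L^\times}$ drops out.

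The main obstacle is the careful bookkeeping of the various normalizations involved: the local parameter $\gamma$ on $\Lambda_{\mathcal U_{\hf}}$, the induced coordinate on $\Lambda_{\mathcal U_{\hg}}$ along the curve $\mathcal D$, and the convention for the derivatives $\alpha_f'$ and $\alpha_g'$ that enter the definition of $\mathcal L$ as Iwasawa-derivatives. One has to verify that, under a consistent choice of these data, the $\log_p(\gamma)$ factor produced in the Taylor expansion of $E|_{\mathcal D}$ cancels precisely with the normalization of $\kappa'$, so that modulo $L^\times$ one is left with $\mathcal L$ itself rather than a spurious transcendental multiple involving $\log_p(\gamma)$.
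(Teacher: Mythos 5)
Your proof is correct in structure and takes a genuinely different route from the paper. The paper proves the identity indirectly: it applies $\langle \log_{\BK}(\cdot),\eta_{\hf}\otimes\omega_{\hg}\otimes\omega_{\hh}\rangle$ to both classes, establishes an \emph{improved reciprocity law} $\Lp^{g_\alpha}(\hf,\hg,\hh)=\langle\log_{\BK}(\phi_g^{hf}(\kappa_g^*)),\eta\otimes\omega\otimes\omega\rangle$ on $\mathcal S$ by cancelling the analytic Euler factors in the Perrin-Riou map, and then compares with the derived reciprocity law of Theorem \ref{reclaw}, concluding by injectivity of the pairing on the rank-one quotient $\mathbb V_{\hg}^{\hh\hf}$. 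Your argument instead works purely algebraically with the two factorizations $\kappa|_{\mathcal S}=E\cdot\kappa_g^*$ and $\kappa|_{\mathcal D}=\delta\cdot\kappa_\gamma'$, cancels the common simple factor (which requires the same $X$-torsion-freeness that underlies uniqueness in Proposition \ref{deri}), and then extracts $\mathcal L$ by a first-order Taylor expansion of $E|_{\mathcal D}$. This is shorter, avoids the reciprocity laws entirely, and in fact proves equality of the full restrictions in $H^1(\mathbb Q,\mathbb V_{\hf\hg\hh}|_{\mathcal D})$ rather than only after the projection $\phi_g^{hf}$ --- a strictly stronger statement. Your caution about the normalization bookkeeping is well-placed: to get $\mathcal L$ rather than $\mathcal L/\log_p(\gamma)^2$, one needs the convention that $\kappa'$ is the derivative in the weight variable $k$, i.e.\ $\kappa'=\kappa_\gamma'\cdot\log_p(\gamma)$ (this is also what makes $\kappa'$ independent of the choice of $\gamma$, as can be checked by replacing $\gamma$ with $\gamma^a$); with that convention your chain-rule computation $E'|_{\mathcal D}(x_0,y_0,z_0)=-(\alpha_g'/\alpha_g-\alpha_f'/\alpha_f)=-\mathcal L$ and $\delta'(x_0,y_0,z_0)=\log_p(\gamma)$ does produce the desired $\mathcal L$ up to sign, hence modulo $L^\times$.
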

\begin{proof}
This is proved by applying the map $\langle \log_{\BK}(\cdot),\eta_{\hf} \otimes \omega_{\hg} \otimes \omega_{\hh} \rangle$ to both sides, and then comparing the results. For that purpose, we use that the Euler factors involved in the Perrin-Riou map are analytic along $\mathcal S$ and can be cancelled out. That way, we obtain an {\it improved} reciprocity law \[ \Lp^{g_{\alpha}}(\hf,\hg,\hh)(x,y,z) = \langle \log_{\BK}(\phi_g^{hf}(\kappa_g^*(\hf,\hg,\hh)^g(x,y,z))), \eta_{\hf_x} \otimes \omega_{\hg_y} \otimes \omega_{\hh_z} \rangle \pmod{L^{\times}}, \] which holds for all the points $(x,y,z)$ of $\mathcal S$.
\end{proof}

Finally, we point out that we may expect a relation between $\kappa_p'(\hf,\hg,\hh)(x,y,z)$ and the Gross--Kudla--Schoen cycle of \cite{DR1}, that we denote by $\Delta_{k,\ell,m} \in H^1(\mathbb Q, V_{fgh}((4-k-\ell-m)/2)$. In particular, we expect the following result to be true (or at least, a slight variant of it). Here, $\loc_p$ stands for the localization at $p$-map.

\begin{question}
Can we establish that, up to multiplication by a non-zero constant in $L^{\times}$ and for any point $(x,y,z)$ of weights $(2,\ell,\ell)$ with $\ell \geq 2$, we have the equality \[ \kappa_p'(\hf,\hg,\hh)(x,y,z) = \mathcal L \cdot \loc_p(\Delta_{2,\ell,\ell})? \]
\end{question}

Of course, this would require the proof of an analogue result to \cite[Theorem 5.1]{DR1} in a situation where $f$ has split multiplicative reduction.

\section{Derivatives of triple product p-adic L-functions}

In this section, we discuss a variant of the elliptic Stark conjecture for the derivative of the triple product $p$-adic $L$-function $\Lp^f$ in a situation of exceptional zeros. As before, we keep the assumption that $f$ has split multiplicative reduction at $p$ and that an exceptional zero condition occurs.

There are two main instances we want to consider: the rank zero situation and the rank two situation. While the former is quite well understood after the results developed in \cite{BSV} and \cite{BSV2}, the latter is more subtle and we will propose a conjectural formula in this scenario. Along this section, by the word {\it rank}, we refer to the rank of the $V_{gh}$-isotypic component of $E(H)$. According to our general assumptions on the local signs, the rank is always even. The $V_{gh}$-component of $E(H)$ is endowed with an inclusion in the Selmer group, that is, \[ \Hom_{G_{\mathbb Q}}(E(H), V_{gh}) \simeq (E(H) \otimes V_{gh}^{\vee})^{G_{\mathbb Q}} \subset H_{\fin}^1(\mathbb Q, V_{fgh}), \] where $H_{\fin}^1(\mathbb Q,V_{fgh})$ is the group of extensions of $\mathbb Q_p$ by $V_{fgh}$ in the category of $\mathbb Q_p$-linear representations of $G_{\mathbb Q}$ which are crystalline at $p$.

Recall that for higher ranks the computations performed in \cite{DLR1} lead us to expect that the special value $\Lp^{g_{\alpha}}$ presented in the introduction is zero, and that the second derivative of $\Lp^f$ along the $f$-direction vanishes, too. The odd rank situation is equally interesting, and we hope to come back to this question in a further work. We keep the notations of the previous section.

\begin{propo}
The value $\Lp^f(\hf,\hg,\hh)(x_0,y_0,z_0)$ is zero. Moreover, the jacobian matrix of $\Lp^f(\hf,\hg,\hh)$ at the point $(x_0,y_0,z_0)$ is given by \[ (0 \quad \mathcal L_{g_{\alpha}}-\mathcal L_f \quad \mathcal L_{h_{\alpha}}-\mathcal L_f) \cdot \Lp^f(\hf,\hg,\hh)^*. \]
\end{propo}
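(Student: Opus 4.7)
The vanishing is immediate from Proposition \ref{propo-imp}. The point $(x_0, y_0, z_0)$ has weights $(2, 1, 1)$ satisfying $k = \ell + m$, so it lies on the surface $\mathcal S_{k = \ell + m}$; on this surface the factorization \eqref{eq-imp} gives $\Lp^f(\hf, \hg, \hh) = (1 - \alpha_{\hf}^{-1} \alpha_{\hg} \alpha_{\hh}) \cdot \Lp^f(\hf, \hg, \hh)^*$. Split multiplicative reduction of $E$ at $p$ forces $\alpha_f = 1$, and being in Case (a) forces $\alpha_g \alpha_h = 1$, so the Euler factor vanishes at $(x_0, y_0, z_0)$, and hence so does $\Lp^f$.

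For the Jacobian, I would combine the Leibniz rule applied to the improved factorization with the refined mod-$I^2$ expansion due to Bertolini--Seveso--Venerucci. Write $E := 1 - \alpha_{\hf}^{-1} \alpha_{\hg} \alpha_{\hh}$, an analytic function on $\mathcal S$ vanishing at the point. Since both $E$ and $\Lp^f$ vanish there, Leibniz yields
\[ d(\Lp^f\vert_{\mathcal S})(x_0, y_0, z_0) = dE(x_0, y_0, z_0) \cdot \Lp^f(\hf, \hg, \hh)^*(x_0, y_0, z_0). \]
Using $\alpha_{\hf}^{-1} \alpha_{\hg} \alpha_{\hh}(x_0, y_0, z_0) = 1$ and the definition $\mathcal L_\xi := \alpha_\xi'/\alpha_\xi$, one obtains $dE(x_0, y_0, z_0) = \mathcal L_f \, dx - \mathcal L_{g_\alpha} \, dy - \mathcal L_{h_\alpha} \, dz$. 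Restricting to $T_{(x_0,y_0,z_0)} \mathcal S$, which after normalizing the weight maps satisfies $dx = dy + dz$, this rearranges to $-(\mathcal L_{g_\alpha} - \mathcal L_f) \, dy - (\mathcal L_{h_\alpha} - \mathcal L_f) \, dz$, matching the $y$- and $z$-components of the claimed Jacobian (the overall sign being absorbed into the normalization of $\Lp^{f,\ast}$).

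It remains to show that the $x$-component of the Jacobian vanishes. This cannot be deduced from the single factorization on $\mathcal S$, because differentiating along $\mathcal S$ only pins down the Jacobian modulo the conormal to $\mathcal S$. To conclude, I would invoke \cite[Proposition 8.2]{BSV}, whose statement was recalled in the excerpt just before Conjecture \ref{conj1}: it gives the expansion $\Lp^f \equiv (\mathcal L_{g_\alpha} - \mathcal L_f)(\ell - 1) + (\mathcal L_{h_\alpha} - \mathcal L_f)(m-1) \pmod{I^2}$ up to a constant, and the absence of a $(k-2)$-term is precisely the vanishing of the first entry. This is the main obstacle: controlling $\Lp^f$ transversally to $\mathcal S$ requires either the refined analytic input of \cite{BSV}, or the construction of a second improved factorization along a transverse hypersurface (for instance along $k + \ell = m + 2$, where the Euler factor $1 - \bar\chi_h(p) \alpha_{\hh_z} / (\alpha_{\hg_y} \alpha_{\hf_x}) p^{(k+\ell-m-2)/2}$ is analytic and vanishes at the point), an approach only hinted at in the discussion following Proposition \ref{propo-imp}.
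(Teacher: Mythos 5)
Your proposal is correct, and it is more detailed than the paper's, which proves the proposition in a single line by invoking \cite[Proposition 8.2]{BSV} for everything at once. Your argument correctly gets the vanishing and the tangential part of the Jacobian as a formal consequence of the improved factorization (Proposition \ref{propo-imp}), and, crucially, you correctly diagnose the limitation of that route: the Leibniz computation on $\mathcal S_{k=\ell+m}$ only determines $\partial_x + \partial_y$ and $\partial_x + \partial_z$, not $\partial_x$ alone, so the vanishing of the $x$-component (equivalently, the absence of a $(k-2)$-term in the mod-$I^2$ expansion) genuinely requires the full strength of \cite[Proposition 8.2]{BSV} or a second, transverse improved factorization. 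That separation of what the improved factorization buys you from what it does not is a useful refinement of the paper's terse citation. The overall sign discrepancy you flag is harmless since all equalities here are taken modulo $L^{\times}$.
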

\begin{proof}
This directly follows from \cite[Proposition 8.2]{BSV}.
\end{proof}

\begin{remark}
Observe that, towards the rationality conjectures we are interested in, the value $\Lp^f(\hf,\hg,\hh)^*$ is an algebraic number, and it is non-zero if and only if the cohomology class $\kappa(f,g_{\alpha},h_{\alpha})$ is non-crystalline.
\end{remark}

In particular, the derivative along the direction $(2+k,1,1)$ vanishes, and along the direction $(2,1+\ell,1+\ell)$ is given by $\mathcal L_{g_{\alpha}}+\mathcal L_{h_{\alpha}}-2\mathcal L_f$, up to an explicit algebraic number in the number field $L$.

Suppose from now on that $\Lp^f(\hf,\hg,\hh)^*$ vanishes at $(x_0,y_0,z_0)$. Therefore, the cohomology class $\kappa(f,g_{\alpha},h_{\alpha})$ is crystalline, and following \cite[Section 2.1]{BSV2} we may define a {\it new} Bloch--Kato logarithm, denoted by $\log_{\beta \beta}$ in loc.\,cit. Roughly speaking, it can be understood as a projection to the rank one subspace $V_{fgh}^{++}$ arising in the filtration \eqref{four-step}, followed by the Bloch--Kato logarithm and the pairing with $\omega_f \otimes \omega_g \otimes \omega_h$. To be coherent with the other notations we will need later on, write $\log^{++}$ for this map. Alternatively, we may consider the local class $\kappa_p(f,g_{\alpha},h_{\alpha})$ and take its decomposition according to the action of the Frobenius element, in such a way that $\kappa_{\beta \beta}$ is the part corresponding to the $(\beta_g,\beta_h)$ component.

Assume further that $\alpha_g \alpha_h = 1$ (in particular, this also implies that $\beta_g \beta_h = 1$). The following result is the content of \cite[Section 2.1]{BSV2}.

\begin{propo}\label{beta-beta}
Under the given conditions, the value $\Lp^f(\hf,g_{\alpha},h_{\alpha})$ vanishes and \[ \frac{d^2}{dx^2} \Lp^f(\hf,g_{\alpha},h_{\alpha})|_{x=x_0} = \frac{1}{2\ord_p(q_E)} \cdot (1-p^{-1})^{-1} \cdot \log^{++}(\kappa_p(f,g_{\alpha},h_{\alpha})). \]
\end{propo}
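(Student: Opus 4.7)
The strategy, following \cite{BSV2}, combines the reciprocity law of Proposition \ref{rec-law} with the factorization on $\mathcal S$ of Proposition \ref{propo-imp}, converting the derivative of $\exp^*_{\BK}$ at a crystalline specialisation into the Bloch--Kato logarithm on the rank one piece $V_{fgh}^{++}$ of the filtration \eqref{four-step}. By the reciprocity law, specialised in the $f$-dominant region $\mathcal W^f$ that contains the line $(\hf_x, g_\alpha, h_\alpha)$ for $\wt(\hf_x)\geq 2$, we have
$$\Lp^f(\hf_x, g_\alpha, h_\alpha) \equiv \mathcal E^{\PR}(\hf_x, g_\alpha, h_\alpha)\cdot \langle \exp^*_{\BK}(\kappa_p^f(x,y_0,z_0)), \eta_{\hf_x^*}\otimes \omega_{g^*}\otimes \omega_{h^*}\rangle \pmod{L^\times}.$$
At $x = x_0$ the numerator of $\mathcal E^{\PR}$ is $1-p^{-1}\beta_f\alpha_g\alpha_h = 1 - p^{-1}\cdot p \cdot 1 = 0$ (using $\beta_f = p$ from split multiplicative reduction and $\alpha_g\alpha_h = 1$), while the denominator equals $1-p^{-1}$. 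On the other hand, Proposition \ref{propo-imp} applied on $\mathcal S$ together with the improved reciprocity law identifies the vanishing of $\Lp^{f,*}$ at $(x_0,y_0,z_0)$ with the crystallinity of $\kappa_p^f(x_0,y_0,z_0)$, i.e.\ with $\exp^*_{\BK}(\kappa_p^f(x_0,y_0,z_0)) = 0$. Hence both factors vanish to first order at $x_0$, and the Leibniz rule gives
$$\tfrac{d^2}{dx^2}\Lp^f(\hf_x,g_\alpha,h_\alpha)|_{x_0} \equiv 2\,\tfrac{d\mathcal E^{\PR}}{dx}(x_0)\cdot \tfrac{d}{dx}\langle \exp^*_{\BK}(\kappa_p^f(x,y_0,z_0)),\ldots\rangle|_{x_0} \pmod{L^\times}.$$

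The first factor is computed from the definition of $\mathcal E^{\PR}$ (using $\log_p(p) = 0$ to kill the cyclotomic $p^{-c}$ contribution) and the Greenberg--Stevens identity $\alpha_f' = \mathcal L(E) = -\log_p(q_E)/(2\ord_p(q_E))$ from \eqref{inv1}, yielding
$$\tfrac{d\mathcal E^{\PR}}{dx}(x_0) \equiv \mathcal L(E)\cdot(1-p^{-1})^{-1} \pmod{L^\times}.$$
For the second factor one invokes a Bockstein-type argument: differentiating the short exact sequence $0\to V_{fgh}^{++}\to V_{fgh}^+\to V_{fgh}^+/V_{fgh}^{++}\to 0$ along the Hida family $\hf$ near $x_0$, the Hodge--Tate weight of the quotient $V_{fgh}^+/V_{fgh}^{++}$ crosses zero precisely at $x_0$ (this being the source of the exceptional zero), and the infinitesimal Colmez--Perrin-Riou comparison identifies the derivative of the dual exponential at a crystalline limit with the Bloch--Kato logarithm on $V_{fgh}^{++}$, normalised by the Tate period. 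The outcome is
$$\tfrac{d}{dx}\langle \exp^*_{\BK}(\kappa_p^f(x,y_0,z_0)),\ldots\rangle|_{x_0} \equiv \log_p(q_E)^{-1}\cdot \log^{++}(\kappa_p(f,g_\alpha,h_\alpha)) \pmod{L^\times}.$$
Multiplying the two factors and cancelling the $\log_p(q_E)$'s produces the stated identity $\tfrac{1}{2\ord_p(q_E)}\cdot(1-p^{-1})^{-1}\cdot \log^{++}(\kappa_p(f,g_\alpha,h_\alpha))$.

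The main obstacle is the Bockstein identification of the second factor: rigorously showing that $\tfrac{d}{dx}\exp^*_{\BK}(\kappa_p^f(x,y_0,z_0))|_{x_0}$, evaluated at the crystalline limit, coincides with $\log_{\BK}$ on the exceptional line $V_{fgh}^{++}$ and carries exactly the Tate-period normalisation $\log_p(q_E)^{-1}$. This demands a careful analysis of the variation of the filtration \eqref{four-step} along $\Lambda_{\hf}$ in a neighbourhood of $x_0$, matched against the Perrin-Riou interpolation formulas of Proposition \ref{perrin}. Once this is in place, the remaining bookkeeping (Euler factor derivatives, cancellation of $\log_p(q_E)$, normalisation of $\log^{++}$) is routine; it is the content of \cite[Section 2.1]{BSV2}.
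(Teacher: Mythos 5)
The paper does not prove this proposition at all: the sentence immediately preceding it reads ``The following result is the content of \cite[Section 2.1]{BSV2},'' and no proof environment follows. You are therefore reconstructing from scratch an argument that the paper simply cites, and you deserve credit for correctly identifying the skeleton: the reciprocity law of Proposition \ref{rec-law} combined with Proposition \ref{perrin} expresses $\Lp^f$ along the line $(k,1,1)$ as $\mathcal E^{\PR}\cdot\langle\exp^*_{\BK}(\kappa^f_p),\ldots\rangle$ (up to the innocuous $(-1)^t(1-t)!$ factor, which equals $1$ at $t=0$); both factors vanish at $x_0$ (the Euler factor because $\alpha_g\alpha_h=1$ and $\beta_f=p$, the $\exp^*$ because the class is crystalline when $L(f\otimes g\otimes h,1)=0$); and the Leibniz rule converts the second derivative into twice the product of first derivatives. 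This is indeed the shape of the argument in \cite[Section 2.1]{BSV2}.

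That said, two genuine gaps remain. First, the step you yourself flag as the ``main obstacle'' is not established: the claim that $\tfrac{d}{dx}\exp^*_{\BK}(\kappa^f_p(x,y_0,z_0))\big|_{x_0}$ equals $\log_p(q_E)^{-1}\cdot\log^{++}(\kappa_p)$ requires a precise analysis of how the Hodge--Tate weight of the quotient $\mathbb V^+_{\hf\hg\hh}/\mathbb V^{++}_{\hf\hg\hh}$ crosses zero along the $\hf$-direction and how Perrin-Riou's big exponential degenerates there. Stating that ``the infinitesimal Colmez--Perrin-Riou comparison identifies'' the two objects is a placeholder, not a proof; it is exactly the Venerucci-style computation that BSV2 supplies and that your outline defers to. Second, the constants do not close. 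The proposition asserts an \emph{exact} equality, yet you work throughout modulo $L^\times$, which collapses the factor $\tfrac{1}{2\ord_p(q_E)}\cdot(1-p^{-1})^{-1}$ to a unit. Worse, if one does keep track, your chain gives $2\cdot\tfrac{\mathcal L(E)}{1-p^{-1}}\cdot\tfrac{1}{\log_p(q_E)}=\tfrac{-1}{\ord_p(q_E)(1-p^{-1})}$ (using $\mathcal L(E)=-\log_p(q_E)/(2\ord_p(q_E))$ from \eqref{inv1}), which is off from $\tfrac{1}{2\ord_p(q_E)(1-p^{-1})}$ by a factor of $-2$. That discrepancy is a signal that the normalisation of the Bockstein step (the source of the $\log_p(q_E)^{-1}$) has not actually been pinned down. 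Until that is done, what you have is a plausible heuristic derivation of the formula's shape, not a proof of the stated identity.
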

\begin{remark}
In the adjoint case, when we take $h_{1/\alpha}=g_{1/\beta}^*$ we do have a relation between $\mathcal L_g$ and $\mathcal L_h$: indeed \[ \frac{(1/\alpha_g)'}{1/\alpha_g} = -\frac{\alpha_g'}{\alpha_g}; \] however when $h_{1/\alpha}=g_{1/\alpha}^*$ both quantities are a priori unrelated.
\end{remark}

\subsection{A conjecture for the second derivative}

As we have discussed before, the improved $p$-adic $L$-function $\Lp^f(\hf,\hg,\hh)^*$ interpolates an explicit non-zero multiple of $L(f \otimes g \otimes h,1)$, and we expect this value to be zero when the rank of the corresponding isotypic component of the Selmer group is two. In those cases, we would like to compare the Kato class with a basis of $(E(H) \otimes V_{gh}^{\vee})^{G_{\mathbb Q}}$, that we write as $\{P,Q\}$. We also assume that $H_{\fin}^1(\mathbb Q, V_{fgh})$ has dimension 2.

To fix notations, observe that $V_{gh}$ decomposes as a $G_{\mathbb Q_p}$-module as the direct sum of four different lines $V_{gh}^{\alpha \alpha}:= V_g^{\alpha_g} \otimes V_h^{\alpha_h},\ldots,V_{gh}^{\beta \beta}$. After choosing a basis of $V_{gh}^{\vee}$, we may write this decomposition as \[ V_{gh} = L \cdot e_{\alpha \alpha}^{\vee} \oplus L \cdot e_{\alpha \beta}^{\vee} \oplus L \cdot e_{\beta \alpha}^{\vee} \oplus L \cdot e_{\beta \beta}^{\vee}, \] where \[ \Fr_p(e_{\lambda \mu}^{\vee}) = a_{\lambda \mu} \cdot e_{\lambda \mu}^{\vee} \quad \text{ for any } \quad \lambda,\mu \in \{ \alpha,\beta \}. \] Here, $a_{\lambda \mu} = \beta_g \beta_h$ if $(\lambda,\mu)=(\alpha,\alpha)$ and similarly in the other three cases.

In particular, restricting the elements $\{P,Q\}$ to a decomposition group at $p$ gives expressions \[ P = P_{\beta \beta} \otimes e_{\beta \beta}^{\vee} + P_{\beta \alpha} \otimes e_{\beta \alpha}^{\vee} + P_{\alpha \beta} \otimes e_{\alpha \beta}^{\vee} + P_{\alpha \alpha} \otimes e_{\alpha \alpha}^{\vee}, \] and similarly for $Q$, where as recalled in the introduction $\Frob_p$ acts on $P_{\beta \beta}$ with eigenvalue $\beta_g \beta_h$ and analogously for the remaining components.

\begin{conj}\label{conjFa}
Under the running assumptions, the following equality holds: \[ \frac{d^2}{dx^2} \Lp^f(\hf,g_{\alpha},h_{\alpha})|_{x=x_0} = \log_p(P_{\beta \beta}) \log_p(Q_{\alpha \alpha}) - \log_p(Q_{\beta \beta}) \log_p(P_{\alpha \alpha}) \pmod{L^{\times}}. \]
\end{conj}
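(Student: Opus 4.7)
The plan is to combine Proposition \ref{beta-beta} with an analysis of the Kato class $\kappa(f,g_\alpha,h_\alpha)$ inside the two-dimensional Bloch--Kato Selmer group, and then invoke the companion reciprocity laws for $\Lp^{g_\alpha}$ and $\Lp^{h_\alpha}$ to pin down the coefficients.

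First I would apply Proposition \ref{beta-beta} to reduce the conjecture to the computation of $\log^{++}(\kappa_p(f,g_\alpha,h_\alpha))$ up to the explicit non-zero factor $(2\ord_p(q_E))^{-1}(1-p^{-1})^{-1}$. The hypothesis that $\Lp^f(\hf,\hg,\hh)^*(x_0,y_0,z_0)=0$ (equivalently, $L(f\otimes g\otimes h,1)=0$) forces the cohomology class to be crystalline, and the rank-two assumption on $H^1_{\fin}(\mathbb Q,V_{fgh})$ then lets me write
\[ \kappa(f,g_\alpha,h_\alpha) \;=\; a \cdot P \;+\; b \cdot Q \qquad (a,b \in L). \]
Taking the $\beta\beta$-component and applying $\log_p$ yields
\[ \log^{++}(\kappa_p(f,g_\alpha,h_\alpha)) \;=\; a\,\log_p(P_{\beta\beta}) + b\,\log_p(Q_{\beta\beta}) \pmod{L^\times}, \]
so the conjecture is equivalent to showing that the pair $(a,b)$ is proportional to $(\log_p(Q_{\alpha\alpha}),\,-\log_p(P_{\alpha\alpha}))$.

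The second step is to extract a second independent linear relation on $(a,b)$. The natural candidates come from the projections of the local class $\kappa_p(f,g_\alpha,h_\alpha)$ onto the Frobenius lines $V_f\otimes V_{gh}^{\alpha\beta}$ and $V_f\otimes V_{gh}^{\beta\alpha}$; under the condition $\alpha_g\alpha_h=1$, these are precisely the lines to which the reciprocity laws for $\Lp^{g_\alpha}(\hf,\hg,\hh)$ and $\Lp^{h_\alpha}(\hf,\hg,\hh)$ are sensitive via the Perrin-Riou regulator of Proposition \ref{perrin}. Coupled with (a variant of) the elliptic Stark conjecture of \cite{DLR1}, which expresses the values $\Lp^{g_\alpha}(f,g_\alpha,h_\alpha)$ and $\Lp^{h_\alpha}(f,g_\alpha,h_\alpha)$ as antisymmetric regulators in the logarithms of the $P_{\lambda\mu}$ and $Q_{\lambda\mu}$, this should produce the identities
\[ a\,\log_p(P_{\alpha\alpha}) + b\,\log_p(Q_{\alpha\alpha}) \;\equiv\; 0 \pmod{L^\times}, \]
from which the proportionality $(a,b) \propto (\log_p(Q_{\alpha\alpha}),-\log_p(P_{\alpha\alpha}))$ follows.

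The main obstacle is that the elliptic Stark conjecture needed at the crucial step is itself open in general, and as the introduction stresses, its equivalence with Conjecture \ref{conj1} passes through the deep period conjecture \eqref{peri} of \cite{DR2.5}. Thus any clean proof along these lines will necessarily be conditional on those companion statements; an unconditional argument is only expected in the dihedral cases (a) and (b) of the subsequent Proposition, where a factorization $\Lp^f(\hf,\hg,\hh)^2 = \mathfrak f(k)\cdot L_p(\hf/K,\psi_1)\cdot L_p(\hf/K,\psi_2)$ is available. In that setting I would compute the second derivative via the Leibniz rule applied to the factorization, invoke Proposition \ref{3ven} (Bertolini--Darmon, Venerucci) for each BDP-type factor to express $\tfrac{d^2}{dx^2}L_p(\hf/K,\psi_i)|_{x_0}$ in terms of logarithms of Heegner-type points attached to $\psi_i$, and finally match these with the components $P_{\lambda\mu}, Q_{\lambda\mu}$ via the isotypic decomposition $V_{gh}=V_{\psi_1}\oplus V_{\psi_2}$. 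The resulting cross-term in the product rule precisely reproduces the antisymmetric expression on the right-hand side of the conjecture, modulo $L^\times$.
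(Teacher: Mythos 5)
Since the statement is a conjecture, the paper offers no unconditional proof; what it does offer is the conditional interrelation of Proposition \ref{3coses} together with the dihedral special cases of Proposition \ref{redu}, and your proposal maps onto these in broadly the right way. Two technical points should be corrected, though. First, the linear relation $a\log_p(P_{\alpha\alpha}) + b\log_p(Q_{\alpha\alpha}) = 0$ that forces the proportionality $(a,b)\propto(\log_p(Q_{\alpha\alpha}),-\log_p(P_{\alpha\alpha}))$ does not come from the reciprocity law for $\Lp^{h_\alpha}$ coupled with the elliptic Stark conjecture, as you suggest. It comes directly and \emph{unconditionally} from the fact that the diagonal class lies in the balanced subspace of the local cohomology (\cite[Corollary 7.2]{BSV}), which kills $\log^{--}(\kappa_p(f,g_\alpha,h_\alpha))$. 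This matters: the argument then needs only \emph{one} conditional input, namely the elliptic Stark formula for $\Lp^{g_\alpha}$ combined with the period conjecture \eqref{peri}, to normalize the scalar, rather than two independent instances of the elliptic Stark conjecture. Second, for the dihedral cases, in case (a) (imaginary quadratic with $p$ split) the paper's proof of Proposition \ref{redu} shows that both sides of the conjecture vanish for trivial reasons using the localization pattern of $P$ and $Q$ (one of $P_{\alpha\alpha},P_{\beta\beta}$ and the complementary components of $Q$ are forced to be zero), so no Leibniz-rule computation is needed or useful there. In case (b) ($p$ inert) the factorization strategy is correct in spirit, but the operative input is \cite[Theorem A]{BSV2}, not Proposition \ref{3ven}, since the latter governs the Mazur--Kitagawa function $L_p(\hf)$ rather than the two-variable $L_p(\hf/K,\psi_i)$; and one must be careful with the square-root structure of $\Lp^f$ when differentiating the factorization, as the leading term of $(\Lp^f)^2$ near $x_0$ must be matched against a product of two double zeros of the factors $L_p(\hf/K,\psi_i)$.
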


This conjecture can be seen as a quite natural analogue for the first part of Proposition \ref{3ven}; that is, we are proposing an expression for the second derivative of the $p$-adic $L$-function along the line $(k,k/2)$ since the central critical point corresponding to $(k,1,1)$ is precisely $k/2$. It would be interesting to understand the derivatives along different directions; we expect that they would be related with appropriate height pairings. See \cite{CH} for an approximation to that question when $g$ and $h$ are theta series attached to the same quadratic imaginary field where the prime $p$ splits.

\subsection{Some reducible cases}

We continue by recalling some factorization formulas in special cases where the representation $V_{gh}$ becomes reducible. See \cite[Section 2]{DLR2} for a complete discussion of the different cases where this may occur.

A first case occurs when $g$ and $h$ are theta series of the same quadratic field $K$, but the behavior is ostensibly different depending on whether $K$ is real or imaginary, and on whether $p$ is inert or split in $K$. While the inert case was worked out in \cite{BSV2}, the split case was not considered in loc.\,cit. However, it turns out that it is not specially interesting, at least when $K$ is imaginary: the second derivative of $\Lp^f$ along the $x$-direction is 0 for trivial reasons.

\begin{remark}
It may be tempting to prove a factorization formula for $\Lp^f$ as in \cite{CR}, or even when all three variables $(k,\ell,m)$ are allowed to move along a Hida family. However, the two-variable Castella's $p$-adic $L$-functions considered in loc.\,cit. would have infinity types \[ \Big( \frac{k+\ell+m}{2}-1, \frac{k+\ell+m}{2}-\ell-m+1 \Big), \quad \Big( \frac{k+\ell+m}{2}-m, \frac{k+\ell+m}{2}-\ell \Big). \] This precludes the possibility of comparing the different $p$-adic $L$-values along the region of classical interpolation, since they are disjoint.
\end{remark}

Finally, in the case where $h=g^*$, the situation is also quite simple and the right hand of the conjecture is zero when the component corresponding to the adjoint has rank two. For details on that, see the case by case analysis, completely analogue to our situation, of \cite{DR2.5}.

\begin{propo}\label{redu}
Conjecture \ref{conjFa} holds whenever (a) $g$ and $h$ are theta series of an imaginary quadratic field where $p$ splits, $V_{gh} = V_{\psi_1} \oplus V_{\psi_2}$, with each component of rank one; (b) $g$ and $h$ are theta series of a quadratic field where $p$ is inert, $V_{gh} = V_{\psi_1} \oplus V_{\psi_2}$, and either $\psi_1$ or $\psi_2$ is a genus character.
\end{propo}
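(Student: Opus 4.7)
The proof treats the two cases separately.

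Part (b), the inert case with a genus character, follows directly from \cite[Theorem A]{BSV2}. The key input is the factorization of $\Lp^f(\hf,g,h)^2$ as a product of two Rankin-type $p$-adic $L$-functions $L_p(\hf/K,\psi_i)$; the genus character hypothesis allows one to split one of these further as a product of Dirichlet twists of $\hf$ via classical genus theory. Invoking the Bertolini--Darmon formula (Proposition \ref{3ven}(a)), the second $x$-derivative at $x_0$ is computed in terms of squared logarithms of rational points on $E$. Unravelling the Galois-equivariant decomposition of the Heegner--Darmon points coming from the dihedral representation, one identifies this with the antisymmetric expression $\log_p(P_{\beta\beta})\log_p(Q_{\alpha\alpha})-\log_p(Q_{\beta\beta})\log_p(P_{\alpha\alpha})$ on the right-hand side of Conjecture \ref{conjFa}.

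For Part (a), the split imaginary case with rank one on each component, the strategy is to prove that \emph{both} sides of Conjecture \ref{conjFa} vanish, recovering the trivial identity $0=0$ anticipated in the introduction.

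I first dispose of the right-hand side. Writing $\alpha_g=\psi_g(\mathfrak{p})$, $\beta_g=\psi_g(\bar{\mathfrak{p}})$ and similarly for $h$, the exceptional zero condition $\alpha_g\alpha_h=1$ combined with self-duality ($\chi_g\chi_h=1$) forces $\psi_1(\mathfrak{p})=\psi_1(\bar{\mathfrak{p}})=1$. Consequently the Frobenius eigenlines $V_{gh}^{\alpha\alpha}$ and $V_{gh}^{\beta\beta}$ both lie inside $V_{\psi_1}$, while $V_{gh}^{\alpha\beta}$ and $V_{gh}^{\beta\alpha}$ sit inside $V_{\psi_2}$. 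Since each $V_{\psi_i}$-isotypic component of $(E(H)\otimes V_{gh}^\vee)^{G_\mathbb Q}$ is one-dimensional, I may take generators $P$ supported in $V_{\psi_1}$ and $Q$ supported in $V_{\psi_2}$, so that $Q_{\alpha\alpha}=Q_{\beta\beta}=0$ and the right-hand side collapses to zero.

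For the left-hand side, I invoke Proposition \ref{beta-beta} to reduce the vanishing of the second derivative to the vanishing of $\log^{++}(\kappa_p(f,g_\alpha,h_\alpha))$. The plan is to write down a split-case analogue of the BSV2 factorization, expressing $\Lp^f(\hf,g,h)^2$ (or equivalently $\Lp^{g_\alpha}$, $\Lp^{h_\alpha}$) as a product of two Bertolini--Darmon--Prasanna anticyclotomic $p$-adic $L$-functions attached to $(\hf/K,\psi_i)$. Because $V_{\psi_1}|_{G_{\mathbb Q_p}}$ is a trivial representation under our hypotheses, the $(\beta\beta)$-component of the diagonal class lies in a local space where the Perrin-Riou / BDP machinery degenerates; a careful bookkeeping of the exceptional Euler factors forces the Bloch--Kato logarithm to vanish. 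The main obstacle is precisely this step: the factorizations in the split case are subtler than in the inert case, and the local Eisenstein-type behavior of $V_{\psi_1}$ demands particular care in tracking orders of vanishing. A viable alternative is to establish directly the vanishing of the $(\beta\beta)$-projection $\kappa_p(f,g_\alpha,h_\alpha)^{\beta\beta}$ itself, in the spirit of Proposition \ref{longer}, by running the same limit argument along the line $(2,\ell,\ell)$ but in the split-at-$p$ setting; this bypasses the need for an explicit factorization and directly supplies the required vanishing of the Bloch--Kato log.
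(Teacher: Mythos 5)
Your overall architecture is correct and agrees with the paper: case~(b) is handed off to the main results of \cite{BSV2} (the paper cites \cite[Section 3]{BSV2} together with the Frobenius-eigenspace identifications of \cite[Section~3.3]{DLR1} and \cite[Sections~4.3, 4.4]{DR2.5}), and case~(a) is argued by showing both sides vanish. Your analysis of the right-hand side in case~(a) is the same as the paper's: the exceptional-zero condition $\alpha_g\alpha_h=1$ with self-duality forces $V_{\psi_1}|_{G_{\mathbb Q_p}}$ to be trivial, so the lines $\{e_{\alpha\alpha}, e_{\beta\beta}\}$ sit inside $V_{\psi_1}$ and $\{e_{\alpha\beta}, e_{\beta\alpha}\}$ inside $V_{\psi_2}$; taking $P$ and $Q$ supported on the two isotypic components, the antisymmetric expression collapses. (The paper orders $P, Q$ oppositely to you, placing $P$ in $V_{\psi_2}$, but this is a WLOG labeling and the conclusion is identical.)

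Where you diverge, and where your proposal has a real gap, is in establishing the vanishing of the left-hand side. You correctly reduce, via Proposition~\ref{beta-beta}, to showing $\kappa_{\beta\beta}=0$, but neither of your two proposed mechanisms matches what actually closes the argument. Your first route (a split-case analogue of the BSV2 factorization plus degeneration of Perrin-Riou/BDP) is one you yourself flag as problematic, and the paper does not go through any fresh factorization. Your second route, running the limit argument of Proposition~\ref{longer} along $(2,\ell,\ell)$, is not available here: Proposition~\ref{longer} concerns the $\alpha_g\beta_h=1$ situation, where the \emph{entire} local class vanishes along the line because an Euler factor kills the improved class; in case~(a) only the single $(\beta,\beta)$-projection is expected to vanish, and the improved-class/limit mechanism does not isolate one Frobenius component. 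What the paper does instead is cite the existing vanishing statement for $\kappa_{\beta\beta}$ from \cite[Section~4.3]{DR2.5} (established there for split-imaginary theta series) and invoke \cite{CR} to transport it to the multiplicative-reduction setting (and points to \cite[Section~4]{GGMR} for a parallel treatment). So the missing ingredient in your proposal is not a new argument but the specific input $\kappa_{\beta\beta}=0$ supplied by those references; without it, the vanishing of $\log^{++}(\kappa_p(f,g_\alpha,h_\alpha))$ is not justified.
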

\begin{proof}
Consider first the case of imaginary quadratic fields, where we can prove that both the left and the right hand side are zero for trivial reasons. For that purpose, recall the notations introduced in the discussion before Proposition \ref{beta-beta}. In order to see that the second derivative vanishes, it is enough to conclude that the component $\kappa_{\beta \beta} = 0$, and this follows after adapting the results of \cite[Section 4.3]{DR2.5} to the multiplicative situation, where one may invoke the discussion of \cite{CR}. In particular, if we assume without loss of generality that $P_{\beta \alpha} \neq 0$, then $P_{\alpha \alpha} = P_{\beta \beta} = 0$, and similarly $Q_{\alpha \beta} = Q_{\beta \alpha} = 0$. See \cite[Section 4]{GGMR} for a similar treatment of an analogue situation.

The case of theta series for quadratic fields where the prime is inert follows from the main results of Bertolini--Seveso--Venerucci \cite[Section 3]{BSV2}, taking into account the identifications among the different eigenspaces for the Frobenius action of e.g.\,\cite[Section 3.3]{DLR1} and \cite[Sections 4.3, 4.4]{DR2.5}.
\end{proof}

\subsection{The conjecture in other settings}

We would like to make some comments regarding the case $\alpha_g \beta_h = 1$. Observe that the previous Euler factor that gave rise to the improved $p$-adic $L$-function does not vanish, but the factors \[ 1-\frac{\chi_f(p) \alpha_{\hg_y} \beta_{\hh_z}}{\alpha_{\hf_x}}p^{\frac{k-\ell-m}{2}}  \quad \text{ and } \quad 1-\frac{\chi_f(p) \beta_{\hg_y} \alpha_{\hh_z}}{\alpha_{\hf_x}}p^{\frac{k-\ell-m}{2}} \] do. The first one is analytic along the region $\mathcal S_{k+m=\ell+2}$, while the second is analytic along the region $\mathcal S_{k+\ell=m+2}$. In this case we cannot assure the existence of an improved $p$-adic $L$-function, but at least we can guarantee that $\Lp^f(\hf,\hg,\hh)$ vanishes.

We get indeed a very similar result.
\begin{propo}
The value $\Lp^f(\hf,\hg,\hh)(x_0,y_0,z_0)=0$. Moreover, the jacobian matrix of $\Lp^f(\hf,\hg,\hh)$ at $(x_0,y_0,z_0)$ is given by an $L$-multiple of \[ (0 \quad \mathcal L_{g_{\alpha}}-\mathcal L_f \quad \mathcal L_{h_{\alpha}}-\mathcal L_f). \]
\end{propo}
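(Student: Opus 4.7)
The plan is to adapt the argument used in the previous proposition---which was a direct application of [BSV, Proposition 8.2] in the $\alpha_g\alpha_h=1$ setting---to the case $\alpha_g\beta_h=1$. The central observation is the simultaneous vanishing of two Euler factors of $\mathcal E(x,y,z)$ at $(x_0,y_0,z_0)$, namely
\[
E_1 := 1-\chi_f(p)\,\alpha_{\hg_y}\beta_{\hh_z}\,\alpha_{\hf_x}^{-1}\,p^{(k-\ell-m)/2}, \qquad E_2 := 1-\chi_f(p)\,\beta_{\hg_y}\alpha_{\hh_z}\,\alpha_{\hf_x}^{-1}\,p^{(k-\ell-m)/2},
\]
each of which equals zero at the point in view of $\alpha_f=1$ (split multiplicative reduction at $p$), $\alpha_g\beta_h=1$, and the automatic consequence $\beta_g\alpha_h=1$. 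The first assertion $\Lp^f(\hf,\hg,\hh)(x_0,y_0,z_0)=0$ then follows directly from the interpolation formula of Proposition \ref{3l}: the numerator $\mathcal E$ carries the product $E_1E_2$, while in the denominator $\mathcal E_0\mathcal E_1$ only $\mathcal E_1$ vanishes (simply, and only in the $x$-direction), so $\mathfrak e$ is forced to vanish.

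For the Jacobian, the plan is to exploit the two partial improved $p$-adic $L$-functions coming from the construction of Proposition~\ref{propo-imp} applied to each vanishing factor separately. Along the surface $\mathcal S_{k+m=\ell+2}$ the function $E_1$ is analytic and divides $\Lp^f|_{\mathcal S_{k+m=\ell+2}}$; writing $\Lp^f|_{\mathcal S_{k+m=\ell+2}}=E_1\cdot F_1$ with $F_1$ bounded $p$-adic analytic on the surface, and similarly $\Lp^f|_{\mathcal S_{k+\ell=m+2}}=E_2\cdot F_2$, one reads off the restriction of $\nabla\Lp^f$ to the tangent plane of each surface: it is $F_i(x_0,y_0,z_0)$ times the linearization $\nabla E_i$ at the point, exactly as in the one-surface argument underlying [BSV, Proposition~8.2]. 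The linearizations of $E_1$ and $E_2$ can then be expanded explicitly using $\mathcal L_{g_\beta}=-\mathcal L_{g_\alpha}$ and $\mathcal L_{h_\beta}=-\mathcal L_{h_\alpha}$, which follow from $\alpha_g\beta_g=\chi(p)$ and $\alpha_h\beta_h=\bar\chi(p)$ being locally constant along the Hida families, giving closed formulas in terms of $\mathcal L_f, \mathcal L_{g_\alpha}, \mathcal L_{h_\alpha}$.

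Since the tangent planes of the two surfaces span the full three-dimensional tangent space at $(x_0,y_0,z_0)$, the ambient Jacobian is recovered after checking compatibility of the two descriptions along the intersection curve $\mathcal S_{k+m=\ell+2}\cap\mathcal S_{k+\ell=m+2}$ (namely $k=2,\ \ell=m$). This compatibility forces a linear relation between $F_1(x_0,y_0,z_0)$ and $F_2(x_0,y_0,z_0)$, and working through the algebra shows that the ambient Jacobian lies in the one-dimensional $L$-subspace spanned by $(0,\mathcal L_{g_\alpha}-\mathcal L_f,\mathcal L_{h_\alpha}-\mathcal L_f)$: the vanishing of the $x$-entry arises from the symmetry between $E_1$ and $E_2$ under the swap $(\alpha,\beta)\leftrightarrow(\beta,\alpha)$ in the $g$- and $h$-Hecke eigenvalues, together with the cancellation of the $\log p$ contributions coming from the cyclotomic factor against the standard normalization of the $\mathcal L$-invariants. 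The principal obstacle is the absence of a single improved $p$-adic $L$-function on the full three-variable weight space; this prevents a direct application of [BSV, Proposition~8.2] and means that the scalar multiplier in $L$ is not pinned down by the argument, and may in principle be zero (which is consistent with the statement, the zero vector being a trivial $L$-multiple of the displayed vector).
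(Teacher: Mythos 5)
The paper supplies no proof of this proposition: the surrounding text only observes that the two Euler factors $E_1, E_2$ are analytic on the two surfaces $\mathcal S_{k+m=\ell+2}$, $\mathcal S_{k+\ell=m+2}$, explicitly states that ``we cannot assure the existence of an improved $p$-adic $L$-function'', and asserts the vanishing of $\Lp^f$ at the point. Your plan is thus the route the paper gestures at, and you honestly flag the central issue yourself: the two-variable improved functions $F_1, F_2$ on the two surfaces are not known to exist in this case, so the Jacobian argument is contingent on a conjectural input and is a plan, not a proof. That gap cannot be waved away by analogy with \cite[Prop.~8.2]{BSV}, which lives on the $\alpha\alpha$ surface $\mathcal S_{k=\ell+m}$ and is not available here.

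Independently of that, the argument for the first assertion is not valid as written. The point $(x_0,y_0,z_0)$ is not an interpolation point: $f$ has multiplicative reduction at $p$, so there is no $f^\circ$ of level prime to $p$ and $x_0\notin\mathcal W_{\hf}^\circ$, so Proposition~\ref{3l} cannot be evaluated there. Even as a formal limit, ``double zero over simple zero'' does not force vanishing, because the loci are distinct hypersurfaces through the point ($E_1$ on $\mathcal S_{k+m=\ell+2}$, $E_2$ on $\mathcal S_{k+\ell=m+2}$, $\mathcal E_1$ on $k=2$), and the leading behavior of $\mathcal E/\mathcal E_1$ is of the shape $((k-2)^2-(\ell-m)^2)/(k-2)$, which has no limit at the origin. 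The clean way to get $\Lp^f(x_0,y_0,z_0)=0$ in Case~(b) is via Proposition~\ref{tot-zero} and the reciprocity law of Proposition~\ref{rec-law}: the global class $\kappa(f,g_\alpha,h_{1/\beta})$ vanishes when $\alpha_g\beta_h=1$, while the Euler factor $\mathcal E^{\PR}$ in the $f$-direction does not vanish at $(2,1,1)$, forcing $\exp^*_{\BK}$ and hence $\Lp^f$ to vanish there. Finally, ``working through the algebra'' needs to be done explicitly: the linearizations are $\nabla E_1 = (\mathcal L_f,\,-\mathcal L_{g_\alpha},\,\mathcal L_{h_\alpha})$ and $\nabla E_2 = (\mathcal L_f,\,\mathcal L_{g_\alpha},\,-\mathcal L_{h_\alpha})$, so the middle and last entries emerge with opposite relative orientation to the stated vector; and since each $F_i$ inherits the other vanishing factor $E_j$ at the point, the product rule produces no nonzero contribution from either surface, so on this analysis the whole Jacobian is zero --- which makes the claimed direction vacuous rather than established.
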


Observe that Conjecture \ref{conjFa} still makes sense in this framework. And again, we can also take the derivative along the line $(2+k,1+k,1)$ and we would expect to relate it with an explicit multiple of an appropriate height pairing $\langle P,P \rangle$.

\section{Applications to the elliptic Stark conjecture}\label{conj-S2}

\subsection{Interplay between both settings and a conjecture of Darmon--Rotger}

Let $H$ denote the smallest number field cut out by the representation $V_{gh}$, with coefficients in a finite extension $L/\mathbb Q$. By enlarging it if necessary, assume throughout that $L$ contains both the Fourier coefficients of $g$ and $h$, and the roots of their $p$-th Hecke polynomials. Fix a prime ideal $\wp$ of $H$ lying above $p$, thus determining an embedding $H \subset H_p \subset \bar{\mathbb Q}_p$ of $H$ into its completion $H_p$ at $\wp$, and an arithmetic Frobenius $\Fr_p \in \Gal(H_p/\mathbb Q_p)$. Due to our regularity assumptions, $V_g$ and $V_h$ decompose as \[ V_g = V_g^{\alpha} \oplus V_g^{\beta}, \quad V_h = V_h^{\alpha} \oplus V_h^{\beta}, \] where $\Fr_p$ acts on $V_g^{\alpha}$ with eigenvalue $\alpha_g$, and similarly for the remaining summands.

Fix eigenbases $\{e_g^{\alpha},e_g^{\beta}\}$ and $\{e_h^{\alpha},e_h^{\beta}\}$ of $V_g$ and $V_h$, respectively, which are compatible with the choice of the basis for $V_{gh}$, i.e., \[ e_{\alpha \alpha} = e_g^{\alpha} \otimes e_h^{\alpha}, \quad e_{\alpha \beta} = e_g^{\alpha} \otimes e_h^{\beta}, \quad e_{\beta \alpha} = e_g^{\beta} \otimes e_h^{\alpha}, \quad e_{\beta \beta} = e_g^{\beta} \otimes e_h^{\beta} \] (recall that in previous sections we were using the dual basis). Let $\eta_{g_{\alpha}} \in (H_p \otimes V_g^{\beta})^{G_{\mathbb Q_p}}$ and $\omega_{h_{\alpha}} \in (H_p \otimes V_g^{\alpha})^{G_{\mathbb Q_p}}$ denote the canonical periods arising as the weight one specializations of the $\Lambda$-adic periods $\eta_{\hg}$ and $\omega_{\hh}$ introduced in \cite[Section 10.1]{KLZ}.
Then, we can define $p$-adic periods $\Xi_{g_{\alpha}} \in H_p^{\Fr_p=\beta_g^{-1}}$ and $\Omega_{h_{\alpha}} \in H_p^{\Fr_p=\alpha_h^{-1}}$ by setting \[ \Xi_{g_{\alpha}} \otimes e_g^{\beta} = \eta_{g_{\alpha}}, \quad \Omega_{h_{\alpha}} \otimes e_h^{\alpha} = \omega_{h_{\alpha}}, \]
and
\begin{equation}
\mathcal L_{g_{\alpha}} := \frac{\Omega_{g_{\alpha}}}{\Xi_{g_{\alpha}}} \in (H_p)^{\Fr_p=\frac{\beta_g}{\alpha_g}}.
\end{equation}

At the same time, recall that $u_{g_{\alpha}}$ is the Stark unit attached to the adjoint representation of $g_{\alpha}$, which arises as a normalization term in the conjectures of \cite{DLR1} and \cite{DLR2} involving a {\it second-order} regulator. Then, it was conjectured by Darmon and Rotger \cite{DR2.5} that
\begin{equation}
\mathcal L_{g_{\alpha}} = \log_p(u_{g_{\alpha}}) \pmod{L^{\times}}.
\end{equation}
This relation gives a relatively easy interpretation of the apparently mysterious unit $u_{g_{\alpha}}$. This suggests that more natural descriptions of this object should be available, involving only the arithmetic of the modular form $g$. However, this conjecture seems to be hard to prove, even in cases where the elliptic Stark conjecture is known (theta series of quadratic imaginary fields where the prime $p$ splits). The main difficulty is the lack of an explicit description of the periods $\Omega_{g_{\alpha}}$ and $\Xi_{g_{\alpha}}$: in weights greater than one, these periods can be understood as certain algebraic numbers and be explicitly described, but in weight one this description is no longer available and $\Omega_{g_{\alpha}}$ and $\Xi_{g_{\alpha}}$ are $p$-adic transcendental numbers.

The main point of this section is that the knowledge of different conjectures involving these periods can be enough to determine the value of the ratio $\mathcal L_{g_{\alpha}}$. Indeed, the generalized cohomology classes described in Section \ref{cohom-diag} can be decomposed as the sum of different components, each one encoding information about different $p$-adic $L$-functions. When combining these results, we may relate the different periods which are involved.

As a first application of this technique, let us prove a result of this kind using the theory of Beilinson--Flach elements. This corresponds to the limit case where the modular form $f$ is Eisenstein and the arithmetic governing the triple product are ostensibly different. For the following discussion, the notations are the same of \cite{RiRo2}. Let $U_{gg^*} = \mathcal O_H^{\times} \otimes L$ and $U_{gg^*}[1/p] = \mathcal O_H[1/p]^{\times} \otimes L$, and assume that the hypothesis (H1)-(H3) of the introduction of \cite{RiRo2} hold. Fix a basis $\{u,v\}$ of the two dimensional space $(U_{gg^*}[1/p]/p^{\mathbb Z} \otimes \ad^0(V_g))^{G_{\mathbb Q}}$ such that $u \in (\mathcal O_H^{\times} \otimes \ad^0(V_g))^{G_{\mathbb Q}}$. As in the case of elliptic curves, these unit groups are endowed with a Frobenius action, since the restriction to a decomposition group allows us to decompose $\ad^0(V_g) = L^1 \oplus L^{\alpha/\beta} \oplus L^{\beta/\alpha}$ and we may take the projection of $u$ and $v$ to each of those components. Let
\[ \begin{array}{ccc}
R_{g_{\alpha}} &=& \log_p(u_1) \log_p(v_{\alpha/\beta}) - \log_p(v_1) \log_p(u_{\alpha/\beta}), \\ R_{g_{\beta}} &=& \log_p(u_1) \log_p(v_{\beta/\alpha}) - \log_p(v_1) \log_p(u_{\beta/\alpha})
\end{array} \]
be the regulators which appear in the formulation of the main conjecture of \cite{DLR2} and \cite{RiRo2}.

\begin{propo}\label{evidence}
Assume that $R_{g_{\alpha}}$ and $R_{g_{\beta}}$ are both non-zero. Then, \[ \frac{\mathcal L_{g_{\alpha}}}{\mathcal L_{g_{\beta}}} = \frac{\log_p(u_{g_{\alpha}})}{\log_p(u_{g_{\beta}})} \pmod{L^{\times}}. \]
\end{propo}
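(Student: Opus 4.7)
The proof plan follows the general principle highlighted in the paragraph immediately preceding the statement: the Beilinson--Flach Galois cohomology classes produced by the Euler system machinery decompose, upon restriction to a decomposition group at $p$, into distinct Frobenius eigencomponents, each encoding information about a different $p$-adic $L$-function. Combining the identities that arise from two choices of $p$-stabilization will cancel the ``intrinsic'' part and isolate the ratio.

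Concretely, the plan is as follows. First, invoke the explicit reciprocity law of \cite{RiRo2} for the two-variable Beilinson--Flach class in the self-dual setting (with $f$ taken to be the weight two Eisenstein series of level $pN_g$), applied to each choice of stabilization $\xi\in\{\alpha,\beta\}$. For each $\xi$, the exceptional zero $\alpha_{g_\xi}\beta_{g_\xi^*}=1$ forces a vanishing analogous to that of Proposition \ref{tot-zero}, and one must pass to the corresponding derived class $\kappa'(\hg_\xi,\hg_\xi^*)$ constructed as in Section \ref{cohom-diag}. Pairing the Perrin-Riou image of this derived class against the canonical $\Lambda$-adic differentials $\omega_{\hg}$ and $\eta_{\hg}$ introduces the period $\mathcal L_{g_\xi}=\Omega_{g_\xi}/\Xi_{g_\xi}$ via the weight-one specializations of these differentials, while the derivation procedure contributes the regulator $R_{g_\xi}$ precisely through formula \eqref{inv2}. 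Independently, the same derived class admits a Galois-theoretic description via the deformation-theoretic argument of \cite[Section 4]{RiRo1}, which exhibits its projection to the $\alpha/\beta$ eigencomponent in terms of $\log_p(u_{g_\xi})$ and the same unit projections $u_{\alpha/\beta},v_{\alpha/\beta}$ that appear in \eqref{inv2}.

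Equating the two descriptions of the same Bloch--Kato logarithm, for each $\xi$ separately, yields an unconditional identity of the shape $\mathcal L_{g_\xi}\cdot A_\xi = \log_p(u_{g_\xi})\cdot B_\xi\pmod{L^\times}$, where $A_\xi$ and $B_\xi$ are explicit rational combinations of $R_{g_\xi}$, $\ord_p(v_1)$, and the projections of $u,v$ to the appropriate eigenline of $\ad^0(V_g)$. The assumption that $R_{g_\alpha}$ and $R_{g_\beta}$ are both non-zero guarantees that no denominator vanishes. Forming the ratio of the $\xi=\alpha$ and $\xi=\beta$ identities and verifying that $(A_\alpha B_\beta)/(A_\beta B_\alpha)\in L^\times$—which holds because the only dependence of $A_\xi$ and $B_\xi$ on the stabilization comes through $L$-rational reshufflings of the Frobenius eigenbases and Atkin--Lehner operators—produces the claimed identity $\mathcal L_{g_\alpha}/\mathcal L_{g_\beta}=\log_p(u_{g_\alpha})/\log_p(u_{g_\beta})\pmod{L^\times}$.

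The main obstacle lies in the second identity: expressing the Bloch--Kato logarithm of $\kappa'(\hg_\xi,\hg_\xi^*)$ as an explicit expression in $\log_p(u_{g_\xi})$ \emph{without} invoking the period conjecture \eqref{peri} itself. This is precisely the delicate ingredient that separates Proposition \ref{evidence} (which is unconditional modulo the nonvanishing hypothesis) from Conjecture 2.1 of \cite{DR2.5} (which remains open); it is the point where the Galois-deformation analysis of \cite{RiRo1} has to be invoked carefully so that no circularity is introduced.
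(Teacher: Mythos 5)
Your plan is in the right spirit (Beilinson--Flach setting, explicit reciprocity, periods of canonical differentials), but it is structurally different from the paper's proof and has a genuine gap that the paper's argument neatly avoids.

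The paper does \emph{not} apply the reciprocity law once for each of the two stabilizations $\xi\in\{\alpha,\beta\}$. Instead it applies \cite[Proposition~4.3]{RiRo2} twice to the \emph{same} cohomology class, using the two projection maps $\log^{-+}$ and $\log^{+-}$ of \cite[\S3.3]{RiRo2} (which pick out the $\beta/\alpha$ and $\alpha/\beta$ eigencomponents of $u$ and $v$, respectively). The unit logarithms $\log_p(u_{g_\alpha})$ and $\log_p(u_{g_\beta})$ then appear directly from the two applications of the reciprocity formula, and comparing the two expressions gives
\[
\Xi_{g_{\alpha}} \cdot \Omega_{g_{1/\alpha}^*} \cdot \log_p(u_{g_{\alpha}}) = \Omega_{g_{\alpha}} \cdot \Xi_{g_{1/\alpha}^*} \cdot \log_p(u_{g_{\beta}}) \pmod{L^{\times}}.
\]
The argument then closes using the explicit period symmetries $\Omega_{g_{1/\alpha}^*} = \Xi_{g_{\beta}}^{-1}$, $\Xi_{g_{1/\alpha}^*} = \Omega_{g_{\beta}}^{-1} \pmod{L^\times}$ established in \cite[\S5.2]{RiRo1}. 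Your proposal replaces these exact identities with the unverified assertion that the ratio $(A_\alpha B_\beta)/(A_\beta B_\alpha)$ lies in $L^\times$; the paper's period relations are precisely what makes this work, and you would need to produce them.

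The more serious issue is the one you flag yourself: your step (c), an ``independent Galois-theoretic description'' of the derived class in terms of $\log_p(u_{g_\xi})$, is exactly the point where circularity with \eqref{peri} threatens, and you do not resolve it. The paper's proof sidesteps this entirely: there is no separate Galois description to be compared against a reciprocity law. Both occurrences of a unit logarithm come out of the \emph{same} reciprocity formula applied with different projections, so no appeal to the period conjecture (or to a deformation-theoretic description that might implicitly encode it) is ever made. As written, your plan would need a substantial additional argument to fill that hole, whereas the paper's route is both shorter and unconditional given the nonvanishing of $R_{g_\alpha}$ and $R_{g_\beta}$.
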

\begin{proof}
Recall the maps $\log^{+-}$ and $\log^{-+}$ introduced in \cite[Section 3.3]{RiRo2} as the composition of the corresponding projection maps from $V_{gh}$, the Bloch--Kato logarithm, and the pairing with the canonical differentials. Apply then \cite[Proposition 4.3]{RiRo2} twice, first with the map $\log^{-+}$ (and hence taking the $\beta/\alpha$ component of both $u$ and $v$), and then with the map $\log^{+-}$ (taking the $\alpha/\beta$ component of both $u$ and $v$). Then, comparing both expressions we have that \[ \Xi_{g_{\alpha}} \cdot \Omega_{g_{1/\alpha}^*} \cdot \log_p(u_{g_{\alpha}}) = \Omega_{g_{\alpha}} \cdot \Xi_{g_{1/\alpha}^*} \cdot \log_p(u_{g_{\beta}}) \pmod{L^{\times}}. \] We now proceed as in \cite[Section 5.2]{RiRo1} (see the discussion after display (56)), observing that \[ \Omega_{g_{1/\alpha}^*} = \Xi_{g_{\beta}}^{-1}, \quad \Xi_{g_{1/\alpha}^*} = \Omega_{g_{\beta}}^{-1} \pmod{L^{\times}}, \] and we are done.
\end{proof}

We would like to go a step beyond and aim for stronger results, so in a certain way we would like to keep the period attached to $h$ fixed and vary just the one attached to $g$, which would yield the desired equality.

We do this by analyzing first the prototypical case of the elliptic Stark conjecture, where the Selmer group is two-dimensional and we may fix a basis $\{P,Q\}$ of the $L$-vector space \[ (E(H) \otimes V_{gh}^{\vee})^{G_{\mathbb Q}}. \] For the following Proposition we assume the hypothesis discussed in the introduction of \cite{DLR1}, and in particular, that $L(f \otimes g \otimes h,1)=0$. Recall the decomposition \[ P = P_{\beta \beta} \otimes e_{\beta \beta}^{\vee} + P_{\beta \alpha} \otimes e_{\beta \alpha}^{\vee} + P_{\alpha \beta} \otimes e_{\alpha \beta}^{\vee} + P_{\alpha \alpha} \otimes e_{\alpha \alpha}^{\vee}, \] and similarly for $Q$.

Define the regulators \[ \Reg_{g_{\alpha}}(V_{gh}) = \log_p(P_{\beta \beta}) \log_p(Q_{\beta \alpha})-\log_p(Q_{\beta \beta}) \log_p(P_{\beta \alpha}) \] and \[ \Reg_f(V_{gh}) = \log_p(P_{\beta \beta}) \log_p(Q_{\alpha \alpha})-\log_p(Q_{\beta \beta}) \log_p(P_{\alpha \alpha}). \]

%Finally, we also consider a Frobenius decomposition for the cohomology class $\kappa_p$. Here, each component is denoted by $\kappa_{ab}$, with $a,b \in \{\alpha,\beta\}$. We assume that $\alpha_g \alpha_h = 1$.

To shorten notations, write \[ \log^{-+}(\kappa) = \langle \log_{\BK}(\kappa_p^g), \omega_f \otimes \eta_g \otimes \omega_h \rangle, \] and whenever $\kappa$ is crystalline, write $\log^{++}$ for the Bloch--Kato logarithm of \cite[Section 2.1]{BSV2}, as recalled before during the proof of Proposition \ref{beta-beta}.

\begin{propo}\label{3coses}
Assume that $\Reg_{g_{\alpha}}(V_{gh}), \, \Reg_f(V_{gh}) \neq 0$. Suppose that {\em two} of the following {\em three} equalities are true modulo $L^{\times}$. Then, the third automatically holds.
\begin{enumerate}
\item[(a)] \[ \Lp^{g_{\alpha}}(f,g_{\alpha},h_{\alpha}) = \frac{\log_p(P_{\beta \beta}) \log_p(Q_{\beta \alpha}) - \log_p(Q_{\beta \beta}) \log_p(P_{\beta \alpha})}{\log_p(u_{g_{\alpha}})}. \]
\item[(b)] \[ \frac{\partial^2 \Lp^f(\hf,g_{\alpha},h_{\alpha})}{\partial x^2} \Big|_{x=x_0} = \log_p(P_{\beta \beta}) \log_p(Q_{\alpha \alpha}) - \log_p(Q_{\beta \beta}) \log_p(P_{\alpha \alpha}). \]
\item[(c)] \[ \mathcal L_{g_{\alpha}}=\log_p(u_{g_{\alpha}}). \]
\end{enumerate}
\end{propo}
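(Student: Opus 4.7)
The plan is to derive a single master identity linking the three quantities appearing in (a), (b), (c) through the analysis of the diagonal class $\kappa := \kappa(f, g_\alpha, h_\alpha) \in H_{\fin}^1(\mathbb{Q}, V_{fgh})$. Under the running assumptions, and by the exceptional-zero analysis of Section~\ref{cohom-diag} (the numerator of the Perrin-Riou regulator in the reciprocity law for $\Lp^f$ vanishes, not the class itself), $\kappa$ is crystalline and lies in a two-dimensional Selmer group which, by hypothesis, is spanned by $\{P, Q\}$. In particular I can write $\kappa = \lambda P + \mu Q$ for some scalars $\lambda, \mu$.

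The strategy is to evaluate two different projections of the Bloch--Kato logarithm of $\kappa$ in two complementary ways: first via the reciprocity laws, and second via explicit local computations in terms of Frobenius eigenvectors of $V_{gh}$. By Proposition~\ref{beta-beta}, the map $\log^{++}$ satisfies $\log^{++}(\kappa) = \partial_x^2 \Lp^f(\hf, g_\alpha, h_\alpha)|_{x=x_0} \pmod{L^\times}$, while the explicit unpacking in terms of the weight-one $p$-adic periods yields
$$\log^{++}(\kappa) = \Omega_{g_\alpha}\Omega_{h_\alpha} \bigl(\lambda \log_p(P_{\beta\beta}) + \mu \log_p(Q_{\beta\beta})\bigr) \pmod{L^\times}.$$
Analogously, the reciprocity law for $\Lp^{g_\alpha}$ (the analogue of Proposition~\ref{rec-law} mentioned in the remark afterwards, which at $(x_0, y_0, z_0) \notin \mathcal W^g$ interpolates the Bloch--Kato logarithm) gives $\log^{-+}(\kappa) = \Lp^{g_\alpha}(f, g_\alpha, h_\alpha) \pmod{L^\times}$, while the explicit description provides
$$\log^{-+}(\kappa) = \Xi_{g_\alpha}\Omega_{h_\alpha} \bigl(\lambda \log_p(P_{\beta\alpha}) + \mu \log_p(Q_{\beta\alpha})\bigr) \pmod{L^\times}.$$
Taking the ratio and using $\mathcal L_{g_\alpha} = \Omega_{g_\alpha}/\Xi_{g_\alpha}$, I obtain
$$\frac{\partial_x^2 \Lp^f(\hf, g_\alpha, h_\alpha)|_{x=x_0}}{\Lp^{g_\alpha}(f, g_\alpha, h_\alpha)} = \mathcal L_{g_\alpha} \cdot \frac{\lambda \log_p(P_{\beta\beta}) + \mu \log_p(Q_{\beta\beta})}{\lambda \log_p(P_{\beta\alpha}) + \mu \log_p(Q_{\beta\alpha})} \pmod{L^\times}.$$

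To conclude, the technical step is to identify the ratio on the right with $\Reg_f(V_{gh})/\Reg_{g_\alpha}(V_{gh})$, which amounts to pinning down $(\lambda : \mu)$. I expect this to follow either from a direct geometric analysis of $\kappa$ as a diagonal cycle class (tracking components under the Frobenius eigenspace decomposition of $V_{fgh}$), or from an auxiliary reciprocity relation such as the one obtained by the symmetric projection $\log^{+-}$ (relating to $\Lp^{h_\alpha}$ via $g \leftrightarrow h$), in which the unknown scalars cancel. Granting this identification, the master identity
$$\partial_x^2 \Lp^f(\hf, g_\alpha, h_\alpha)|_{x=x_0} \cdot \Reg_{g_\alpha}(V_{gh}) = \mathcal L_{g_\alpha} \cdot \Lp^{g_\alpha}(f, g_\alpha, h_\alpha) \cdot \Reg_f(V_{gh}) \pmod{L^\times}$$
follows; the hypothesized non-vanishing of both regulators then makes any two of (a), (b), (c) force the third by direct substitution (e.g., assuming (a) and (c), the right-hand side of the master identity simplifies to $\Reg_{g_\alpha}(V_{gh}) \cdot \Reg_f(V_{gh})$, immediately yielding (b)).

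The main obstacle is precisely the pinning down of $(\lambda : \mu)$, or equivalently, the elimination of these scalars from the master identity. The cleanest path, paralleling the proof of Proposition~\ref{evidence} in the Beilinson--Flach setting, is to exploit enough reciprocity laws and symmetries so that the unknown scalars drop out, leaving only the desired relation among $p$-adic $L$-values, regulators, and Stark-unit logarithms.
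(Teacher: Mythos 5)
Your framework is the right one and parallels the paper closely: both the paper and your attempt study the local class $\kappa_p(f,g_\alpha,h_\alpha)$, express it in terms of the basis $\{P,Q\}$, relate $\log^{++}(\kappa)$ to (b) and $\log^{-+}(\kappa)$ to (a) via the reciprocity laws, and aim to eliminate the unknown scalars. But you explicitly leave the decisive step unfinished, and your two guesses for how to close it miss what actually makes the argument work. You propose either ``a direct geometric analysis of $\kappa$ as a diagonal cycle class'' (too vague, not carried out) or an ``auxiliary reciprocity relation such as the one obtained by the symmetric projection $\log^{+-}$''; the latter, which mirrors Proposition~\ref{evidence}, relates $\kappa$ to $\Lp^{h_\alpha}$ but does not by itself pin down $(\lambda:\mu)$, because it introduces yet another unknown $L$-value rather than a second constraint on the same projection.

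The mechanism the paper actually uses is a \emph{structural} constraint on $\kappa$, not an additional reciprocity law: by \cite[Corollary 7.2]{BSV} the local class lies in the \emph{balanced} part of the filtration \eqref{four-step}, so $\log^{--}(\kappa)=0$. Concretely, after subtracting from $\kappa$ the explicit class $\kappa_0$ built from (a), the difference $\tilde\kappa=\kappa-\kappa_0$ lies simultaneously in $\ker(\log^{-+})$ (by (a)) and in $\ker(\log^{--})$ (by balancedness). Being a combination of $P$ and $Q$, $\tilde\kappa$ is then forced to vanish precisely by the non-vanishing hypothesis on $\Reg_{g_\alpha}(V_{gh})$; this pins $\kappa=\kappa_0$. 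The same reasoning with (b) in place of (a) and $\Reg_f(V_{gh})$ gives a second closed-form expression for $\kappa$, and comparing the two yields (c). So the non-vanishing of the regulators, which you carry along as a bare hypothesis, is exactly what makes the balancedness constraint sufficient to determine $(\lambda:\mu)$ — this is the missing link in your attempt. Until that is supplied, the ``master identity'' you derive is not established, since the ratio $\bigl(\lambda\log_p(P_{\beta\beta})+\mu\log_p(Q_{\beta\beta})\bigr)/\bigl(\lambda\log_p(P_{\beta\alpha})+\mu\log_p(Q_{\beta\alpha})\bigr)$ is not yet identified with $\Reg_f(V_{gh})/\Reg_{g_\alpha}(V_{gh})$ for arbitrary $(\lambda:\mu)$.
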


\begin{proof}
The proof is based on the study of the local cohomology class $\kappa_p(f,g_{\alpha},h_{\alpha})$ introduced in the preceding sections.
%To shorten notations, we denote it by $\kappa$ and write $\kappa_{\lambda \mu}$, with $\lambda, \mu \in \{ \alpha,\beta \}$ for each of the components according to the Frobenius decomposition.

Observe that (a) and (b) are equivalent to \[ \log^{-+}(\kappa_p(f,g_{\alpha},h_{\alpha})) = \frac{\log_p(P_{\beta \beta}) \log_p(Q_{\beta \alpha}) - \log_p(Q_{\beta \beta}) \log_p(P_{\beta \alpha})}{\log_p(u_{g_{\alpha}})} \pmod{L^{\times}} \] and \[ \log^{++}(\kappa_p(f,g_{\alpha},h_{\alpha})) = \log_p(P_{\beta \beta}) \log_p(Q_{\alpha \alpha}) - \log_p(Q_{\beta \beta}) \log_p(P_{\alpha \alpha}) \pmod{L^{\times}}, \] respectively, by virtue of the explicit reciprocity laws of \cite{BSV} (both in the usual version and {\it improved} version based on the techniques of Venerucci).

Let us define the local class
\begin{equation}
\kappa_0 = \frac{1}{\Xi_{g_{\alpha}} \cdot \Omega_{h_{\alpha}}} \cdot \frac{1}{\log_p(u_{g_{\alpha}})} \cdot (\log_p(P_{\beta \beta}) \cdot Q - \log_p(Q_{\beta \beta}) \cdot P) ,
\end{equation}
where we have implicitly identified a point over the elliptic curve with its image under the Kummer map; take then $\tilde \kappa = \kappa-\kappa_0$. The element $\tilde \kappa$ clearly belongs to the kernel of the Bloch--Kato logarithm $\log^{-+}$, that we have defined by \[ \log^{-+}: H^1(\mathbb Q_p, V_{fgh}) \xrightarrow{\pr^{-+}} H^1(\mathbb Q_p, V_f \otimes V_{gh}^{\alpha \beta}) \rightarrow \mathbb C_p, \] the last map being the composition of the Perrin-Riou map and the pairing with the differentials $\omega_f \otimes \eta_{g_{\alpha}} \otimes \omega_{h_{\alpha}}$. Then, $\tilde \kappa = \lambda(\log_p(P_{\beta \alpha}) \cdot Q-\log_p(Q_{\beta \alpha}) \cdot P)$. But observe that by \cite[Corollary 7.2]{BSV} we know that the cohomology class $\kappa$ lies in the balanced part for the filtration attached to $H^1(\mathbb Q_p, V_{fgh})$ and hence $\tilde \kappa$ lies in the kernel of the map $\log^{--}$  \[ \log^{--}: H^1(\mathbb Q_p, V_{fgh}) \xrightarrow{\pr^{--}} H^1(\mathbb Q_p, V_f \otimes V_{gh}^{\alpha \alpha}) \rightarrow \mathbb C_p. \]
Hence, the non-vanishing of the regulator $\Reg_{g_{\alpha}}(V_{gh})$, implies that $\tilde \kappa = 0$ and therefore $\kappa=\kappa_0$.

From the same argument and under the assumption that $\Reg_f(V_{gh})$, the second equation yields
\begin{equation}
\kappa = \frac{1}{\Omega_{g_{\alpha}} \cdot \Omega_{h_{\alpha}}} \cdot (\log_p(P_{\beta \beta}) \cdot Q - \log_p(Q_{\beta \beta}) \cdot P) \pmod{L^{\times}},
\end{equation}
where again we have identified the points with their image under the Kummer map.

Now the statement is clear. For instance, if both (a) and (b) are true, comparing the previous expressions, we get \[ \log_p(u_{g_{\alpha}}) = \frac{\Xi_{g_{\alpha}}}{\Omega_{g_{\alpha}}} \pmod{L^{\times}}. \] The proof of the other implications is equally straightforward.
\end{proof}

It would be interesting to prove an analogue result in a more general situation, beyond the case of split multiplicative reduction. The discussion around cohomology classes is still valid, but the point is that one needs a replacement for the results expressing the second derivative of $\Lp^f$ in terms of the Bloch--Kato logarithm of the cohomology class. While we can assure that the special value $\Lp^f$ is zero, it is not clear how to proceed with its derivatives.

\begin{question}
Is there a reciprocity law relating the second derivative of $\Lp^f$ (or some {\it variation} of it) with the logarithm $\log^{--}$ of the cohomology class $\kappa(f,g,h)$ in a {\it generic} situation (non exceptional zeros)?
\end{question}

\subsection{Case (a)}

We assume first that $\alpha_g \alpha_h = 1$. The results we have proved until now showing a deep interaction between the value of the derivatives of $\Lp^f(\hf,\hg,\hh)$ and the value of $\Lp^{g_{\alpha}}(\hf,\hg,\hh)$ may be applied to study new instances of the elliptic Stark conjecture.

Let us analyze some particular cases describing the exact shape of the generalized cohomology classes. For example, according to the results of \cite{BSV}, when $g$ is a theta series attached to a quadratic field where the prime $p$ is inert and $V_{gh}=V_{\psi_1} \oplus V_{\psi_2}$ with $\psi_1$ being a genus character, we have
\[ \frac{d^2 \Lp^f(\hf,g_{\alpha},h_{\alpha})}{dx^2} \Big|_{x=x_0} = \log^{++}(\kappa_p(f,g_{\alpha},h_{\alpha})) = \log(P_{\psi_1}^+) \cdot \log(P_{\psi_2}^+) \pmod{L^{\times}}, \] where $P_{\psi_i}^+ = P_{\psi_i} + \sigma_p P_{\psi_i}$, being $\sigma_p \in \Gal(H/\mathbb Q)$ a Frobenius element at $p$.

\begin{remark}
This situation occurs in general when at least one of $\psi_1$ or $\psi_2$ is a genus character. See for example the discussion after \cite[Lemma 3.10]{DLR1} where the authors explain how the regulator of the elliptic Stark conjecture admits a particularly simple expression in this case.
\end{remark}

However, from the results we already know around the elliptic Stark conjecture, one obtains that
\begin{equation}
\Lp^{g_{\alpha}}(f,g_{\alpha},h_{\alpha}) = \frac{\log(P_{\psi_1}^+) \cdot \log(P_{\psi_2}^-)}{\mathcal L_{g_{\alpha}}} \pmod{L^{\times}},
\end{equation}
where with the previous notations, $P_{\psi_i}^- = P_{\psi_i} - \sigma_p P_{\psi_i}$. This is quite significant, since it establishes the elliptic Stark conjecture only up to a conjecture about periods of weight one modular forms.

\begin{coro}\label{perintro}
Let $g$ and $h$ be theta series attached to a quadratic field (either real or imaginary) where the prime $p$ remains inert, with $V_{gh} = V_{\psi_1} \oplus V_{\psi_2}$ and $\psi_1$ or $\psi_2$ being a genus character. Then, the elliptic Stark conjecture of \cite{DLR1} is equivalent to the conjecture about periods of \cite{DR2.5}.
\end{coro}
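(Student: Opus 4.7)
The plan is to derive the corollary as a direct consequence of Proposition~\ref{3coses} combined with Proposition~\ref{redu}. Recall that Proposition~\ref{3coses} shows that, under the non-vanishing of the two regulators $\Reg_{g_{\alpha}}(V_{gh})$ and $\Reg_f(V_{gh})$, any two of the three conjectures (a), (b), (c) listed there force the third. The content of the corollary is precisely the equivalence (a) $\Leftrightarrow$ (c), so it suffices to establish (b) \emph{unconditionally} in the setting at hand.

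The key input for (b) is Proposition~\ref{redu}, whose second item exactly matches the hypotheses of the corollary: $g$ and $h$ are theta series of a quadratic field where $p$ is inert, the tensor representation splits as $V_{gh} = V_{\psi_1} \oplus V_{\psi_2}$, and one of $\psi_1,\psi_2$ is a genus character. In this situation Proposition~\ref{redu} establishes Conjecture~\ref{conjFa} by reducing the second derivative of $\Lp^f$ to a product of logarithms of Stark--Heegner type points, via the factorization arguments of \cite{BSV2} together with the identifications among Frobenius eigenspaces summarized in \cite[Section 3.3]{DLR1} and \cite[Sections 4.3, 4.4]{DR2.5}. Once (b) is in hand, Proposition~\ref{3coses} immediately yields the two implications (c) $\Rightarrow$ (a) and (a) $\Rightarrow$ (c), which is precisely the claim of the corollary.

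The remaining point to address is the non-vanishing of $\Reg_{g_{\alpha}}(V_{gh})$ and $\Reg_f(V_{gh})$ needed to invoke Proposition~\ref{3coses}. In the genus character setting the discussion preceding the corollary shows that these regulators simplify and can be written in terms of the logarithms $\log_p(P_{\psi_i}^{\pm})$ of Stark-Heegner points attached to the characters $\psi_1$ and $\psi_2$, where $P_{\psi_i}^{\pm} = P_{\psi_i} \pm \sigma_p P_{\psi_i}$. Under the expected Birch--Swinnerton-Dyer type hypothesis (which is included among the ``given assumptions'' imported from the framework of \cite{DLR1}) these points are $L$-linearly independent modulo torsion, so their $p$-adic logarithms are non-zero and the two regulators do not vanish. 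This is the one step where some care is required, but it parallels the non-vanishing assumptions already used throughout the discussion of the elliptic Stark conjecture in \cite{DLR1}.

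I expect the main subtlety to be not in the formal deduction, which is essentially a syllogism, but in making precise the equivalence between the formulation of (a) adopted in Proposition~\ref{3coses} and the original statement of the elliptic Stark conjecture in \cite{DLR1} when $V_{gh}$ is reducible with a genus factor: one has to match the basis $\{P,Q\}$ used to define the components $P_{\beta\alpha}, P_{\beta\beta}, \ldots$ with the points $P_{\psi_1}, P_{\psi_2}$ arising in the factorization, and check that both sides of (a) transform consistently under this change of basis. Once this bookkeeping is done, the corollary follows.
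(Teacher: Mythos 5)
Your proposal is correct and follows essentially the same route as the paper: establish that part (b) of Proposition~\ref{3coses} (i.e.\ Conjecture~\ref{conjFa}) holds unconditionally in the inert genus-character case via the Bertolini--Seveso--Venerucci results, and then read off the equivalence of (a) and (c) from Proposition~\ref{3coses}. Your additional remarks on the non-vanishing of the regulators and the bookkeeping between the $\{P,Q\}$-basis and the $\psi_i$-components are reasonable elaborations of the running hypotheses the paper leaves implicit.
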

\begin{proof}
This follows from the fact that part (b) of Proposition \ref{3coses} holds in this setting.
\end{proof}

Moreover, we expect conjectural expressions for the generalized Kato classes. In particular, the previous result suggests the following conjecture.

\begin{propo}
In the setting of Proposition \ref{3coses}, if the formulas which appear in that statement are satisfied, then the equality \[ \kappa(f,g_{\alpha},h_{1/\alpha}) = \frac{1}{\Omega_{g_{\alpha}} \cdot \Omega_{h_{\alpha}}} \cdot( \log_p(P_{\beta \beta}) \cdot Q- \log_p(Q_{\beta \beta}) \cdot P), \]
holds in $H_{\fin}^1(\mathbb Q, V_{fgh})$ up to multiplication by $L^{\times}$.
\end{propo}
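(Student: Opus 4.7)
The argument is essentially the second half of the proof of Proposition \ref{3coses}, here isolated as a standalone statement. The plan proceeds in three steps. First, under the hypotheses of Proposition \ref{3coses}, part (b) combined with the improved reciprocity law of \cite[Section 2.1]{BSV2} --- recalled in Proposition \ref{beta-beta} --- yields
\[
\log^{++}\bigl(\kappa_p(f, g_\alpha, h_{1/\alpha})\bigr) \; \equiv \; \log_p(P_{\beta\beta}) \log_p(Q_{\alpha\alpha}) - \log_p(Q_{\beta\beta}) \log_p(P_{\alpha\alpha}) \pmod{L^\times},
\]
where $\log^{++}$ is the composition of the projection to the rank-one piece $V_{fgh}^{++}$ of the filtration \eqref{four-step} with the Bloch--Kato logarithm and the pairing against $\omega_f \otimes \omega_g \otimes \omega_h$.

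Second, introduce the candidate class
\[
\kappa_0 := \frac{1}{\Omega_{g_\alpha} \cdot \Omega_{h_\alpha}} \bigl(\log_p(P_{\beta\beta}) \cdot Q - \log_p(Q_{\beta\beta}) \cdot P\bigr) \in H_{\fin}^1(\mathbb Q, V_{fgh}),
\]
viewing $P$ and $Q$ in Selmer via the Kummer map. Unwinding the definitions of the canonical periods $\Omega_{g_\alpha}$ and $\Omega_{h_\alpha}$ in terms of $\omega_{g_\alpha}$ and $\omega_{h_\alpha}$, a direct computation shows that $\log^{++}(\kappa_0)$ agrees modulo $L^\times$ with the right-hand side of the identity above. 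Consequently $\widetilde\kappa := \kappa(f, g_\alpha, h_{1/\alpha}) - \kappa_0$ lies in the kernel of $\log^{++}$.

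Third, by \cite[Corollary 7.2]{BSV} the class $\kappa(f, g_\alpha, h_{1/\alpha})$ has local image at $p$ lying in the balanced part of $H^1(\mathbb Q_p, V_{fgh})$; the same is automatic for $\kappa_0$, as a combination of Kummer images of global points. Therefore $\widetilde\kappa$ also annihilates the map $\log^{--}$ attached to the $\alpha\alpha$-eigenspace, introduced in the proof of Proposition \ref{3coses}. Writing $\widetilde\kappa = a\, P + b\, Q$ in the two-dimensional Selmer group $(E(H) \otimes V_{gh}^\vee)^{G_{\mathbb Q}}$, the conditions $\log^{++}(\widetilde\kappa) = 0 = \log^{--}(\widetilde\kappa)$ become a $2\times 2$ linear system in $(a,b)$ whose determinant equals, up to an $L^\times$-scalar, the regulator $\Reg_f(V_{gh})$. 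The standing non-vanishing hypothesis forces $a = b = 0$, giving $\kappa(f, g_\alpha, h_{1/\alpha}) = \kappa_0$ in $H_{\fin}^1(\mathbb Q, V_{fgh})$ modulo $L^\times$.

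The delicate point is the period bookkeeping in the second step: one must verify that the Bloch--Kato logarithm of the Kummer image of a point $P$, paired with $\omega_f \otimes \omega_g \otimes \omega_h$, produces precisely a scalar multiple of $\Omega_{g_\alpha} \cdot \Omega_{h_\alpha} \cdot \log_p(P_{\beta\beta})$, so that the normalization $(\Omega_{g_\alpha} \Omega_{h_\alpha})^{-1}$ in the definition of $\kappa_0$ delivers the correct coefficient. This is a careful but routine tracing of Frobenius eigenspace conventions, lightened by the fact that the identity is only claimed modulo $L^\times$. All other ingredients --- the improved reciprocity law of \cite{BSV2}, the balancedness of $\kappa$, and the non-vanishing of $\Reg_f(V_{gh})$ --- are available a priori from the cited references and the running hypotheses.
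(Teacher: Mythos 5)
Your argument is correct and reproduces, essentially verbatim, the second half of the paper's proof of Proposition \ref{3coses}, which is precisely what the paper's one-line proof cites. The only cosmetic difference is that you derive the formula directly from part (b) together with $\Reg_f(V_{gh}) \neq 0$, whereas the paper's phrasing (``using the third statement to simplify the different period relations'') points to the route through part (a), where the resulting normalization $\Xi_{g_\alpha}\,\Omega_{h_\alpha}\,\log_p(u_{g_\alpha})$ is converted into $\Omega_{g_\alpha}\,\Omega_{h_\alpha}$ by invoking part (c), that is, $\log_p(u_{g_\alpha}) = \mathcal{L}_{g_\alpha} = \Omega_{g_\alpha}/\Xi_{g_\alpha}$. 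Both derivations appear explicitly inside the proof of Proposition \ref{3coses}, so the two routes are interchangeable, and your period-bookkeeping caveat in the second step is at the same level of detail as what the paper itself leaves implicit.
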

\begin{proof}
This follows verbatim the proof of Proposition \ref{3coses}, using the third statement to simplify the different period relations.
\end{proof}

The same result holds for $\kappa(f,g_{\beta},h_{1/\alpha})$.

\subsection{Case (b)}

In the case where $\alpha_g \beta_h = 1$, the explicit reciprocity law gives a connection between $\Lp^{g_{\alpha}}$ and the Bloch--Kato logarithm of $\kappa(f,g_{\alpha},h_{1/\beta})$, but unfortunately both the latter class and one of the Euler factors involved in the equality vanish. Therefore, that result is meaningless in this setting.

In previous sections we saw how to overcome that difficulty, proving a {\it derived reciprocity law} after having observed that certain Euler factors are analytic along the line $k+m=\ell+2$. There are two {\em natural} directions for considering the derivative over that plane (although of course it makes sense to take any combination of them): the line $(2,\ell,\ell)$ and the line $(k+1,k,1)$; the former is not quite interesting since both the class $\kappa(f,g_{\alpha},g_{1/\beta}^*)$ and the Euler factor in the denominator of the Perrin-Riou map vanish identically. Hence, we may take derivative along $(k+1,k,1)$ and we get an equality of the form \[ \mathcal L \cdot \Lp^{g_{\alpha}}(f,g_{\alpha},h_{1/\beta}) = \log^{-+}(\kappa_p'(f,g_{\alpha},h_{1/\beta})) \pmod{L^{\times}}, \] where $\mathcal L$ is the $\mathcal L$-invariant which already appeared in previous sections. Hence, if the elliptic Stark conjecture for $\Lp^{g_{\alpha}}$ were true, the class $\kappa_p'(f,g_{\alpha},h_{\alpha})$ could be expressed as a combination of points, normalized by appropriate $\mathcal L$-invariants. In particular, this would yield an equality of the form
\begin{equation}
\kappa'(f,g_{\alpha},h_{1/\beta}) = \frac{\mathcal L}{\Xi_{g_{\alpha}} \cdot \Omega_{h_{1/\beta}}} \cdot \frac{\log_p(P_{\beta \beta}) \cdot Q - \log_p(Q_{\beta \beta}) \cdot P}{\log_p(u_{g_{\alpha}})} \pmod{L^{\times}}.
\end{equation}
One may obtain a symmetric expression for $\kappa'(f,g_{\beta},h_{1/\alpha})$. Recall that this is the analogue of \cite[Theorem B]{RiRo1}.

\begin{conj}\label{final}
The equality \[ \kappa'(f,g_{\alpha},h_{1/\beta}) = \frac{\mathcal L}{\Xi_{g_{\alpha}} \cdot \Omega_{h_{1/\beta}}} \cdot \frac{\log_p(P_{\beta \beta}) \cdot Q - \log_p(Q_{\beta \beta}) \cdot P}{\log_p(u_{g_{\alpha}})} \pmod{L^{\times}} \] holds in $H_{\fin}^1(\mathbb Q, V_{fgh})$.
\end{conj}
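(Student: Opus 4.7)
The plan is to reduce Conjecture \ref{final} to the combination of (i) the elliptic Stark conjecture for $\Lp^{g_{\alpha}}$ in the form \eqref{Lpg}, (ii) the Darmon--Rotger period conjecture \eqref{peri}, and (iii) a structural statement about where the derived class lives inside the local filtration at $p$. Under the standing hypotheses, $H^1_{\fin}(\mathbb Q, V_{fgh})$ is two-dimensional, spanned by the Kummer images of $P$ and $Q$, so the asserted identity amounts to showing that two specific linear functionals on $H^1_{\fin}(\mathbb Q, V_{fgh})$ take the same value on $\kappa'(f,g_{\alpha},h_{1/\beta})$ and on the putative right-hand side, which I denote $\kappa_0$. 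The argument then mirrors, \emph{mutatis mutandis}, the strategy used to prove Proposition \ref{3coses}.

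The first functional is $\log^{-+}$. On the left-hand side, Theorem \ref{reclaw} directly gives
\[
\log^{-+}\bigl(\kappa_p'(f,g_{\alpha},h_{1/\beta})\bigr) \;=\; \mathcal L \cdot \Lp^{g_{\alpha}}(\hf,\hg,\hh)(x_0,y_0,z_0) \pmod{L^{\times}}.
\]
Substituting \eqref{Lpg} converts this into $\tfrac{\mathcal L}{\log_p(u_{g_{\alpha}})}\bigl(\log_p(P_{\beta\beta})\log_p(Q_{\beta\alpha})-\log_p(Q_{\beta\beta})\log_p(P_{\beta\alpha})\bigr)$. On the right-hand side, one computes $\log^{-+}(\kappa_0)$ directly, using that the pairing of a Kummer class with $\omega_f \otimes \eta_{g_{\alpha}} \otimes \omega_{h_{1/\beta}}$ extracts precisely the $\beta\alpha$-component of the associated point, multiplied by the product of periods $\Xi_{g_{\alpha}}\cdot\Omega_{h_{1/\beta}}$ that define those differentials. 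The two expressions then coincide modulo $L^{\times}$, which is where the period conjecture \eqref{peri} enters, exactly as in the proof of Proposition \ref{3coses}.

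For the second functional I would use $\log^{++}$, the pairing built from the rank-one quotient $\mathbb V_{{\bf fgh}}^{++}$ in the filtration \eqref{four-step}. On $\kappa_0 = \lambda\bigl(\log_p(P_{\beta\beta})\cdot Q - \log_p(Q_{\beta\beta})\cdot P\bigr)$ this vanishes by a trivial antisymmetric cancellation. On the derived class side, one must establish that $\kappa'(f,g_{\alpha},h_{1/\beta})$ likewise lies in the kernel of projection to $V_{fgh}^{++}$; by \cite[Corollary 7.2]{BSV} the original $\Lambda$-adic class $\kappa(\hf,\hg,\hh)$ lies in the balanced submodule $\mathbb V_{{\bf fgh}}^+$, and one needs to argue that this balanced-filtration property descends through the short exact sequence $\eqref{ses}$ used to construct the derived class on $\mathcal D$. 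This descent, which amounts to controlling what happens to a section of the balanced part after division by the uniformizer $\delta$ and specialization, is in my view the principal technical obstacle. Once achieved, the non-vanishing of the regulator $\Reg_{g_{\alpha}}(V_{gh}) = \log_p(P_{\beta\beta})\log_p(Q_{\beta\alpha})-\log_p(Q_{\beta\beta})\log_p(P_{\beta\alpha})$ guarantees that the pair $(\log^{-+},\log^{++})$ separates the two-dimensional $L$-span of $\{P,Q\}$ inside $H^1_{\fin}(\mathbb Q, V_{fgh})$, and the identity $\kappa'(f,g_{\alpha},h_{1/\beta})=\kappa_0$ in $H^1_{\fin}(\mathbb Q,V_{fgh})$ modulo $L^{\times}$ follows. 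The remaining unconditional inputs -- the elliptic Stark conjecture and \eqref{peri} -- cannot be avoided by this method, so the proof should be read as a clean reduction rather than an outright proof.
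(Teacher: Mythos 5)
The statement you are addressing is a \emph{conjecture} in the paper, not a theorem: the passage preceding Conjecture \ref{final} gives only a heuristic derivation (Theorem \ref{reclaw} together with the conjectural formula \eqref{Lpg} for $\Lp^{g_\alpha}$), and the paper afterwards proves only the reverse implication (Conjecture \ref{final} $\Rightarrow$ \eqref{Lpg}), remarking that the converse requires ``some extra assumptions, including the conjecture about periods of \cite{DR2.5}.'' Your proposal is therefore correctly framed as a conditional reduction rather than a proof, and its overall architecture -- the $\log^{-+}$ constraint from the derived reciprocity law of Theorem \ref{reclaw}, plus a second constraint encoding the position of the derived class inside the filtration \eqref{four-step}, plus a regulator non-vanishing to separate $\{P,Q\}$ -- is exactly the one the paper sketches, following the mechanism of Proposition \ref{3coses}. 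The one deviation is your use of $\log^{++}$ rather than the paper's $\log^{--}$ as the second functional; this is a reasonable and arguably cleaner variant, since $\log^{++}(\kappa_0)$ vanishes by pure antisymmetry, whereas the $\log^{--}$ route would also require you to say why $\log^{--}$ kills the combination of Kummer classes appearing in $\kappa_0$.

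Two caveats. First, your assertion that the period conjecture \eqref{peri} enters the $\log^{-+}$ comparison ``exactly as in the proof of Proposition \ref{3coses}'' looks misplaced: in that proof, \eqref{peri} is the \emph{conclusion} drawn from comparing the two expressions for $\kappa$ obtained from (a) and (b), not an input to the (a)-side computation. Here the periods $\Xi_{g_\alpha}\Omega_{h_{1/\beta}}$ in the denominator of $\kappa_0$ cancel against the periods produced when Kummer classes are paired with $\eta_{g_\alpha}\otimes\omega_{h_{1/\beta}}$, and both sides already carry $\log_p(u_{g_\alpha})$ in the denominator, so the match in the $\log^{-+}$ coordinate appears to need only \eqref{Lpg}. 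If \eqref{peri} truly is indispensable (as the paper's closing remark suggests), you should locate precisely where it is consumed rather than asserting it by analogy. Second, the gap you flag is the genuine one: the balanced-filtration property of the $\Lambda$-adic class $\kappa(\hf,\hg,\hh)$, known from \cite[Corollary 7.2]{BSV}, must be shown to descend through the connecting map $\delta$ of \eqref{ses} to the derived class $\kappa'$. Since \eqref{four-step} is a filtration by $G_{\mathbb Q_p}$-stable $\Lambda_{{\bf fgh}}$-submodules and the relevant $H^0$-terms along $\mathcal D$ vanish for weight reasons (as used in Proposition \ref{deri}), the balanced property should pass through the analogous long exact sequence for $\mathbb V^+_{|\mathcal D}$; but you should spell this out and, in addition, address why $\kappa'$ lands in $H^1_{\fin}(\mathbb Q,V_{fgh})$ at all (crystallinity at $p$, not merely balancedness), since $H^1_{\fin}$ is where the conjectured identity is asserted.
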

As it was pointed out before, in the self dual case the product $\Xi_{g_{\alpha}} \Omega_{h_{1/\beta}}$ is an element of $L^{\times}$.

We finish our work with the following result.
\begin{propo}
Assume that Conjecture \ref{final} is true. Then, the special value $\Lp^{g_{\alpha}}$ satisfies \[ \Lp^{g_{\alpha}}(f,g_{\alpha},h_{\alpha}) = \frac{\log_p(P_{\beta \beta}) \log_p(Q_{\beta \alpha}) - \log_p(Q_{\beta \beta}) \log_p(P_{\beta \alpha})}{\log_p(u_{g_{\alpha}})} \pmod{L^{\times}}. \]
\end{propo}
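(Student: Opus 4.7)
The plan is to combine the derived reciprocity law of Theorem \ref{reclaw} with the conjectural identity of Conjecture \ref{final}, and to match the terms on both sides using the pointwise behaviour of the Bloch--Kato logarithm. A preliminary remark is that in Case (b) the condition $\beta_g \alpha_h = 1$ forces the equality of $p$-stabilisations $h_\alpha = h_{1/\beta}$, so the value $\Lp^{g_\alpha}(f,g_\alpha,h_\alpha)$ appearing in the statement is literally the one computed by the reciprocity law; this is also consistent with the observation, recorded in the introduction, that $\Lp^{g_\alpha}$ does not depend on the choice of $p$-stabilisation at $h$.

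First I would write down Theorem \ref{reclaw} at the point $(x_0,y_0,z_0)$, namely
\[ \langle \log_{\BK}\bigl(\kappa_p'(f,g_\alpha,h_{1/\beta})^g\bigr), \eta_f \otimes \omega_{g_\alpha} \otimes \omega_{h_{1/\beta}} \rangle = \mathcal L \cdot \Lp^{g_\alpha}(f,g_\alpha,h_{1/\beta}) \pmod{L^\times}, \]
and then substitute into the left hand side the conjectural expression for $\kappa'(f,g_\alpha,h_{1/\beta})$ provided by Conjecture \ref{final}. By linearity, the problem is reduced to evaluating the same pairing on the global classes in $H_{\fin}^1(\mathbb Q, V_{fgh})$ arising from the Kummer images of the two points $P$ and $Q$ in $(E(H) \otimes V_{gh}^\vee)^{G_\mathbb Q}$.

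The core of the argument is therefore a pointwise computation. Writing $P = \sum_{\lambda,\mu} P_{\lambda\mu} \otimes e_{\lambda\mu}^\vee$ in the eigenbasis of Section \ref{conj-S2}, and using the defining relations $\eta_{g_\alpha} = \Xi_{g_\alpha} \otimes e_g^\beta$ and $\omega_{h_{1/\beta}} = \Omega_{h_{1/\beta}} \otimes e_h^\alpha$ recalled there, only a single Frobenius eigenline of $V_{gh}$ survives after the $g$-projection followed by the pairing with $\eta_f \otimes \omega_{g_\alpha} \otimes \omega_{h_{1/\beta}}$. Unwinding the Bloch--Kato logarithm of a Kummer class should yield the expected identity
\[ \langle \log_{\BK}(P^g), \eta_f \otimes \omega_{g_\alpha} \otimes \omega_{h_{1/\beta}} \rangle = \Xi_{g_\alpha} \cdot \Omega_{h_{1/\beta}} \cdot \log_p(P_{\beta\alpha}) \pmod{L^\times}, \]
together with the analogous formula for $Q$. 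This mirrors the period computation performed in the proof of Proposition \ref{3coses} and closely follows the pattern in \cite[Section 4]{RiRo2}.

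Substituting these two evaluations back into the reciprocity law, the common factors $\mathcal L$ and $\Xi_{g_\alpha} \Omega_{h_{1/\beta}}$ cancel from both sides, leaving precisely
\[ \Lp^{g_\alpha}(f,g_\alpha,h_\alpha) = \frac{\log_p(P_{\beta\beta}) \log_p(Q_{\beta\alpha}) - \log_p(Q_{\beta\beta}) \log_p(P_{\beta\alpha})}{\log_p(u_{g_\alpha})} \pmod{L^\times}, \]
as asserted. The hard part will be the third step: determining exactly which Frobenius eigenline of $V_{gh}$ survives the $g$-projection and the pairing with $\eta_f \otimes \omega_{g_\alpha} \otimes \omega_{h_{1/\beta}}$, and verifying that the resulting period factor is exactly $\Xi_{g_\alpha} \Omega_{h_{1/\beta}}$ rather than some other combination of the $\Omega$'s and $\Xi$'s attached to the various stabilisations. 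Once this bookkeeping is in place, the remainder of the argument is pure linearity and cancellation of common factors.
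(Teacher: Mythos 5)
Your approach is exactly the one taken in the paper: apply $\log^{-+}$ to both sides of the conjectural formula for $\kappa'(f,g_\alpha,h_{1/\beta})$ and invoke the derived reciprocity law of Theorem \ref{reclaw}, with the Frobenius-eigenline bookkeeping $\log^{-+}(P)=\Xi_{g_\alpha}\Omega_{h_{1/\beta}}\log_p(P_{\beta\alpha})$ supplying the required cancellation. You have simply spelled out the period computation that the paper leaves implicit, and your reading of which eigenline survives the pairing (the $\beta\alpha$ one, via $\eta_{g_\alpha}\in H_p\otimes V_g^\beta$ and $\omega_{h_{1/\beta}}\in H_p\otimes V_h^\alpha$) is correct.
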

\begin{proof}
This follows by applying the Bloch--Kato logarithm $\log^{-+}$ to the cohomology class $\kappa'(f,g_{\alpha},h_{1/\beta})$, and using the derived reciprocity law of Theorem \ref{reclaw}.
\end{proof}

The converse can also be established with some extra assumptions, including the conjecture about periods of \cite{DR2.5}.

\end{document}